\documentclass[letterpaper,11pt,reqno]{amsart} 
\usepackage[portrait,margin=1in]{geometry}
\usepackage{systeme,mathrsfs,xfrac} 
\usepackage[colorlinks=true,linkcolor=blue,citecolor=blue,urlcolor=blue,pdfencoding=auto, psdextra]{hyperref} 
\usepackage{amsmath,amssymb,amsthm,amsfonts,amsbsy,latexsym,dsfont,color} 
\usepackage[foot]{amsaddr}
\usepackage{comment}
\usepackage{tikz}
\usetikzlibrary{positioning,arrows.meta,calc,bending}

\usepackage[textsize=tiny]{todonotes}
\usepackage{regexpatch}
\makeatletter
\xpatchcmd{\@todo}{\setkeys{todonotes}{#1}}{\setkeys{todonotes}{inline,#1}}{}{}
\makeatother

\usepackage{enumerate}
\newenvironment{enumeratei}{\begin{enumerate}[\upshape i.]}{\end{enumerate}}
\newenvironment{enumeratea}{\begin{enumerate}[\upshape a)]}{\end{enumerate}}
\newenvironment{enumeraten}{\begin{enumerate}[\upshape 1.]}{\end{enumerate}}

\newtheorem{thm}{Theorem}[section]
\newtheorem{lem}[thm]{Lemma}
\newtheorem{cor}[thm]{Corollary}
\newtheorem{prop}[thm]{Proposition}
\newtheorem{defn}[thm]{Definition}
\newtheorem{rem}[thm]{Remark}

\renewcommand{\le}{\leqslant} 
\renewcommand{\ge}{\geqslant} 
\renewcommand{\leq}{\leqslant} 
 
\newcommand{\eset}{\varnothing}

\newcommand{\ind}{\mathds{1}}
\newcommand{\eps}{\varepsilon}

\newcommand{\norm}[1]{\left\Vert#1\right\Vert}
\newcommand{\abs}[1]{\left\vert#1\right\vert}

\newcommand{\ie}{\emph{i.e.,}}

\let\ga=\alpha  \let\gc=\gamma \let\gd=\delta 
    \let\gk=\kappa \let\gl=\lambda        \let\go=\omega   \let\gs=\sigma

\newcommand{\cC}{\mathcal{C}}
\newcommand{\cE}{\mathcal{E}}

\newcommand{\cL}{\mathcal{L}}

\newcommand{\cP}{\mathcal{P}}

\newcommand{\cW}{\mathcal{W}}
  
\newcommand{\vB}{\mathbf{B}}

\newcommand{\mvzero}{\boldsymbol{0}}

\newcommand{\mvJ}{\boldsymbol{J}}\newcommand{\mvL}{\boldsymbol{L}}

\newcommand{\mvS}{\boldsymbol{S}}

\newcommand{\mvp}{\boldsymbol{p}}

\newcommand{\mvgo}{\boldsymbol{\omega}}

\newcommand{\mvmu}{\boldsymbol{\mu}}

\newcommand{\fL}{\mathfrak{L}}

\newcommand{\fR}{\mathfrak{R}}

\newcommand{\fg}{\mathfrak{g}}

\newcommand{\fs}{\mathfrak{s}}

\newcommand{\dN}{\mathds{N}}

\newcommand{\dR}{\mathds{R}}

\newcommand{\dZ}{\mathds{Z}} 
\DeclareMathOperator{\E}{\mathds{E}}
\DeclareMathOperator{\pr}{\mathds{P}}

\DeclareMathOperator{\var}{Var}
\DeclareMathOperator{\cov}{Cov}

\DeclareMathOperator{\N}{N}
\newcommand{\lc}{\ell_{\mathsf{c}}}
\newcommand{\sfH}{\mathsf{H}}
\newcommand{\sfP}{\mathsf{P}}
\newcommand{\sfT}{\mathsf{T}}
\newcommand{\sfR}{\mathsf{R}}
\newcommand{\sfJ}{\mathsf{J}}
\newcommand{\sfRT}{\mathsf{R}^{\mathsf{T}}}
\newcommand{\sfRH}{\mathsf{R}^{\mathsf{H}}}
\newcommand{\sfN}{\mathsf{N}}
\DeclareMathOperator{\Env}{\mathrm{Env}}
\newcommand{\supp}{\operatorname{supp}}

\begin{document}
\title[FPP on Spread-out line graph]{First-Passage Percolation on Spread-out line graphs: Microscopic Regime}
\author[Dey]{Partha S.~Dey$^\star$}
\author[Kim]{Daecheol Kim$^\dagger$}
\address{Department of Mathematics, University of Illinois Urbana--Champaign, 1409 W Green Street, Urbana, Illinois 61801}
\email{$^\star$psdey@illinois.edu, $^\dagger$dk43@illinois.edu}
\date{\today}
\subjclass[2020]{Primary: 60K35; Secondary: 60F05, 60F17.}
\keywords{first-passage percolation; central limit theorem; stable laws; spread-out lattice.}

%%%%%%%%%%%%%%%%%%%%%%%%%%%%%%%%%%%%%%%%%%%%%%%%%%%%%%%%%%%%%%%%%%%%%%%%%
\begin{abstract}
    We study first-passage percolation on the $\ell$-spread-out line graph, where each vertex $i\in\{0,1,\dots,n\}$ is connected to all others at distance at most $\ell$. Here, we focus on the {\it microscopic} regime, with $\ell$ fixed as $n\to\infty$. Independent nonnegative weights are assigned to these edges.  We obtain a law of large numbers and precise fluctuation results for the passage time $T_n$ from $0$ to $n$.  If the weight distribution has finite variance or a heavy tail with exponent above $2/\lc$ where $\lc:=\ell(\ell+1)/2$, then $T_n$ satisfies a Gaussian CLT with $\sqrt{n}$ scaling.  In contrast, for heavier-tailed distributions, with index below the threshold, we show that $T_n$, appropriately centered and scaled, converges to a non-Gaussian stable law.  We also prove an LLN and CLT for the number of edges in the minimizing path.  The key tool is a \emph{pivot-node} decomposition; the geodesic can be segmented into i.i.d.~blocks, leading to a renewal structure. Our results extend the classical one-dimensional CLT to include finite-range connectivity and heavy tails, revealing a new distributional phase transition in the fluctuations of $T_n$. 
\end{abstract}

%%%%%%%%%%%%%%%%%%%%%%%%%%%%%%%%%%%%%%%%%%%%%%%%%%%%%%%%%%%%%%%%%%%%%%%%%
\maketitle
\setcounter{tocdepth}{1}\tableofcontents

%%%%%%%%%%%%%%%%%%%%%%%%%%%%%%%%%%%%%%%%%%%%%%%%%%%%%%%%%%%%%%%%%%%%%%%%%
\section{Introduction}\label{sec:intro}

First-passage percolation or FPP is a fundamental probabilistic model for studying the spread of information, transport phenomena, and growth processes in random environments. Originally introduced by Hammersley and Welsh~\cite{hamwelsh65} in the '60s to model fluid flow through porous media, it has since become a central object in probability theory, random geometry, statistical physics, and network science. The model provides a mathematical framework for understanding how randomness in an underlying medium influences the speed, structure, and fluctuations of propagation processes.

In its classical form, FPP assigns independent non-negative random weights to the edges of a graph, where each weight represents the cost, time, or resistance associated with traversing that edge. The first-passage time between two vertices is defined as the minimum total weight among all connecting paths and describes the fastest possible route through the random environment. Beyond its mathematical significance, FPP has found applications across diverse disciplines, including the study of transport in disordered materials, information dissemination in communication networks, epidemic spread, financial markets, and biological systems. We refer to the survey~\cite{adh17} for more details.

A central objective in FPP is to characterize the asymptotic behavior of passage times as the underlying graph becomes large. Classical questions concern the growth rate of passage times, the nature of their fluctuations, and the emergence of phase transitions driven by changes in the geometry of the network or the distribution of edge weights. Understanding how network structure influences propagation speed remains a fundamental challenge. 
Our work investigates this question in a setting where additional resources permit information to travel over longer distances at equivalent costs. Specifically, we seek to understand: at what spatial scale do long-range connections substantially alter propagation speed, and how does the qualitative behavior of optimal transmission paths change across this transition?

The classical lattice setting imposes a fixed local geometry. Many spatial networks arising in transportation, infrastructure, communication, and biological systems are local at short scales but not purely nearest-neighbor; distance and wiring costs constrain the network, while additional local or longer-range connections may substantially change global transport properties~\cite{barthelemy11}. This trade-off is also central in small-world models, where a locally clustered graph is perturbed by non-local links and the resulting network can retain local structure while dramatically reducing typical graph distances and changing spreading behavior~\cite{wattsstrogatz98,newmanwatts99,kleinberg00}. At the level of growing real-world networks, densification phenomena in social, information, and urban networks show that the effective number of contacts or links may increase with the system size~\cite{leskovec07,schlapfer14,shutters18}. These examples, together with the multiple growth regimes known for the long-range FPP with distance-dependent passage times~\cite{chatterjeeDey2016LRFPP}, motivate spatial graph families in which the connection range is a tunable parameter rather than a fixed background feature.

The spread-out graph is a deterministic and analytically tractable model of this idea. In particular, we consider a family of spread-out graphs obtained from a locally finite base graph $G$. Given an integer $\ell \ge 2$, we construct the $\ell$-spread-out graph $G^{(\ell)}$ by connecting every pair of vertices whose graph distance is at most $\ell$. The parameter $\ell$ separates the spatial geometry from the weight distribution: when $\ell$ is fixed, the graph geometry dominates at large scales but has several local bypasses through each cut; when $\ell$ grows with $n$, the number of available local routes increases and the model moves toward a highly connected, mean-field-like regime, when $\ell$ is comparable to the graph diameter.

We study first-passage percolation on $G^{(\ell)}$ with independent and identically distributed non-negative edge weights. Let 
\[
T^{(\ell)}(u,v):=\text{the first-passage time from vertex $u$ to vertex $v$},
\]
representing the minimum time required for information to propagate across the network from vertex $u$ to vertex $v$. Our goal is to determine how the asymptotic, fluctuation, and distributional properties of $T^{(\ell)}(u,v)$ evolve when the graph distance between $u$ and $v$ increases to infinity and the connectivity parameter $\ell$ varies. Here, we focus on three interrelated questions, namely, (a)~Propagation Speed, (b)~Distributional Phase Transitions, and (c)~Optimal Path Geometry.
%\begin{enumeratei}
%\item \textbf{Propagation Speed}.  How does the expected first-passage time scale with the graph distance, and how does this scaling depend on $\ell$?
%\item \textbf{Distributional Phase Transitions}. Classical central limit behavior is expected when $\ell$ is small relative to $n$, while qualitatively different, non-Gaussian fluctuations may emerge when $\ell$ is of order $n$. We aim to identify and characterize the transition between these regimes.
%\item \textbf{Optimal Path Geometry}. For small values of $\ell$, efficient paths are expected to follow a predominantly forward-moving strategy. As $\ell$ increases, however, optimal routes may exhibit increasingly complex local optimization patterns. We seek to understand how the geometry of geodesics changes across connectivity scales.
%\end{enumeratei}

In this article, we mainly focus on the case when $\ell\ge 2$ is fixed, and the base graph $G$ is the infinite line graph $\dZ$. We aim to study the scaling and distributional behavior of $T^{(\ell)}(0,n)$ as $n\to\infty$.
In general, one can consider three regimes for $\ell$, namely 
\begin{enumeratei}
	\item $\ell$ is a fixed integer,
	\item $1 \ll \ell \ll n,$
	\item $\ell = \gl  n \cdot (1+o(1))$, where $\gl>0$ is fixed.  
\end{enumeratei}

We refer to these regimes as the \textbf{Microscopic}, \textbf{Mesoscopic}, and \textbf{Macroscopic} regimes, respectively. 
This article treats the microscopic regime, where $\ell$ is fixed as $n\to\infty$. The mesoscopic and macroscopic regimes are treated in the companion paper~\cite{DK26b}, where the mean-field structure plays a nontrivial role. However, analysis of the microscopic regime in this article is self-contained and can be read independently of the companion paper~\cite{DK26b}.

The microscopic regime is important for two reasons.  First, it is the finite-range spatial baseline for the broader transition from local one-dimensional geometry to highly connected geometry. Second, even for fixed $\ell$, the model is not merely a trivial finite-range perturbation of nearest-neighbor one-dimensional FPP. The local spread-out geometry creates finitely many routes, and this structure changes the moment thresholds of passage times. As a result, the microscopic model exhibits a sharp dichotomy between Gaussian and non-Gaussian stable fluctuations, governed jointly by the local geometry and the tail of the edge-weight distribution.
%%%%%%%%%%%%%%%%%%%%%%%%%%%%%%%%%%%%%%%%%%%%%%%%%%%%%%%%%%%%%%%%%%%%%%%%%%%%%%%%%
\subsection{Main results}\label{ssec:results}

Consider the one-dimensional integer lattice $\dZ$ with edge set  $ \cE$. Given a positive integer $\ell$, the $\ell$-spread-out version (also known as $\ell$-th power) of $\dZ$ is defined by $\dZ^{(\ell)} = (\dZ, \cE^{(\ell)})$, where $ \cE^{(\ell)} $ is the set of edges such that
$
	\cE^{(\ell)} := \{ (x,y): x,y\in\dZ, 1\le y-x \leq \ell \}.
$
Let $\go_{x,y}=\go_{y,x}$ denote non-negative, i.i.d.~random weights defined for $(x,y)\in \cE^{(\ell)}.$ 
The \textit{first passage time} between vertices $0$ and $n$ is defined as
\begin{align}
    T^{(\ell)}(0,n)=\text{the minimal total weight of a path from $0$ to $n$}.
\end{align}

The microscopic regime corresponds to the case where $\ell$ is finite and independent of $n$. In this setting, the graph has a fixed local structure, as each vertex is connected to at most $\ell$ neighboring vertices on either side. The first-passage time $T^{(\ell)}(0,n)$ is heavily influenced by the local behavior of weights and paths.
An important quantity in this regime is $\lc$, defined by
\begin{align}\label{def:ell_c}
	\lc := \frac12 \ell(\ell+1).
\end{align}
The constant $\lc$ represents the number of edges crossing a unit-length edge $(u,u+1)$ in the $\ell$-spread-out line graph. It is also the maximum number of edge-disjoint paths between the two endpoint windows (see Lemma~\ref{lem:path_structure}). It plays a crucial role in determining the thresholds for the $k$-th moment of the renewal block first passage time between two appropriately defined pivot vertices. See Proposition~\ref{prop:k-moment}.

We assume that $\go$ satisfies one of the following assumptions 
\begin{enumerate}[{A}.1.]
    \item~\label{ass:A1} $\go$ has finite second moment, \ie\ $\E\go^2<\infty$,
    \item~\label{ass:A2} $\go$
    follows a heavy-tailed distribution given by
\begin{align}\label{eq:ass_go_micro}
	\pr(\go>x) = x^{-\gc}L(x) \text{ as } x\to\infty,
\end{align}
for some $\gc\in(0,2)$ and a slowly varying function $L(\cdot)$.
\end{enumerate} 
For simplicity, we define $\gc=2$ when $\go$ has a finite second moment. We now present the main results for the microscopic regime. 

\begin{thm}[Microscopic fluctuations]\label{thm:micro}
Assume that $\ell\ge2$ is fixed and $\go$ is not deterministic.
\begin{enumeratei}
	\item\label{thm:mean} If $\E\go<\infty$ or $\go$ satisfies condition~A.\ref{ass:A2} with $\gc>1/{\lc}$, then the weak law of large numbers holds, \ie\
\begin{align*}
    \frac{1}{n}T^{(\ell)}(0,n)
    \stackrel{\pr}{\to}
    \mu_{\mathrm{FPP}} \text{ as } n\to\infty
\end{align*}
for some constant $\mu_{\mathrm{FPP}}\in (0,\infty).$

\item\label{thm:clt} If $\E\go^2<\infty$ or $\go$ satisfies condition~A.\ref{ass:A2} with $\gc>2/{\lc}$, then the following Gaussian central limit theorem holds
\begin{align*}
    n^{-1/2}\cdot (T^{(\ell)}(0,n)-n\cdot \mu_{\mathrm{FPP}})
    \Longrightarrow
    \N(0,\gs_{\mathrm{FPP}}^2),
\end{align*}
where $\mu_{\mathrm{FPP}}$ is as in~\eqref{thm:mean}, and
$\gs_{\mathrm{FPP}}^2$ is a positive constant given in~\eqref{eq:var-CLT-def}.

\item\label{thm:sl} Finally, if condition~A.\ref{ass:A2} holds with $0<\gc<2/{\lc}$, then
\begin{align*}
    \frac1{a_n}\cdot (T^{(\ell)}(0,n)-b_n)
    \Longrightarrow
    Z_\alpha,
\end{align*}
where $\alpha=\gc\lc$ and $Z_\alpha$ is the totally right-skewed $\alpha$-stable law associated with the normalization $a_n=n^{1/(\gc \lc) +o(1)},b_n$ explicitly given in~\eqref{eq:stable-law-const}. 
\end{enumeratei}
\end{thm}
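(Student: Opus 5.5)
The plan is to reduce $T^{(\ell)}(0,n)$ to a renewal–reward sum via the pivot-node decomposition mentioned in the abstract. I would call a vertex $v$ a \emph{pivot} if the configuration of weights on edges crossing $v$ forces every geodesic between far-left and far-right points to pass through $v$. For example, take an event where all edges of a fixed local pattern around $v$ have small weight (below some threshold $\delta$) and all long edges "jumping over" $v$ have weight above some large $K$. Such events have positive probability because $\go$ is non-deterministic, and they depend only on edges in a bounded window.

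\textbf{Step 1: renewal structure.} Choose the event so that, once it occurs, $T$ splits additively: $T(0,n)=T(0,v_1)+T(v_1,v_2)+\dots$. Checking disjoint windows at spacing $2\ell$ gives a stationary, finitely dependent sequence of pivot indicators. The gaps between successive pivots $\tau_1<\tau_2<\cdots$ therefore have exponential tails. By construction, the pairs (gap $\tau_{i+1}-\tau_i$, block passage time $X_i:=T(\tau_i,\tau_{i+1})$) are i.i.d. for $i\ge1$. Boundary blocks near $0$ and $n$ are $O_{\pr}(1)$ in the appropriate scale.

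\textbf{Step 2: moments of block times.} Invoke Proposition~\ref{prop:k-moment}: $\E X_1^k<\infty$ iff $k<\gc\lc$ (with $\gc=2$ under A.1). Moreover, $\pr(X_1>x)$ is regularly varying with index $-\gc\lc$. The heuristic is as follows. By Lemma~\ref{lem:path_structure}, there are $\lc$ edge-disjoint paths across a cut, so $X_1$ is large only when $\lc$ roughly independent weights are simultaneously large. Since $\pr(\min(\go_1,\ldots,\go_{\lc})>x)=x^{-\gc\lc}L(x)^{\lc}$, the tail of $X_1$ is comparable to this up to slowly varying factors.

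\textbf{Step 3: limit theorems.} Let $N_n$ be the number of completed blocks before $n$. Then $T(0,n)=\sum_{i\le N_n}X_i+O_{\pr}(1)$.
\begin{enumeratei}
\item For part~\eqref{thm:mean}, $\E X_1<\infty$ when $\gc\lc>1$, so the renewal reward theorem gives $\mu_{\mathrm{FPP}}=\E X_1/\E(\tau_2-\tau_1)$.
\item For part~\eqref{thm:clt}, $\E X_1^2<\infty$ when $\gc\lc>2$. The CLT for renewal–reward processes (Anscombe's theorem) applied to $X_i-\mu_{\mathrm{FPP}}(\tau_{i+1}-\tau_i)$ gives $\gs_{\mathrm{FPP}}^2=\var(X_1-\mu_{\mathrm{FPP}}\tau)/\E\tau$. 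This is positive because $\go$ is non-deterministic, so $X_1$ is not an affine function of the gap.
\item For part~\eqref{thm:sl}, the centered summands lie in the domain of attraction of a totally right-skewed $\alpha$-stable law with $\alpha=\gc\lc<2$, since their left tail is light. Anscombe-type random-index arguments then transfer stable convergence to $\sum_{i\le N_n}$. The gap fluctuations are $O(\sqrt n)$, which is $o(a_n)$ because $a_n=n^{1/\alpha+o(1)}$ with $1/\alpha>1/2$. This yields $b_n$ and $a_n$ from the tail of $X_1$.
\end{enumeratei}

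\textbf{Main obstacle.} The hard step is Step 2, specifically the sharp regular variation of $\pr(X_1>x)$. The upper bound needs a union over the finitely many (given the gap) edge-disjoint path families, together with control of the random block length. The lower bound needs an event where $\lc$ crossing edges are all large while the rest stay typical. One must also make sure the pivot event itself does not distort the tail: conditioning on small weights near pivots is harmless, and large weights on jumping edges are what cause the heavy tail. I would first prove two-sided bounds $x^{-\gc\lc}L(x)^{\lc}\lesssim\pr(X_1>x)\lesssim x^{-\gc\lc}L(x)^{\lc}$ within constants, which already suffices for the moment thresholds. I would then upgrade to exact regular variation by a subexponential-type argument on the minimum over the $\lc$ cut edges.
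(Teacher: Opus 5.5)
Your overall architecture (pivot regeneration, renewal--reward LLN/CLT, moment thresholds from the $\lc$ edge-disjoint routes, a stable limit for block sums plus a random-index change) matches the paper's. However, Step~1 fails as stated.

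\textbf{Gap 1: existence of pivots.} A single-vertex event with light incident edges and heavy edges jumping over $v$ forces geodesics through $v$ only if $\go_{x,y}>\go_{x,v}+\go_{v,y}$. That requires $\pr(\go<\delta)>0$ and $\pr(\go>2\delta)>0$ for some $\delta$, and non-degeneracy of $\go$ does not give this. If $\go$ is supported in $[c,2c]$ (e.g.\ uniform on $[1,2]$, a law covered by parts~(i)--(ii)), every edge over $v$ weighs at most $2c\le\go_{x,v}+\go_{v,y}$. So your event never forces anything, and no positive-probability version of it exists. The paper separates Conditions~B.\ref{ass:B1} and B.\ref{ass:B2} (Lemma~\ref{lem:condB}). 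In the second case it uses a chain $x,x+\ell,\dots,x+(k-1)\ell$ whose incident edges are below $1+1/(k+1)$ and whose jumping edges are above $1+2/k$. No edge of length $\le\ell$ jumps two chain vertices, so a bypass needs $k$ heavy edges, costing more than $k+2$. The $k+1$ light edges through the chain cost less than $k+2$ (Lemmas~\ref{lem:B2-kpivot-count} and~\ref{lem:B2-kpivot-forcing}). Without such a construction, your renewal structure, and hence parts~(i)--(ii), is unproved for these laws.

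\textbf{Gap 2: exact tail for part~(iii).} Two-sided bounds $\pr(X_1>x)\asymp\bar F(x)^{\lc}$ are not enough: the stable limit with the normalisation in \eqref{eq:stable-law-const} needs $\pr(X_1>x)$ to be regularly varying. The path-family argument you propose for the upper bound loses factors depending on the block length, so it cannot produce a limiting ratio. Your ``subexponential argument on the minimum over the $\lc$ cut edges'' does not say which cut, while a block contains a random number of them. The missing idea is min-cut rigidity (Lemma~\ref{lem:cut-geometry}): every set of at most $\lc$ edges separating the end windows of a block is exactly one unit cut $\cC_u$. Consequently, on $\{X_1>x\}$ with block length at most $B_0\log x$, one of two things happens:
\begin{enumerate}
\item[(a)] some unit cut has all $\lc$ edges above $(1-\eps)x$; or
\item[(b)] at least $\lc+1$ block edges exceed $\eps x/(4(B_0\log x+2))$, an event of probability $(\log x)^{O(1)}\bar F(x)^{\lc+1}=o(\bar F(x)^{\lc})$.
\end{enumerate}
Counting all-heavy unit cuts per block via stationarity (Lemma~\ref{lem:ergodic}) gives mean exactly $\mu_{\sfP}\bar F(x)^{\lc}$. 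A second-factorial-moment bound then gives the matching lower bound, so $\pr(X_1>x)\sim\mu_{\sfP}\bar F(x)^{\lc}$ (Lemma~\ref{lem:T1-tail}).

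\textbf{Gap 3: positivity of the variance.} $\gs^2_{\mathrm{FPP}}>0$ is asserted rather than proved; ``$\go$ non-deterministic'' alone does not exclude $X_1=\mu_{\mathrm{FPP}}(\tau_2-\tau_1)$ a.s. The paper rules this out via Ahlberg's CLT for one-dimensional periodic graphs under finite variance. Under A.\ref{ass:A2}, it compares the heavy tail of $X_1$ with the exponential tail of the gap.
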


We also prove the Law of Large Numbers and Central Limit Theorem for the number of hop counts in the optimal path. A general result for additive functionals of the geodesic path is given in Corollary~\ref{cor:geo-stat}.

\begin{thm}[Microscopic hop-count LLN and CLT]\label{thm:micro-hop-clt}
Assume that $\ell\ge2$ is fixed and $\go$ is not deterministic. For integers $a<b$, let $H^{(\ell)}(a,b)$ denote the minimum number of edges among all geodesics from $a$ to $b$ in the $\ell$-spread-out line graph. Then
\begin{align}\label{eq:b2-hop-lln}
    \frac1n\cdot {H^{(\ell)}(0,n)}\to \mu_{\mathrm{Hop}}
    \text{ a.s.~and }
    \frac1n\cdot {\E H^{(\ell)}(0,n)}\to \mu_{\mathrm{Hop}}
\end{align}
for some constant $\mu_{\mathrm{Hop}}\in(0,\infty)$.
Moreover,
\begin{align}\label{eq:b2-hop-clt}
    n^{-1/2}\cdot (H^{(\ell)}(0,n)-n\cdot \mu_{\mathrm{Hop}})
    \Longrightarrow 
    \N(0,\gs^2_{\mathrm{Hop}}),
\end{align}
where $\gs^2_{\mathrm{Hop}}$ is a positive constant given in~\eqref{eq:hop-var-def}.
\end{thm}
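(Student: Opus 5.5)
The plan is to use the pivot-node renewal structure from the abstract and treat $H^{(\ell)}$ as an additive functional of the geodesic, along the lines of Corollary~\ref{cor:geo-stat}.

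\textbf{Pivot nodes.} Call a vertex $v$ with $0<v<n$ a \emph{pivot} if every geodesic from $0$ to $n$ passes through $v$, and if this holds for a local reason. Concretely, the configuration of weights on edges in a window $[v-\ell,v+\ell]$ should force it. One natural choice is that all edges crossing the window have weights exceeding a level $K$, except the nearest-neighbour edges $(v-1,v)$ and $(v,v+1)$, whose weights lie below $\eps$. Then any geodesic must step through $v$, because any bypass of $v$ uses one of the $\lc$ edges crossing $(v-1,v)$ or $(v,v+1)$. This argument needs $\go$ to be non-degenerate; in general I would choose the two levels as two distinct points in the support of $\go$ so that bypasses are strictly more costly.

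The set of pivots $0<\tau_1<\tau_2<\cdots$ is then a stationary sequence. The local events at distinct sites are $(2\ell)$-dependent. Moreover, the geodesic restricted to $[\tau_i,\tau_{i+1}]$ is a geodesic between its endpoints that uses only edges inside the block. I would first invoke Lemma~\ref{lem:path_structure} to confirm that geodesics between pivots stay inside the block. Next, to get exactly i.i.d.\ blocks rather than merely a stationary sequence, I would define pivots via a regeneration construction: scan left to right, and declare a pivot when a fresh window of $2\ell$ unexplored edges has the good configuration. The geodesic segment after a pivot then depends only on future edges, which makes the block pairs $(\tau_{i+1}-\tau_i,\,h_i)$ i.i.d., where $h_i$ is the minimal hop count in block $i$.

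\textbf{Tail bounds.} The inter-pivot gap $\tau_{i+1}-\tau_i$ has an exponential tail, since each window succeeds independently with probability $p>0$. Choosing among geodesics the one with the fewest edges is consistent across blocks: minimal hop count is additive over pivots, because every geodesic passes through each pivot. Also $h_i\le \tau_{i+1}-\tau_i$, since a geodesic is self-avoiding and moves within the block with no repeated vertices. Hence all moments of $h_i$ are finite. Notably, this holds regardless of the tail exponent $\gc$, which is why the hop-count theorem needs no moment condition.

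\textbf{Renewal CLT.} Write
\[
H^{(\ell)}(0,n)=h_{\text{init}}+\sum_{i\le N_n}h_i+h_{\text{final}},
\]
where $N_n$ is the number of pivots in $[0,n]$. The boundary pieces are bounded by the first gap and the last straddling gap, so they are $O_\pr(1)$ and in fact tight in $L^1$. The renewal-reward theorem then gives almost-sure convergence to
\[
\mu_{\mathrm{Hop}}=\frac{\E h_1}{\E(\tau_2-\tau_1)},
\]
and convergence of expectations follows by uniform integrability, since $H\le n$. For the CLT I would apply Anscombe's theorem / the renewal-reward CLT to the centered variables $h_i-\mu_{\mathrm{Hop}}(\tau_{i+1}-\tau_i)$, which gives
\[
\gs^2_{\mathrm{Hop}}=\frac{\var\bigl(h_1-\mu_{\mathrm{Hop}}(\tau_2-\tau_1)\bigr)}{\E(\tau_2-\tau_1)}.
\]

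\textbf{Main obstacles.} I expect two steps to be hardest.

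The first is making the pivot decomposition genuinely i.i.d.\ while guaranteeing that every geodesic crosses the pivot. Degenerate ties in the weights are the delicate point, and taking the hop-minimal geodesic handles ties consistently.

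The second is positivity of $\gs^2_{\mathrm{Hop}}$. For this I would exhibit two block configurations with the same length $L$ but different hop counts, each with positive probability. Take a short block between two pivots, of length $2$, say. In one configuration the direct edge $(v,v+2)$ is much cheaper than the two unit edges; in the other it is much more expensive. These give hop counts $1$ and $2$, which is possible since $\go$ is non-deterministic. Then $h_1-\mu_{\mathrm{Hop}}(\tau_2-\tau_1)$ is not almost surely constant, so the variance is positive.
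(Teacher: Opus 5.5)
Your architecture matches the paper's: local pivots, i.i.d.\ blocks with $\sfH_i\le\sfP_i$, the renewal--reward LLN/CLT, residual control, and positivity from two block types. However, two load-bearing steps fail as written.

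\textbf{The pivot event does not force geodesics, and a one-vertex pivot cannot exist in general.}

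First, making only $(v-1,v)$ and $(v,v+1)$ light is not enough. A crossing edge $(x,y)$ with $x<v-1$ has no cheap local replacement through $v$. The edges a through-route needs, namely $(x,v)$ or the edges entering $v-1$, are themselves heavy or unconstrained, and ``heavy'' is only a lower bound. For example, take $\ell=3$ and suppose the edges entering $v-1$ from the left and the edges $(v-3,v),(v-2,v)$ are all much heavier than $\go_{v-2,v+1}$; this is consistent with your event. Then a route using $(v-2,v+1)$ beats every route through $v$. The paper's Definition~\ref{defn:B1-pivot} instead makes \emph{all} edges from $v$ of length at most $\ell-1$ lighter than $a$, and every edge over $v$ heavier than $2a$. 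Then each crossing edge $(x,y)$ can be replaced by $(x,v),(v,y)$ at cost $<2a$ (Lemma~\ref{lem:pivot}).

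Second, and more fundamentally, non-degeneracy of $\go$ does not suffice for any one-vertex pivot of this kind. Suppose $\pr(\go\in[c,2c])=1$ (Condition~B.\ref{ass:B2}, e.g.\ $\go$ uniform on $[1,2]$). Then $\go_{v-1,v+1}\le 2c\le\go_{v-1,v}+\go_{v,v+1}$, so no choice of two levels in the support makes the shortcut strictly costlier. Since the shortcut also has fewer edges, the hop-minimal geodesic prefers it. This is why the paper splits into Conditions~B.\ref{ass:B1} and~B.\ref{ass:B2} (Lemma~\ref{lem:condB}). In the second case it uses the generalized pivot of Definition~\ref{defn:B2-kpivot}: a chain $x,x+\ell,\dots,x+(k-1)\ell$ whose incident edges are lighter than $1+1/(k+1)$ and whose crossing edges are heavier than $1+2/k$. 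By Lemma~\ref{lem:B2-kpivot-count}, an avoiding path pays at least $k$ heavy edges against $k+1$ light ones (Lemma~\ref{lem:B2-kpivot-forcing}). Without this idea, your decomposition does not cover all the non-deterministic laws in the theorem.

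\textbf{The positivity argument fails for a related reason.} Under your own pivot event, $(v,v+2)$ is not one of the two exempted nearest-neighbour edges, so it must be heavy. Hence a length-$2$ block with a cheap direct edge has probability zero. Under Condition~B.\ref{ass:B2} the opposite configuration is impossible, because $\go_{v,v+2}\le 2c\le \go_{v,v+1}+\go_{v+1,v+2}$. Non-determinism alone does not make both configurations compatible with the block constraints.

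The paper (Proposition~\ref{prop:pos-var}(ii)) instead compares blocks of \emph{different} lengths whose hop counts are forced:
\begin{itemize}
\item Under B.\ref{ass:B1}, it gets $(\sfP_1,\sfH_1)=(\ell-1,1)$ and $(\ell,1)$ with positive probability. It puts every edge of $\cE[0,b]$ in a set $J\subset[0,a)$ that is either $\{0\}$ or contained in some $[s,t]$ with $t<2s$. Then a single edge beats every multi-edge path, and no interior pivot can occur.
\item Under B.\ref{ass:B2}, it gets $(\ell,1)$ and $(k\ell-1,k)$.
\end{itemize}
Since the ratios $\sfH_1/\sfP_1$ differ, $\sfH_1-c\,\sfP_1$ is not a.s.\ constant for any $c$.

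Two smaller points:
\begin{itemize}
\item $H\le n$ fails on the infinite line, since the hop-minimal geodesic may leave $[0,n]$ (e.g.\ $0\to-1\to1$). Uniform integrability and the initial-residual bound therefore need pivots on both sides of the origin, as in $H(0,\rho_1)\le\rho_1-\rho_{-1}$ from Lemma~\ref{lem:end-residual}.
\item Lemma~\ref{lem:path_structure} does not give localization inside blocks. That comes from the forcing property together with a no-cycle argument (Lemma~\ref{lem:B1-pivot-env}(i)).
\end{itemize}
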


\begin{rem}[Infinite line, finite intervals, and tori]\label{rem:finite-interval-torus}
The main results in this article are stated on the infinite $\ell$-spread-out line. The same argument applies to the finite interval $\{0,1,\ldots,n\}$ and the discrete $n$-cycle with the same limiting constants.
\end{rem}

For simplicity, we will omit the superscript $^{(\ell)}$ from the first-passage time and hopcount notations.
The main idea behind the proof of Theorem~\ref{thm:micro} and Theorem~\ref{thm:micro-hop-clt} is the identification of ``pivot'' nodes. One can view the pivot nodes as the set of all nodes $v$ belonging to all the geodesics starting from the left side of the node $v$ to the right side of $v$. These nodes allow us to decompose the first-passage time $T_n$ into independent contributions, simplifying the analysis. 
However, identifying this random set is nontrivial. To address this, we instead work with a simpler definition of pivot nodes based on local weight distribution as follows.
We will consider two mutually exclusive behaviors for the distribution of $\go$, namely,
\begin{enumerate}[{B}.1.]
\item~\label{ass:B1} There exists a positive real number $a>0$ such that 
\begin{equation}\label{eq:ass_pivot}
	\pr(\go > 2a) > 0 \text{ and } \pr(\go < a) > 0.
\end{equation}
    \item~\label{ass:B2} There exists a positive real number $c>0$ such that $\pr(\go\in [c,2c])=1$. Without loss of generality, we will assume that $c=1$ in this case and $1$ is in the support of $\go$.
\end{enumerate} 
Note that, if $\pr(\go \leq x) > 0$ for all $x > 0$ or $\go$ has unbounded support, then condition B.\ref{ass:B1} is satisfied.

\begin{lem}\label{lem:condB}
Let $\go$ be a nonnegative random variable which is not degenerate. Then conditions \textnormal{B.\ref{ass:B1}} and \textnormal{B.\ref{ass:B2}} are mutually exclusive, and at least one of them holds.
\end{lem}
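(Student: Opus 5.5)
The plan is to work with the essential infimum and supremum of $\go$,
\[
m:=\sup\{x:\pr(\go\ge x)=1\}\in[0,\infty),\qquad M:=\inf\{x:\pr(\go\le x)=1\}\in(0,\infty].
\]
The argument is a short case analysis on these two numbers. Since $\go$ is not degenerate, we have $m<M$. I will use only the defining properties: $\pr(\go<a)>0$ for every $a>m$, and $\pr(\go>b)>0$ for every $b<M$.

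\emph{Mutual exclusivity.} Suppose B.\ref{ass:B2} holds with some $c>0$, so $c\le\go\le 2c$ almost surely. Suppose also that B.\ref{ass:B1} holds with some $a>0$.
\begin{itemize}
\item From $\pr(\go<a)>0$ and $\go\ge c$ a.s., we get $a>c$.
\item Hence $2a>2c$, so $\pr(\go>2a)\le\pr(\go>2c)=0$.
\end{itemize}
This contradicts $\pr(\go>2a)>0$. So the two conditions cannot hold simultaneously.

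\emph{At least one holds.} I split according to whether $M>2m$ or $M\le 2m$ (the case $M=\infty$ falls in the first).

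\emph{Case $M>2m$.} The interval $(m,M/2)$ is nonempty; choose any $a$ in it. Then $a>m\ge0$ gives $a>0$ and $\pr(\go<a)>0$. Also $2a<M$ gives $\pr(\go>2a)>0$. This is exactly B.\ref{ass:B1}. The case $m=0$ belongs here automatically, because $M>m=0=2m$.

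\emph{Case $M\le 2m$.} Since $M>m\ge 0$, this forces $m>0$. Take $c=m$. Then $\pr(\go\in[c,2c])=\pr(m\le\go\le M')=1$ for every $M'\ge M$, and $2m\ge M$, so B.\ref{ass:B2} holds. Moreover $c=m$ is the left endpoint of the support, so $c\in\supp(\go)$. After rescaling $\go$ by $1/c$, which does not affect any of the statements in Theorem~\ref{thm:micro} or Theorem~\ref{thm:micro-hop-clt} beyond trivial constants, we may take $c=1$ with $1$ in the support, as stated in B.\ref{ass:B2}.

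There is no genuine obstacle in this argument. The only points needing care are the boundary conventions: strict versus non-strict inequalities at $m$ and $M$, and the degenerate possibilities $m=0$ or $M=\infty$. Choosing $a$ strictly inside $(m,M/2)$, and using non-degeneracy to rule out $m=0$ in the second case, handles all of these.
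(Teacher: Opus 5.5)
Your proof is correct and follows essentially the same route as the paper. The paper also works with the essential infimum $A$ and essential supremum $B$ of $\go$ and picks $a\in(A,B/2)$ when B.\ref{ass:B2} fails; you additionally verify mutual exclusivity explicitly, which the paper's proof leaves unstated, and the one step you use without comment, $\pr(m\le\go\le M)=1$ in the case $M\le 2m$, follows at once from continuity of the probability measure.
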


\begin{proof}[Proof of Lemma~\ref{lem:condB}]
	Suppose that $\go$ does not satisfy condition~B.\ref{ass:B2}. Then,
	\begin{align}\label{eq:notB2}
		\pr(\go <b \text{ or } \go>2b) >0 \text{ for all } b>0.
	\end{align}
If $\pr(\go>2b)>0$ for all sufficiently large $b$, then we can find $a$ satisfying condition~B.\ref{ass:B1}. On the other hand, there exists $b_0>0$ such that $\pr(\go<b_0)=1$. Define the constants
\begin{align*}
	A:= \sup\{x:F(x) = 0 \} \text{ and } B:=\inf\{x:F(x)=1\}, \text{ where } F(x) = \pr(\go \le x).
\end{align*}
 By~\eqref{eq:notB2}, we have $2A <B$. Therefore, taking any $a\in (A,B/2)$ satisfies~B.\ref{ass:B1}.
\end{proof}

First, we define a notion of `pivot' nodes under the assumption~\textnormal{B.\ref{ass:B1}}.
\begin{defn}\label{defn:B1-pivot}
	Let $a>0$ be a fixed constant satisfying Assumption \textnormal{B.\ref{ass:B1}}.
A vertex $v$ is called a \emph{pivot} if
\begin{align}\label{def:pivot}
	\go_{x,v} < a, \;\; \go_{v,y} < a, \;\; \text{and} \;\; \go_{x,y} > 2a   \text{ for all }x<v<y \text{ with } 2\le  |x-y| \leq \ell.
\end{align}
\end{defn}
Under Condition~B.\ref{ass:B1}, Definition~\ref{defn:B1-pivot} gives an ordinary one-vertex pivot. Figure~\ref{fig:pivot-local} illustrates the local geometry imposed by the pivot condition.  The next Lemma~\ref{lem:pivot} says that such a pivot is a genuine cut point for the FPP metric: every geodesic from the left of the pivot to the right of the pivot must pass through it.

\begin{figure}[htbp]
\centering
\begin{tikzpicture}[
    x=.75cm,y=.75cm,>=Latex,
    every node/.style={font=\small},
    pt/.style={circle,fill,inner sep=1.4pt},
    lightedge/.style={thick},
    heavyedge/.style={thick,dashed}
]

    % baseline
    \draw (-6.3,0) -- (6.3,0);

    % main coordinates
    \coordinate (L)  at (-6,0);
    \coordinate (Ln) at (-4,0);
    \coordinate (Lm) at (-2,0);
    \coordinate (V)  at (0,0);
    \coordinate (Rp) at (2,0);
    \coordinate (Rq) at (4,0);
    \coordinate (R)  at (6,0);

    % visible vertices
    \node[pt,label=below:$v-3$] at (L) {};
    \node[pt,label=below:$v-2$] at (Ln) {};
    \node[pt,label=below:$v-1$] at (Lm) {};
    \node[pt,label=below:$v$]   at (V) {};
    \node[pt,label=below:$v+1$] at (Rp) {};
    \node[pt,label=below:$v+2$] at (Rq) {};
    \node[pt,label=below:$v+3$] at (R) {};

    % light incident edges from v
    \draw[lightedge] (V) to[out=155,in=25] (Lm);
    \draw[lightedge] (V) to[out=155,in=25] (Ln);
    %\draw[lightedge] (V) to[out=145,in=35] (L);
    \draw[lightedge] (V) to[out=25,in=155] (Rp);
    \draw[lightedge] (V) to[out=25,in=155] (Rq);
    %\draw[lightedge] (V) to[out=35,in=145] (R);

    % heavy crossing edges over v (representative examples)
    \draw[heavyedge] (Ln) to[out=40,in=140] (Rp);
    \draw[heavyedge] (Lm)  to[out=25,in=155] (Rp);
    \draw[heavyedge] (Lm)  to[out=25,in=155] (Rq);

    % legend on the right (avoids overlap)
    \draw[lightedge] (6.9,.25) -- (7.9,.25);
    \node[anchor=west,align=left] at (8.1,.25)
    {incident edges from $v$\\have weight $<a$};

    \draw[heavyedge] (6.9,1.25) -- (7.9,1.25);
    \node[anchor=west,align=left] at (8.1,1.25)
    {crossing edges over $v$\\have weight $>2a$};

\end{tikzpicture}
\caption{Local structure of a pivot node for $\ell=3$. The pivot condition requires $2(\ell-1)$ many  incident edges from $v$ to have weight $<a$, while every edge crossing over $v$ to have weight $>2a$. The figure shows only representative examples of such edges.}
\label{fig:pivot-local}
\end{figure}

\begin{lem}\label{lem:pivot}
    Let $v$ be a pivot node according to definition~\ref{defn:B1-pivot}. For every node $ u,w$ with $u<v<w$, the geodesic from $u$ to $w$ passes through $v$.
\end{lem}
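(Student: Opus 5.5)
Since every path from $u<v$ to $w>v$ must get from $\{\dots,v-1\}$ to $\{v+1,\dots\}$, the idea is to show that any path avoiding $v$ can be strictly shortened by a modification that routes through $v$. Hence no geodesic avoids $v$.

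\textbf{Locating the crossing.} Let $\pi$ be a path from $u$ to $w$ that does not visit $v$. Since $\pi$ starts at $u<v$, ends at $w>v$ and never visits $v$, it contains an edge $(x,y)$ with $x<v<y$ (traversed in some direction). Because edges have length at most $\ell$, we have $2\le y-x\le \ell$. By Definition~\ref{defn:B1-pivot}, $\go_{x,y}>2a$. In addition $|x-v|\le \ell-1$ and $|y-v|\le\ell-1$, so $(x,v)$ and $(v,y)$ are edges of $\dZ^{(\ell)}$. The pivot condition, read over all pairs $x<v<y$ with $y-x\le\ell$, gives $\go_{x,v}<a$ and $\go_{v,y}<a$.

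\textbf{The replacement.} Replace the single edge $(x,y)$ in $\pi$ by the two-edge detour $x\to v\to y$. This yields a walk from $u$ to $w$ through $v$ whose weight is at most
\[
\text{weight}(\pi)-\go_{x,y}+\go_{x,v}+\go_{v,y}<\text{weight}(\pi)-2a+2a=\text{weight}(\pi).
\]
Weights are nonnegative, so erasing loops from this walk does not increase the weight, and we obtain a path through $v$ of strictly smaller weight than $\pi$. So $\pi$ is not a geodesic. Every geodesic from $u$ to $w$ therefore contains $v$, and in particular so does the geodesic of the statement (or every geodesic, if it is not unique).

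\textbf{Expected obstacle.} The only delicate point is the edge-range bookkeeping: checking that the crossing edge has $y-x\ge2$, and that both incident edges $(x,v)$ and $(v,y)$ exist and fall under the pivot condition. This holds because $x$ and $y$ lie within distance $\ell-1$ of $v$, which is exactly the range covered by the $2(\ell-1)$ light incident edges in Figure~\ref{fig:pivot-local}. The rest is the strict-inequality computation above, together with the remark that loop-erasure preserves the improvement.
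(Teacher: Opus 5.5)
Your proof is correct and follows the same route as the paper. Both arguments locate an edge $(x,y)$ of the avoiding path with $x<v<y$ and use $\go_{x,v}+\go_{v,y}<2a<\go_{x,y}$ to strictly shorten the path via $x\to v\to y$; your loop-erasure step is harmless but unnecessary, since the original path avoids $v$ and inserting $v$ once keeps it simple.
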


\begin{proof}[Proof of Lemma~\ref{lem:pivot}]
	Let $v$ be a pivot node. Suppose a geodesic from $u$ to $w$ for some $u<v<w$ does not include the pivot node $v$. Then there must be adjacent nodes $x<y$ in the path with $x<v<y, y-x\le \ell$, forcing the path to use the edge $(x,y)$ instead of $(x,v)$ and $(v,y)$. By the pivot node condition, we have
	$
		 \go_{x,v} +\go_{v,y} < 2a < \go_{x,y},
	$ which leads to a contradiction.
	Therefore, every geodesic must pass through the pivot node.
 \end{proof}

However, under the complementary Condition~B.\ref{ass:B2}, an ordinary one-vertex pivot cannot exist. We then use a finite chain of locally favorable vertices. After rescaling, we assume without loss of generality that
\begin{equation*}
    \inf\supp(\go)=1,
    \qquad
    \sup\supp(\go)=1+s, \text{ for some } s\in(0,1],
\end{equation*}
Thus, there exists an integer $k\ge 2$
such that
\begin{equation}\label{eq:B2-light-heavy-positive}
\pr(1\le \go<1+1/(k+1))>0
\text{ and }
\pr(\go>1+2/k)>0.	
\end{equation}
We call an edge \emph{light} if $1\le\go_e<1+1/(k+1)$ and \emph{heavy} if $\go_e>1+2/k$.
\begin{defn}[Generalized pivot]\label{defn:B2-kpivot}
A vertex $x$ is called a \emph{generalized pivot} if the set
\[
    S_x:=\{x,x+\ell,\ldots,x+(k-1)\ell\}.
\]
satisfies the following two conditions
\begin{enumeratei}
    \item every edge with one endpoint in $S_x$ is light, except no restriction on the two edges $(x-\ell,x)$ and $(x+(k-1)\ell,x+k\ell)$;
    \item every edge crossing over one of the vertices in $S_x$ is heavy.
\end{enumeratei}
\end{defn}

Note that the original pivot definition under Condition~B.\ref{ass:B1}, corresponds to taking $k=1$ in the generalized pivot case with $a=3/2$. The analogue of Lemma~\ref{lem:pivot} in this case is the following statement.
\begin{lem}\label{lem:B2-kpivot-forcing}
Let $x$ be a generalized pivot.  If $u<x$ and 
$
    w>x+(k-1)\ell,
$
then every geodesic from $u$ to $w$ passes through $x$.
\end{lem}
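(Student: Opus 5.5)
The plan is to argue by contradiction, using a surgery that is not purely local. Since all weights lie in $[1,2]$, replacing a single heavy crossing edge $(a,b)$ over $x$ by the two edges $(a,x),(x,b)$ does not help: two light edges cost about $2$, which exceeds $1+2/k$. So instead I will replace a whole segment of the geodesic that skips $x$ by a detour through the chain $S_x$. The parameters in \eqref{eq:B2-light-heavy-positive} are tuned so that this chain detour wins.

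\textbf{Setup.} Let $\gamma$ be a geodesic from $u<x$ to $w>x+(k-1)\ell$, and suppose $\gamma$ avoids $x$. Two facts drive the argument.
\begin{enumeratei}
\item Since consecutive vertices of $\gamma$ are at distance at most $\ell$, $\gamma$ must contain an edge $(a,b)$ with $a<x<b$; take the first such edge. This edge crosses over $x\in S_x$, so it is heavy. From $b-a\le\ell$ we get $x-\ell<a<x<b<x+\ell$.
\item Any edge of length at most $\ell$ crosses over at most one vertex of $S_x$, because the points of $S_x$ are spaced exactly $\ell$ apart.
\end{enumeratei}
Let $q$ be the first vertex of $\gamma$ after $b$ (allowing $q=b$) that either lies in $S_x\setminus\{x\}$ or exceeds $x+(k-1)\ell$. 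Such a $q$ exists because $w>x+(k-1)\ell$. I will compare the subpath $\gamma[a,q]$ with an explicit alternative and split into two cases.

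\textbf{Case 1: $q=x+j\ell$ with $1\le j\le k-1$.} The subpath $\gamma[a,q]$ goes from $a<x$ to $x+j\ell$ without visiting any of $x,x+\ell,\dots,x+(j-1)\ell$. It must therefore cross over each of these $j$ vertices, and by fact (ii) it does so with $j$ distinct heavy edges. Since $q-a>j\ell$, the subpath has at least $j+1$ edges, each of weight at least $1$. Hence
\[
\text{weight}\bigl(\gamma[a,q]\bigr) > (j+1)+\tfrac{2j}{k}.
\]
For the alternative path $a\to x\to x+\ell\to\dots\to x+j\ell$, every edge is light:
\begin{enumeratei}
\item The edge $(a,x)$ has an endpoint in $S_x$, and $a\neq x-\ell$ because $a>x-\ell$, so it is not the exceptional edge $(x-\ell,x)$.
\item The edges $(x+i\ell,x+(i+1)\ell)$ for $i<j\le k-1$ have both endpoints in $S_x$ and are not the exceptional edge $(x+(k-1)\ell,x+k\ell)$.
\end{enumeratei}
So the alternative costs less than
\[
(j+1)\Bigl(1+\tfrac{1}{k+1}\Bigr) = (j+1)+\tfrac{j+1}{k+1} \le (j+1)+\tfrac{2j}{k+1} < (j+1)+\tfrac{2j}{k}.
\]

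\textbf{Case 2: $q>x+(k-1)\ell$.} Now $\gamma[a,q]$ crosses all $k$ vertices of $S_x$, so it contains $k$ heavy edges. It also has at least $k+1$ edges, since $q-a>k\ell$. Its weight therefore exceeds $k(1+2/k)+1=k+3$. The alternative is $a\to x\to\dots\to x+(k-1)\ell\to q$, which is legitimate because $q$ is reached from a vertex $\le x+(k-1)\ell$ in one step, so $q\le x+k\ell$. Its first $k$ edges are light, and the last edge weighs at most $2$, this bound also covering the exceptional edge when $q=x+k\ell$. So the alternative costs less than $k\bigl(1+\tfrac{1}{k+1}\bigr)+2<k+3$.

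In both cases the alternative is strictly lighter, contradicting that $\gamma$ is a geodesic. Hence every geodesic from $u$ to $w$ visits $x$.

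The main obstacle is the bookkeeping showing that the skipped segment really contains enough edges and enough heavy edges. This rests on fact (ii), on the lower bound $q-a>j\ell$ for the number of edges, and on the observation that $\gamma$ may be non-monotone. Non-monotonicity only adds edges, so all the bounds above remain valid lower bounds. I would also double-check that the exceptional edges $(x-\ell,x)$ and $(x+(k-1)\ell,x+k\ell)$ enter only where the argument above allows.
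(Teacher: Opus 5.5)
Your Case 1 is correct, and your overall strategy is the paper's: replace the segment that skips $x$ by the corridor through $S_x$. The gap is in Case 2, in the claim that $\gamma[a,q]$ has at least $k+1$ edges because $q-a>k\ell$. That inequality is false. You only know $a<x$ and $q>x+(k-1)\ell$, which gives $q-a>(k-1)\ell$, i.e.\ at least $k$ edges. This is nothing beyond what the $k$ heavy crossings already force. For example, take $x=0$, $\ell=3$, $k=3$, so $S_x=\{0,3,6\}$. The segment $-1\to 1\to 4\to 7$ crosses $0,3,6$ with three heavy edges, has $q=7$, and $q-a=8<9=k\ell$. In such a configuration the only guaranteed bound is $\mathrm{weight}(\gamma[a,q])>k(1+\frac2k)=k+2$. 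Your bound on the alternative, $k(1+\frac1{k+1})+2=k+2+\frac{k}{k+1}$, exceeds $k+2$, so no contradiction follows.

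The repair is to sharpen the alternative, not the segment. Since $k\ge 2$, you have $b<x+\ell\le x+(k-1)\ell$ and $b\notin S_x$, so $q\neq b$. Hence the predecessor $p$ of $q$ on $\gamma$ is $b$ or a later vertex. By minimality of $q$, and because $\gamma$ avoids $x$, you get $p\notin S_x$ and $p\le x+(k-1)\ell$. Therefore $p<x+(k-1)\ell$ and $q\le p+\ell<x+k\ell$. So the case $q=x+k\ell$ that you provided for never occurs. The last corridor edge $(x+(k-1)\ell,q)$ is then incident to $S_x$ and is not the exceptional edge $(x+(k-1)\ell,x+k\ell)$, so it is light. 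The corridor thus consists of $k+1$ light edges and costs less than $(k+1)(1+\frac1{k+1})=k+2$, which is strictly below $\mathrm{weight}(\gamma[a,q])$. This is exactly the paper's Case 2. The paper starts the replaced segment at the last vertex strictly left of $x$ rather than at your first crossing edge, which makes no difference to the estimates.
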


The proof is given in Section~\ref{sec:aux}. The idea is that a path avoiding the chain $S_x$ must use at least $k$ heavy crossing edges, whereas the path through the chain uses only $(k+1)$ light edges; the choice $\go<1+1/(k+1)$ makes the path strictly cheaper. 

In either case, ordinary pivots under Condition~B.\ref{ass:B1} or generalized pivots under Condition~B.\ref{ass:B2}, we obtain the pivot nodes in increasing order on the infinite $\ell$-spread-out line, and we denote them by
\[
    0\le \rho_1<\rho_2<\rho_3<\cdots .
\]
For consecutive pivot nodes, we define
\[
    \sfP_i:=\rho_{i+1}-\rho_i,
    \qquad
    \sfT_i:=T(\rho_i,\rho_{i+1}),
    \qquad
    \sfH_i:=H(\rho_i,\rho_{i+1}) .
\]
Let
\begin{align}\label{eq:kappa-def}
    \gk_n:=
    \begin{cases}
        \max\{i\ge1:\rho_i\le n\},
        &\text{under Condition~B.\ref{ass:B1}},\\[1mm]
        \max\{i\ge1:\rho_i+(k-1)\ell\le n\},
        &\text{under Condition~B.\ref{ass:B2}},
    \end{cases}
\end{align}
with the convention that $\max\eset=0$. For the renewal-time formulas below, we use the convention $k=1$ under Condition~B.\ref{ass:B1} and set
\begin{align}\label{eq:Mk-def}
    M_k:=(k-1)\ell.
\end{align}
Thus, $M_k=0$ in the ordinary-pivot case. With this convention, 
\[
\gk_n=\max\{i\ge1:\rho_i+M_k\le n\}
\] 
in both cases. The pivot forcing property implies that the passage time and the hop count admit the regenerative decompositions
\begin{align}
    T(0,n)
    &=
    \sum_{i=1}^{\gk_n-1}\sfT_i+\sfRT_n,
    \label{eq:T-decomp}\\
    H(0,n)
    &=
    \sum_{i=1}^{\gk_n-1}\sfH_i+\sfRH_n .
    \label{eq:H-decomp}
\end{align}

where $\sfRT_n$ and $\sfRH_n$ denote the boundary residual terms for the passage time and hop count, respectively. These residuals are explicitly given by
\begin{align}\label{eq:residual-def}
    \sfRT_n=T(0,\rho_1)+T(\rho_{\gk_n},n),
    \text{ and }
    \sfRH_n=H(0,\rho_1)+H(\rho_{\gk_n},n)
    \quad\text{for }\gk_n\ge 1.
\end{align}
If $\gk_n = 0$, we have $\sfRT_n = T(0,n)$ and $\sfRH_n = H(0,n)$.
By Lemma~\ref{lem:B1-pivot-env} in the ordinary-pivot case and Lemma~\ref{lem:B2-pivot-env} in the generalized-pivot case, the sequence
$
    \{(\sfP_i,\sfT_i,\sfH_i)\}_{i\ge1}
$
is i.i.d.~This is the renewal structure used throughout the microscopic analysis.

\subsection{Existing Results}\label{ssec:lit}

While our study is partially motivated by the spatial and complex-network phenomena discussed earlier in Section~\ref{sec:intro}, it is crucial to distinguish our deterministic spread-out geometry from the most common probabilistic long-range constructions. In small-world and long-range percolation models, long edges are typically added randomly, often with probabilities depending on distance~\cite{wattsstrogatz98,newmanwatts99,kleinberg00,biskup04}. In densification models, the average degree or link density may itself grow with the system size~\cite{leskovec07}. By contrast, in the microscopic regime studied here, $\ell$ is fixed, and all edges of range at most $\ell$ are present deterministically. The randomness lies exclusively in the passage times. Consequently, the relevant mathematical comparison for our work is not mean-field FPP or random long-range percolation, but rather finite-range, one-dimensional FPP with a regenerative structure.

The present paper is concerned with a one-dimensional finite-range version of FPP, and its method falls within the regeneration approach to one-dimensional first- and last-passage models.
The closest predecessor is Ahlberg~\cite{a15}, who studies FPP on one-dimensional periodic graphs and proves laws of large numbers, central limit theorems, laws of the iterated logarithm, and Donsker theorems for passage times and geodesic lengths by exposing a regenerative structure. The fixed-$\ell$ spread-out line belongs to the same broad one-dimensional class, and the Gaussian part of our result is consistent with this regenerative picture. Our contribution is different in that we exploit the explicit cut geometry of the $\ell$-spread-out line to obtain sharp moment thresholds, a precise one-block tail asymptotic, and a stable fluctuation regime.

There are also explicit Markovian approaches to low-dimensional FPP.  Renlund~\cite{renlund10} studies FPP with exponential times on a ladder through an associated Markov chain, and Schlemm~\cite{schlemm11} analyzes FPP on the ladder via finite-dimensional Markov transition kernels and obtains a central limit theorem.  These works are methodologically different from ours: they rely on a tractable finite-dimensional state description, whereas our pivot construction works directly at the level of the FPP geometry and applies to general non-degenerate edge-weight distributions in the fixed-range regime.

The word ``spread-out'' appears in the lace-expansion literature, where high-dimensional models are modified by allowing long but finite-range bonds to access mean-field behavior; see, for instance, \cite{slade06lace,hvdhs03lace}.  Our use of the spread-out geometry is different.  All edges of range at most $\ell$ are present, the randomness lies in the passage times, and the main objects are geodesics, hop counts, and passage-time fluctuations rather than critical two-point functions or cluster sizes.  Thus, the lace expansion is not used in our proof; in the fixed-range regime, the relevant structure is regenerative.

We do not claim that the concept of pivot nodes is new.  Rather, the novelty lies in applying this structure to the specific geometry of undirected finite-range FPP and in extracting the exact heavy-tail consequences of that geometry.  The spread-out line has
$
    \lc={\ell(\ell+1)}/2
$
many edge-disjoint routes of similar length through a unit cut.  Hence, a regeneration-block passage time can have a finite $k$-th moment even when the edge weight itself does not.  We identify the exact threshold $k/{\lc}$ for the $k$-th moment of the block passage time, and prove the precise one-block tail asymptotic
\begin{align*}
    \pr(\sfT_1>x)
    =
    \mu_{\sfP}\,\bar F(x)^{\lc}\cdot (1+o(1)) \text{ as } x\to\infty
\end{align*}
where $\mu_{\sfP}$ is the expected gap between two pivot nodes.
This tail asymptotic yields a distributional phase transition: above the second-moment threshold, the regeneration rewards produce Gaussian and Brownian fluctuations, while below it, they produce a totally right-skewed stable law.  At the process level, the same tail asymptotic gives a stable L\'evy limit for the renewal-skeleton process.  This last statement should be read in this skeleton sense: the corresponding deterministic-target process $\{T(0,\lfloor nt\rfloor):t\ge0\}$ requires additional control of endpoint and intra-block residuals and is treated below as a separate open problem.  In particular, the stable phase is not a direct consequence of earlier finite-variance regenerative FPP theory; it relies on the exact regular variation of the block reward, which is a geometric consequence of the finite cut constant $\lc$.

Regeneration ideas are also central in directed last-passage models on the line.  Foss, Martin, and Schmidt~\cite{fms14} construct pivot nodes for a long-range directed LPP model and obtain a strong law and a functional central limit theorem in the finite-second-moment case.  In their infinite-variance regime, however, the limiting object is a continuous last-passage percolation model on $[0,1]$, rather than the stable L\'evy skeleton process arising from our one-block tail asymptotic.  This contrast highlights the role of the first-passage metric and the undirected finite-range cut geometry in the present paper.

A broader directed-graph literature uses the same structural idea under different names: skeleton points, posts, renewal points, $c$-renewal points, and renovating events.  The classical directed acyclic model goes back to Barak and Erd\H{o}s~\cite{barakerdos84}.  Foss and Konstantopoulos~\cite{fk03} developed extended renovation theory and limit theorems for stochastic ordered graphs.  Denisov, Foss, and Konstantopoulos~\cite{dfk12} used regenerative skeletons to prove limit theorems for directed graphs on the line and on finite slabs.  Foss and Zachary~\cite{fz13} gave a general framework for regenerative structures whose defining random times may depend simultaneously on the past and on the future.  This point is relevant here because a pivot condition is local but two-sided: deciding that a vertex is a pivot requires inspecting edge weights on both sides of the vertex.

Further developments of the skeleton-point structure include Foss and Konstantopoulos~\cite{fk18}, Konstantopoulos, Logachov, Mogulskii, and Foss~\cite{klmf21}, Foss, Konstantopoulos, and Pyatkin~\cite{fkp23}, and the survey of Foss, Konstantopoulos, Mallein, and Ramassamy~\cite{fkmr24}.  The one-dimensional ordered setting is typically regenerative and therefore, under finite second moments, leads to Gaussian fluctuations. By contrast, two-dimensional directed models may exhibit Tracy--Widom fluctuations. For example, Konstantopoulos and Trinajsti\'c~\cite{kt13} prove a Tracy--Widom limit for longest paths in a directed random graph on $\dZ^2$ in a scaling regime where the two side lengths grow at different powers.  This should not be interpreted as a generic ``high-dimensional'' phenomenon.  Rather, it reflects the additional two-dimensional directed path geometry, and is closer in spirit to the Tracy--Widom limits in exactly solvable $(1+1)$--dimensional last-passage growth models such as Johansson's model~\cite{johansson00}.

The pivot method should also be contrasted with deterministic block decompositions.  When $\ell$ is fixed, pivots occur with positive probability and force all left-to-right geodesics through random cut points.  This gives an exact regenerative decomposition of the original FPP metric.  Conversely, as the range or graph width grows with $n$, the probability of finding such local pivots typically degenerates.  In such regimes, deterministic blocking, approximation errors, and triangular-array central limit theorems become natural.  Chatterjee and Dey~\cite{cd13} use this type of approach for FPP across growing thin cylinders.  Thus, the random pivot decomposition used here and deterministic block decompositions are complementary tools for different geometric regimes.

Finally, the martingale central limit theorem perspective is compatible with our proof, but is not the main structural input.  Classical martingale CLTs~\cite{brown71,hallheyde80,mcleish74} can be used after regeneration, but the pivot construction is the input that makes the increments explicit and independent.

The probabilistic limit theorems used after the regeneration structure is established are classical.  The Gaussian and Brownian limits follow from renewal-reward theory and invariance principles, while the stable and L\'evy skeleton limits follow from regular variation and stable convergence for partial sums; see, for example, \cite{mitov2014,whitt02,resnick07,bks12}.

\subsection{Notations}\label{ssec:notation}

Throughout the paper, dependence on the fixed range $\ell$ and on the weight distribution is usually suppressed.
For random variables $X,Y$, we write $X\preceq Y$ for stochastic domination, and for $p\ge1$ we write
\[
    \|X\|_p:=\big(\E |X|^p\big)^{1/p}.
\]
The tail of the edge-weight distribution is denoted by
\[
    \bar F(x):=\pr(\go>x).
\]

Unless a finite graph is explicitly specified, all microscopic constructions are made on the infinite $\ell$-spread-out line.  We write
\[
    \cE^{(\ell)}
    :=
    \{u,v\in\dZ:u<v,\ v-u\le \ell\}.
\]
For integers $x<y$, let
\begin{align}\label{eq:interval-env-notation}
    \cE[x,y]
    &:=
    \{(u,v)\in\cE^{(\ell)}:x\le u<v\le y\},
    \\
    \Env[x,y]
    &:=
    \{\go_{u,v}:(u,v)\in\cE[x,y]\}.
\end{align}
If $I\subseteq\dZ$ is an interval and $u,v\in I$, then $T_I(u,v)$ denotes the passage time restricted to the induced subgraph on $I$. When no subscript is present, $T(u,v)$ is the unrestricted passage time on the infinite $\ell$-spread-out line.  A geodesic means a path attaining this passage time.  We denote by
$
    H(u,v)
$
the minimum number of edges among all geodesics from $u$ to $v$. We will also use 
\[H_n:=H(0,n)\text{ and } T_n:=T(0,n).
\]

If there are multiple minimum-hop geodesics, we uniquely select one, denoted by $\pi^\star(u,v)$, using a standard shift-invariant lexicographic order. This selection is consistent under path concatenation at pivot points. In particular, we write $\pi_n^\star := \pi^\star(0,n)$. This convention is needed only for the geometric statistics of geodesics and does not affect the definitions of $T(u,v)$ and $H(u,v)$.
Moreover, this selection is strictly compatible with path concatenation: whenever every geodesic from $u$ to $w$ must pass through a vertex $v$ (with $u<v<w$), we have
\[
\pi^\star(u,w) = \pi^\star(u,v)\circ\pi^\star(v,w),
\]
because any minimum-hop geodesic from $u$ to $w$ naturally splits into minimum-hop subpaths at $v$, and the lexicographic order preserves this spatial decomposition.

The pivot nodes are denoted by
\[
    0\le \rho_1<\rho_2<\cdots.
\]
Under Condition~B.\ref{ass:B1}, $\rho_i$ is an ordinary pivot node. Under Condition~B.\ref{ass:B2}, $\rho_i$ denotes the left endpoint of a generalized pivot chain. In the latter case, the chain associated with $\rho_i$ is $\{\rho_i,\rho_i+\ell,\ldots,\rho_i+(k-1)\ell\}$.  The renewal count $\gk_n$ is defined in~\eqref{eq:kappa-def}; in particular, under Condition~B.\ref{ass:B2}, only generalized pivots whose full chain lies to the left of $n$ are counted.

For renewal block variables, define the gap of consecutive pivots, passage-time, and hop-count by
\begin{equation}\label{eq:block-variable-notation}
    \sfP_i:=\rho_{i+1}-\rho_i,
    \qquad
    \sfT_i:=T(\rho_i,\rho_{i+1}),
    \qquad
    \sfH_i:=H(\rho_i,\rho_{i+1}).
\end{equation}
We also use the partial sums
\[
    S_m^{\sfP}:=\sum_{i=1}^m\sfP_i,
    \qquad
    S_m^{\sfT}:=\sum_{i=1}^m\sfT_i,
    \qquad
    S_m^{\sfH}:=\sum_{i=1}^m\sfH_i,
\]
and the renewal counting process
\[
    \sfN_t:=\max\{m\ge0:S_m^{\sfP}\le t\}.
\]
The endpoint residuals in the point-to-point decomposition are denoted by $\sfRT_n$ and $\sfRH_n$ and are defined in~\eqref{eq:residual-def}.

Whenever the following expectations are finite, set
\begin{align}\label{eq:mu-nu-eta}
    \mu_{\sfP}:=\E\sfP_1,
    \qquad
    \mu_{\sfT}:=\E\sfT_1,
    \qquad
    \mu_{\sfH}:=\E\sfH_1 .
\end{align}
For the passage-time and joint CLTs, write
\begin{align}\label{eq:sigma12}
    \gs_{\sfP}^2&:=\var(\sfP_1),
    \qquad
    \gs_{\sfT}^2:=\var(\sfT_1),
    \qquad
    \gs_{\sfH}^2:=\var(\sfH_1),
    \notag\\
    \gs_{\sfP\sfT}&:=\cov(\sfP_1,\sfT_1),
    \qquad
    \gs_{\sfP\sfH}:=\cov(\sfP_1,\sfH_1),
    \qquad
    \gs_{\sfT\sfH}:=\cov(\sfT_1,\sfH_1).
\end{align}
The asymptotic passage-time variance is
\begin{align}\label{eq:var-CLT-def}
\gs_{\mathrm{FPP}}^2
:=
\frac1{\mu_{\sfP}}
\var\left(\sfT_1-\frac{\mu_{\sfT}}{\mu_{\sfP}}\sfP_1\right) 
=
\frac{\mu_{\sfP}^2\gs_{\sfT}^2+
      \mu_{\sfT}^2\gs_{\sfP}^2-
      2\mu_{\sfP}\mu_{\sfT}\gs_{\sfP\sfT}}
     {\mu_{\sfP}^3}.
\end{align}
For the hop count, define
\begin{align}\label{eq:hop-var-def}
    \gs_{\mathrm{Hop}}^2
    :=
    \frac1{\mu_{\sfP}}
    \var\left(\sfH_1-\frac{\mu_{\sfH}}{\mu_{\sfP}}\sfP_1\right)
    =
    \frac{\mu_{\sfP}^2\gs_{\sfH}^2+
          \mu_{\sfH}^2\gs_{\sfP}^2-
          2\mu_{\sfP}\mu_{\sfH}\gs_{\sfP\sfH}}
         {\mu_{\sfP}^3}.
\end{align}
Finally, the joint covariance coefficient between the passage-time and hop-count limits is
\begin{align}\label{eq:TH-cov-def}
    \gs_{\mathrm{TH}}
    :=
    \frac1{\mu_{\sfP}}
    \cov\left(
        \sfT_1-\frac{\mu_{\sfT}}{\mu_{\sfP}}\sfP_1,\,
        \sfH_1-\frac{\mu_{\sfH}}{\mu_{\sfP}}\sfP_1
    \right)
    =
    \frac{\mu_{\sfP}^2\gs_{\sfT\sfH}
          -\mu_{\sfP}\mu_{\sfT}\gs_{\sfP\sfH}
          -\mu_{\sfP}\mu_{\sfH}\gs_{\sfP\sfT}
          +\mu_{\sfT}\mu_{\sfH}\gs_{\sfP}^2}
         {\mu_{\sfP}^3},
\end{align}
and the two-dimensional covariance matrix is
\begin{align}\label{eq:joint-cov-def}
\Sigma_\ell
:=
\frac1{\mu_{\sfP}}
\cov\left(
\begin{pmatrix}
    \sfT_1-\frac{\mu_{\sfT}}{\mu_{\sfP}}\sfP_1\\[1mm]
    \sfH_1-\frac{\mu_{\sfH}}{\mu_{\sfP}}\sfP_1
\end{pmatrix}
\right).
\end{align}

\begin{rem}
Although the auxiliary renewal construction depends on the choice of admissible pivot parameters---namely $a$ under Condition~B.\ref{ass:B1} and $k$ under Condition~B.\ref{ass:B2}---the limiting constants appearing in the laws of large numbers and central limit theorems are intrinsic to the model and therefore independent of these parameters. Consequently, we suppress this auxiliary dependence in our notation.
\end{rem}

\subsection{Structure of the article}

The paper is organized as follows.
Section~\ref{sec:pivot} develops the pivot-regeneration framework under both Condition~B.\ref{ass:B1} and Condition~B.\ref{ass:B2}. There, we prove the renewal decomposition, the block moment estimates, and the endpoint residual bounds, and then derive Theorem~\ref{thm:micro} (i)--(ii) and Theorem~\ref{thm:micro-hop-clt}.  
Section~\ref{sec:stable} treats the heavy-tailed stable regime under Condition~A.\ref{ass:A2}; the key input is the one-block tail asymptotic with exponent $\lc=\ell(\ell+1)/2$, which yields Theorem~\ref{thm:micro-stable}. 
Section~\ref{sec:aux} contains the deferred structural proofs for the ordinary and generalized pivot constructions.

%%%%%%%%%%%%%%%%%%%%%%%%%%%%%%%%%%%%%%%%%%%%%%%%%%%%%%%%%%%%%%%%%%%%%%%%%%%%%%%%%%%
\section{Pivot Control and Gaussian CLT}\label{sec:pivot}
%%%%%%%%%%%%%%%%%%%%%%%%%%%%%%%%%%%%%%%%%%%%%%%%%%%%%%%%%%%%%%%%%%%%%%%%%%%%%%%%%%%
In this section, we prove Theorem~\ref{thm:micro}--\eqref{thm:mean},~\eqref{thm:clt}, and Theorem~\ref{thm:micro-hop-clt} by establishing an exact i.i.d.~renewal structure for the first-passage time and hop count. While the central limit theorem follows readily under the finite second moment assumption (Condition~A.\ref{ass:A1}), the heavy-tailed regime (Condition~A.\ref{ass:A2}) requires a precise identification of the moment threshold $k/\lc$ for the block passage times. Once this moment control is established, parts~\eqref{thm:mean} and~\eqref{thm:clt} of Theorem~\ref{thm:micro} follow naturally from standard results in renewal theory~\cite{mitov2014}.
We begin by formulating a precise version of our main results, explicitly identifying the centering and scaling constants that were left unspecified in Theorems~\ref{thm:micro} and~\ref{thm:micro-hop-clt}.
 
\begin{thm}[First-passage time and hop-count limits]
\label{thm:LLN-CLT}
Assume that $\ell\ge2$ and $\go$ is not deterministic. 
\begin{enumeratea}
\item If either $\E\go<\infty$, or Assumption~A.\ref{ass:A2} holds with
$\gc>1/\lc$, then
\begin{align}\label{eq:B1-weak-lln}
    \frac{T_n}{n}
    \stackrel{\pr}{\to}
    \frac{\mu_{\sfT}}{\mu_{\sfP}}.
\end{align}

\item If either $\E\go^2<\infty$, or Assumption~A.\ref{ass:A2} holds with $\gc>2/\lc$ then
\begin{align}\label{eq:B1-clt}
    n^{-1/2}\left(T_n-n\cdot {\mu_{\sfT}}/{\mu_{\sfP}}\right)
    \Longrightarrow
    \N(0,\gs_{\mathrm{FPP}}^2),
\end{align}
where $\gs_{\mathrm{FPP}}^2$ is defined in~\eqref{eq:var-CLT-def}.

\item For the hop-count, we have
\begin{align}\label{eq:hop-lln}
    \frac{H_n}{n} \to \frac{\mu_{\sfH}}{\mu_{\sfP}} \quad \text{a.s.}, 
    \quad \text{and} \quad 
    \frac{\E H_n}{n} \to \frac{\mu_{\sfH}}{\mu_{\sfP}}.
\end{align}
Moreover, the central limit theorem holds:
\begin{align}\label{eq:hop-clt}
    n^{-1/2}\left(H_n-n\cdot {\mu_{\sfH}}/{\mu_{\sfP}}\right) \Longrightarrow \N(0,\gs_{\mathrm{Hop}}^2),
\end{align}
where $\gs_{\mathrm{Hop}}^2$ is defined in~\eqref{eq:hop-var-def}.

\item If either $\E\go^2<\infty$, or Assumption~A.\ref{ass:A2} holds with
$\gc>2/\lc$, then the joint central limit theorem holds:
\begin{align}\label{eq:B1-joint-clt}
   n^{-1/2}
    \begin{pmatrix}
        T_n-n\cdot {\mu_{\sfT}}/{\mu_{\sfP}}\\[2mm]
        H_n-n\cdot {\mu_{\sfH}}/{\mu_{\sfP}}
    \end{pmatrix}
    \Longrightarrow
    \N_2(\mvzero,\Sigma_\ell),
\end{align}
where $\Sigma_\ell$ is defined in~\eqref{eq:joint-cov-def}. Since
$
    \E H_n
    =
    n\cdot {\mu_{\sfH}}/{\mu_{\sfP}}+O(1),
$
the second coordinate in~\eqref{eq:B1-joint-clt} may equivalently be
replaced by $(H_n-\E H_n)/\sqrt n$.\end{enumeratea}
\end{thm}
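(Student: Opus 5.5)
The argument will be a renewal-reward proof built on the decompositions~\eqref{eq:T-decomp}--\eqref{eq:H-decomp}. We use the fact (Lemmas~\ref{lem:B1-pivot-env} and~\ref{lem:B2-pivot-env}) that $\{(\sfP_i,\sfT_i,\sfH_i)\}_{i\ge1}$ is i.i.d. Once block moments and residuals are controlled, all four parts reduce to classical statements for the cumulative process $S^{\sfT}_{\sfN_t}$.

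\textbf{Step 1: gaps and hop counts.} Pivot events at sites at distance $>2\ell$ (or $>2(k+1)\ell$ in the generalized case) depend on disjoint edge sets, and each has probability $p>0$ by B.\ref{ass:B1} or~\eqref{eq:B2-light-heavy-positive}. Hence $\pr(\sfP_1>m)\le (1-p)^{\lfloor m/(2k\ell+2\ell)\rfloor}$, and the same bound holds for $\rho_1$. So $\sfP_1$ has exponential moments. Since $\sfH_1\le \ell\,\sfP_1$ crudely (a minimal-hop geodesic between pivots has hops controlled by the span it covers, up to excursions confined by the pivot forcing), $\sfH_1$ also has all moments. Thus $\mu_{\sfP},\mu_{\sfH}<\infty$.

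\textbf{Step 2: block passage time moments.} I would invoke Proposition~\ref{prop:k-moment}: $\E\sfT_1^j<\infty$ whenever $\E\go^j<\infty$, or under A.\ref{ass:A2} with $\gc>j/\lc$. The mechanism is the one behind Lemma~\ref{lem:path_structure}. There are $\lc$ edge-disjoint paths across each unit cut, each of bounded length, so $\sfT_1\le \sum_{\text{cuts in block}}\min_{r\le \lc}Y_r$, where the $Y_r$ are independent sums of $O(\ell)$ weights. The minimum of $\lc$ such sums has tail $\bar F(x)^{\lc}$ up to constants. The random block length is handled by the exponential tail of $\sfP_1$ together with Hölder's inequality (absorbing $o(1)$ in the exponent, since $\gc\lc>j$ strictly). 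I would treat this step as given from the proposition. Its difficulty lies in the dependence between $\sfP_1$ and the weights, which can be dealt with by conditioning on the pivot environment.

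\textbf{Step 3: residuals.} I must show $\sfRT_n/\sqrt n\to0$ and $\sfRH_n/\sqrt n\to0$ in probability. The term $T(0,\rho_1)$ is a fixed tight variable. The term $T(\rho_{\gk_n},n)$ is bounded by the same disjoint-path construction over the window $[\rho_{\gk_n},n]$, whose length is stochastically dominated by the size-biased/age distribution of $\sfP$; this has exponential tails uniformly in $n$. So the residual is tight, and hence $o(\sqrt n)$. The same holds for $\sfRH_n$, which also has bounded expectation, giving $\E H_n=n\mu_{\sfH}/\mu_{\sfP}+O(1)$. This relies on the elementary renewal theorem and Wald's identity with the stopping time $\gk_n$, which is a stopping time for the block filtration.

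\textbf{Step 4: limit theorems.} For (a), the strong LLN gives $S^{\sfT}_m/m\to\mu_{\sfT}$ and $\gk_n/n\to1/\mu_{\sfP}$. Combined with the residual bound this gives~\eqref{eq:B1-weak-lln}. Finiteness of $\mu_{\sfT}$ uses Step 2 with $j=1$. For (c), the same argument applies almost surely, since the residual satisfies $\sfRH_n/n\to0$ a.s. by Borel--Cantelli with exponential tails. For (b) and (d), apply the multivariate renewal-reward CLT~\cite{mitov2014}, i.e. Anscombe's theorem applied to the i.i.d. centered vectors $(\sfT_i-\tfrac{\mu_{\sfT}}{\mu_{\sfP}}\sfP_i,\ \sfH_i-\tfrac{\mu_{\sfH}}{\mu_{\sfP}}\sfP_i)$. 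These have finite second moments by Step 2 with $j=2$. Writing $n=S^{\sfP}_{\gk_n}+O_\pr(1)$ then gives covariance $\Sigma_\ell$. Positivity of the variances follows because $\sfT_1-c\sfP_1$ is non-degenerate: conditional on $\sfP_1$, $\sfT_1$ varies with a non-deterministic edge weight.

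The main obstacle is Step 2 in the heavy-tailed case, namely getting the sharp threshold $j/\lc$ uniformly despite the random block length. This is where Proposition~\ref{prop:k-moment} is used.
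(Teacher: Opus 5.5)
Your proposal is correct and follows essentially the same route as the paper: i.i.d.\ pivot blocks, block moments from Proposition~\ref{prop:k-moment}, tight endpoint residuals, Borel--Cantelli for the a.s.\ hop-count LLN, and the renewal--reward CLT via Anscombe's theorem and Cram\'er--Wold. The remaining points are minor. Under Condition~B.\ref{ass:B2}, the block moments come from the bound $\sfT_1\le 2\lceil\sfP_1/\ell\rceil$ in Lemma~\ref{lem:B2-pivot-env}, since Proposition~\ref{prop:k-moment} assumes B.\ref{ass:B1}. The $O(1)$ error in $\E H_n$ needs a Lorden-type bound using $\E\sfP_1^2<\infty$, not just the elementary renewal theorem. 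Finally, neither your per-cut heuristic in Step~2 nor the positivity remark is needed here; the heuristic would in fact lose the $\lc$ exponent, because gluing at non-pivot vertices costs single edges.
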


\begin{rem}[Centering constants and moment conditions]
The joint convergence statement in Theorem~\ref{thm:LLN-CLT} depends subtly on whether Condition~\textup{B.\ref{ass:B1}} or Condition~\textup{B.\ref{ass:B2}} is assumed. Under Condition~\textup{B.\ref{ass:B1}}, the exact expectation $\E T_n$ might not exist, so we center $T_n$ by the deterministic linear term $n(\mu_{\sfT}/\mu_{\sfP})$ instead of $\E T_n$. On the other hand, under Condition~\textup{B.\ref{ass:B2}}, the endpoint residual passage time is uniformly bounded in $L^q$ for any $q \ge 1$ by Lemma~\ref{lem:end-residual}. This guarantees almost sure convergence for the law of large numbers and allows us to equivalently state the central limit theorem using the exact centering $T_n - \E T_n$.
\end{rem}

The proof of Theorem~\ref{thm:LLN-CLT} proceeds in three straightforward steps. First, we establish the foundations for the pivot decomposition and the resulting i.i.d.~renewal structure (Lemmas~\ref{lem:B1-pivot-env} and~\ref{lem:B2-pivot-env}, with proofs deferred to Section~\ref{sec:aux}. Next, Proposition~\ref{prop:k-moment} identifies the exact integrability thresholds for the block rewards $\sfT_1$. Finally, Lemma~\ref{lem:end-residual} controls the boundary residual terms ($\sfRT_n$ and $\sfRH_n$), which directly allows us to apply the standard renewal-reward limits (Lemma~\ref{lem:renewal-reward-clt}) to conclude the proof.

To initialize this regenerative structure, we first verify that the pivot nodes occur frequently enough. Under Condition~B.\ref{ass:B1}, we define the probability of a node being an ordinary pivot as
\begin{align}\label{eq:B1-pivot-prob}
    p_{a,\ell} := \pr(0 \text{ is a pivot node}).
\end{align}
The pivot condition~\eqref{def:pivot} requires $2(\ell-1)$ incident edges adjacent to $0$ to be lighter than $a$, and the $\ell(\ell-1)/2$ edges crossing over vertex $0$ to be heavier than $2a$. By the independence of the edge weights and Condition~B.\ref{ass:B1}, we have 
\begin{align*}
    p_{a,\ell} = \pr(\go < a)^{2(\ell-1)} \pr(\go > 2a)^{\ell(\ell-1)/2} > 0.
\end{align*}
Therefore, ordinary pivot nodes occur infinitely often along the line. This allows us to partition the graph into independent and identically distributed blocks, as formalized in the following lemma.

\begin{lem}[Pivot regeneration blocks]\label{lem:B1-pivot-env}
Fix $\ell\ge2$ and let $a>0$ satisfy
Condition~\textup{B.\ref{ass:B1}}.  Let
$
    0<\rho_1<\rho_2<\rho_3<\cdots
$
be the ordinary pivot nodes, and let $\gk_n$ be as in
\eqref{eq:kappa-def}.  We use the interval edge set $\cE[x,y]$ and
the interval environment $\Env[x,y]$ defined in
\eqref{eq:interval-env-notation}, and the block notation
\eqref{eq:block-variable-notation}.  Then the following holds.

\begin{enumeratei}
\item \label{piv-i}\textbf{Pivot renewal-block localization.}
If $\rho_i<\rho_j$ are two ordinary pivot nodes, then there exists a minimum-hop geodesic from $\rho_i$ to $\rho_j$ using only edges in
$\cE[\rho_i,\rho_j]$.  Consequently,
$
    T(\rho_i,\rho_j)
    \text{ and }
    H(\rho_i,\rho_j)
$
are measurable with respect to $\Env[\rho_i,\rho_j]$.

\item \label{piv-ii}\textbf{Renewal-block i.i.d.~structure.}
The block sequence
\begin{align}\label{eq:pivot-iid-str}
    \left\{
        \left(
            \sfP_i,\,
            \Env[\rho_i,\rho_{i+1}]
        \right)
    \right\}_{i\ge1}
\end{align}
is i.i.d.~In particular,
$
    \{(\sfP_i,\sfT_i,\sfH_i)\}_{i\ge1}
$
is i.i.d.

\item \label{piv-iii}\textbf{Geometric domination of the gaps.}
Ordinary pivots occur infinitely often almost surely, and if
$
    \fs_\ell:=2\ell-1,
$
then
\begin{align}\label{eq:pivot-gap-domin}
    \sfP_i
    \preceq
    \fs_\ell\,\mathrm{Geom}(p_{a,\ell}),
    \quad i\ge1.
\end{align}
In particular, $\sfP_1$ has finite moments of all orders.  Consequently,
\begin{align}\label{eq:pivot-renewal-slln}
    \frac{\gk_n}{n}
    \to
    \frac{1}{\E\sfP_1}
    \quad\text{a.s.}
\end{align}
\end{enumeratei}
\end{lem}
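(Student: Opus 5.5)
For part~\eqref{piv-i}, I would take any minimum-hop geodesic $\pi$ from $\rho_i$ to $\rho_j$ and show that it can be replaced by one inside $\cE[\rho_i,\rho_j]$ that is no longer and has no more edges. First I would argue that $\pi$ never jumps across $\rho_i$ or $\rho_j$. Suppose $\pi$ uses an edge $(x,y)$ with $x<\rho_i<y$. Since $\rho_i$ is a pivot, $\go_{x,\rho_i}+\go_{\rho_i,y}<2a<\go_{x,y}$. Replacing $(x,y)$ by the two-edge detour through $\rho_i$ and then removing the loop that now passes through $\rho_i$ twice gives a strictly cheaper walk from $\rho_i$ to $\rho_j$, a contradiction. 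The same argument applies at $\rho_j$, which is the argument of Lemma~\ref{lem:pivot}. So every edge of $\pi$ has both endpoints on the same side of $\rho_i$ and on the same side of $\rho_j$. Because $\pi$ starts at $\rho_i$ and cannot cross it, it can only leave $[\rho_i,\rho_j]$ through an excursion that exits and returns via the vertex $\rho_i$ itself, or via $\rho_j$. Such an excursion is a closed loop at $\rho_i$ (or at $\rho_j$) of nonnegative weight. Deleting it does not increase the weight and strictly decreases the hop count, which contradicts minimality of hops. Hence $\pi\subseteq\cE[\rho_i,\rho_j]$. It follows that $T(\rho_i,\rho_j)=T_{[\rho_i,\rho_j]}(\rho_i,\rho_j)$, and likewise for $H$, so both are functions of $\Env[\rho_i,\rho_j]$.

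For part~\eqref{piv-ii}, I would exploit the local, two-sided nature of the pivot event. Whether $v$ is a pivot depends only on edges in $\cE[v-\ell,v+\ell]$ that touch or cross $v$. The point to handle carefully is that $\Env[\rho_i,\rho_{i+1}]$ and the event $\{\rho_{i+1}\text{ is the next pivot}\}$ overlap with the next block: the edges from $\rho_{i+1}$ to its right are used in defining the pivot. The key observation is that $\Env[\rho_i,\rho_{i+1}]$ contains only edges with both endpoints in $[\rho_i,\rho_{i+1}]$. The edges that cross or touch $\rho_{i+1}$ from the right lie outside it, except that their constraints are only of the form "light" or "heavy". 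I would then take a conditioning route in the style of Foss--Zachary. Condition on the event that $v$ is a pivot. Then the weights on edges entirely left of $v$, on edges entirely right of $v$, and on the edges constrained by the pivot condition are independent, and the constrained edges have product conditional laws. Splitting the constrained edges into right-incident and left-incident groups, I would show inductively, by a strong-Markov-type decomposition over the deterministic values of the positions $\rho_1=r_1,\dots,\rho_m=r_m$, that the joint law factorizes into a product of identical block laws.

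Concretely, the event $\{\rho_1=r_1,\dots,\rho_{m+1}=r_{m+1}\}$ means that each $r_j$ is a pivot and no vertex strictly between consecutive $r_j$ is a pivot. Each "no pivot in $(r_j,r_{j+1})$" event depends on the edges of $\cE[r_j,r_{j+1}]$, together with the constrained edges near $r_j$ and $r_{j+1}$, which are fixed by the pivot events. A subtlety arises here: a vertex $u$ near $r_j$ may have pivot status depending on edges crossing $r_j$. Those edges are heavy, and any vertex $u$ with $|u-r_j|<\ell$, $u\neq r_j$, cannot be a pivot because the edge $(u,r_j)$ or a crossing edge violates the condition. In fact, a pivot at $u$ requires $\go_{x,y}>2a$ for edges over $u$, while edges incident to $r_j$ are $<a$. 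So within distance $\ell-1$ of a pivot there are no other pivots, and the "no pivot" event reduces to conditions on $\cE[r_j,r_{j+1}]$ alone, given the pivot events. Making this reduction precise, and thereby obtaining exact factorization, is the main obstacle. If the reduction fails at the boundary, I would instead enlarge each block to include the constrained boundary edges and verify that the overlaps are all deterministic conditions, which are independent of everything else.

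For part~\eqref{piv-iii}, I would choose the spacing $\fs_\ell=2\ell-1$, so that the pivot events at vertices $j\fs_\ell$ depend on disjoint sets of edges. These events are therefore i.i.d.\ with probability $p_{a,\ell}$ from \eqref{eq:B1-pivot-prob}. Pivots then occur infinitely often, and the next pivot after $\rho_i$ lies within $\fs_\ell\,\mathrm{Geom}(p_{a,\ell})$. To keep this domination valid conditionally on the past, I would run the argument from a fresh window just to the right of $\rho_i+\ell$, again using independence via part~\eqref{piv-ii}. Finite moments of all orders follow directly. Finally, \eqref{eq:pivot-renewal-slln} follows from the renewal SLLN for $S_m^{\sfP}$, together with $\rho_1<\infty$ almost surely.
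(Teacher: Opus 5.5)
Your proposal is correct and follows essentially the paper's route. In (i) you use the two-edge detour through a pivot plus loop deletion; taking a minimum-hop geodesic correctly handles zero-weight cycles, which the paper's ``a cycle would increase the weight'' glosses over. In (ii) you split each pivot event into left-incident, right-incident and crossing constraints on disjoint edge sets and factorize over block positions, which is a more explicit version of the paper's exploration argument. In (iii) you use the disjoint trial windows at spacing $\fs_\ell=2\ell-1$.

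One correction: two pivots can sit at distance exactly $\ell-1$ (the paper uses $(\sfP_1,\sfH_1)=(\ell-1,1)$ in Proposition~\ref{prop:pos-var}). So a pivot at $r_j$ only excludes $r_j+1,\dots,r_j+\ell-2$, not everything within distance $\ell-1$. Your reduction of the no-pivot event to $\Env[r_j,r_{j+1}]$ still holds, but the reason is that for $u\ge r_j+\ell-1$ the event ``$u$ is a pivot'' depends only on edges in $\cE[u-\ell+1,u+\ell-1]$ (the larger window $\cE[u-\ell,u+\ell]$ you wrote would not suffice). Also include ``no pivot in $[0,r_1)$'' in $\{\rho_1=r_1\}$; on the pivot event at $r_1$ this depends only on edges with both endpoints at most $r_1$, so the first block has the same law as the later ones.
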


Analogously, under Condition~B.\ref{ass:B2}, we set the probability of a generalized pivot as
\begin{align*}
    p_{\mathrm{gen},\ell}
    :=
    \pr(0 \text{ is a generalized pivot node}).
\end{align*}
As in the derivation of~\eqref{eq:B1-pivot-prob}, the definition of a generalized pivot involves only finitely many independent edges satisfying specified light and heavy inequalities, each of which occurs with positive probability by~\eqref{eq:B2-light-heavy-positive}. Since an edge incident to a vertex in the pivot chain cannot strictly cross another vertex in the same chain, no edge is required to be both light and heavy.  
Hence $p_{\mathrm{gen},\ell}>0$, and we obtain the analogous renewal structure below.

\begin{lem}[Generalized pivot regeneration blocks]\label{lem:B2-pivot-env}
Let
$
    0<\rho_1<\rho_2<\rho_3<\cdots
$
be the generalized pivots, and let $\gk_n$ be as in~\eqref{eq:kappa-def}.  We use the interval edge set $\cE[x,y]$ and the interval environment $\Env[x,y]$ defined in~\eqref{eq:interval-env-notation}, and the block notation~\eqref{eq:block-variable-notation}. Then the following holds.

\begin{enumeratei}
\item\label{gen-piv-i} \textbf{Generalized pivot renewal-block localization.}
If $\rho_i<\rho_j$ are two generalized pivots, then there
exists a minimum-hop geodesic from $\rho_i$ to $\rho_j$ using only
edges in $\cE[\rho_i,\rho_j]$.  Consequently,
$T(\rho_i,\rho_j)\text{ and }H(\rho_i,\rho_j)
$ are measurable with respect to $\Env[\rho_i,\rho_j]$.

\item\label{gen-piv-ii} \textbf{Renewal-block i.i.d.~structure.}
The block sequence
\begin{align}\label{eq:B2-env-iid}
    \left\{
        \left(
            \sfP_i,\,
            \Env[\rho_i,\rho_{i+1}]
        \right)
    \right\}_{i\ge1}
\end{align}
is i.i.d.~In particular,
$
    \{(\sfP_i,\sfT_i,\sfH_i)\}_{i\ge1}
$
is i.i.d.

\item\label{gen-piv-iii} \textbf{Geometric domination of the gaps.}
Generalized pivots occur infinitely often almost surely, and if
$
    \fs_{\ell,k}:=(k+1)\ell-1,
$
then
\begin{align}\label{eq:B2-gap-geom-kpivot}
    \sfP_i
    \preceq
    \fs_{\ell,k}\,\mathrm{Geom}(p_{\mathrm{gen},\ell}),
    \quad i\ge1.
\end{align}
In particular, $\sfP_1$ has finite moments of all orders.  Consequently,
\begin{align}\label{eq:B2-pivot-renewal-slln}
    \frac{\gk_n}{n}
    \to
    \frac{1}{\E\sfP_1}
    \quad\text{a.s.}
\end{align}

\item\label{gen-piv-iv} \textbf{Deterministic bounds for bounded weights.}
All moments of $\sfT_i$ and $\sfH_i$ are finite, and
\begin{align}\label{eq:B2-block-bounds-kpivot}
    \left\lceil\frac{\sfP_i}{\ell}\right\rceil
    \le
    \sfH_i
    \le
    \sfT_i
    \le
    2\left\lceil\frac{\sfP_i}{\ell}\right\rceil,
    \qquad
    \sfH_i\le \sfP_i,
    \quad i\ge1.
\end{align}
\end{enumeratei}
\end{lem}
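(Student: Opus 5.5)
We prove the four parts of Lemma~\ref{lem:B2-pivot-env} in order. Parts (i)--(iii) follow the proof of Lemma~\ref{lem:B1-pivot-env}, with the single pivot vertex replaced by the chain $S_x$; part (iv) is a separate deterministic argument.

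\textbf{Part (i): localization.} We will show that among geodesics from $\rho_i$ to $\rho_j$ there is a minimum-hop one that never leaves $[\rho_i,\rho_j]$. Take any geodesic $\pi$ from $\rho_i$ to $\rho_j$ and suppose it visits a vertex $z<\rho_i$.
\begin{itemize}
\item By Lemma~\ref{lem:B2-kpivot-forcing}, applied to the subpath from $z$ to $\rho_j$ (note $\rho_j>\rho_i+(k-1)\ell$ once $\rho_j$ is far enough), that subpath passes through $\rho_i$.
\item So $\pi$ contains a loop returning to $\rho_i$. Edge weights are at least $1$, so removing the loop strictly decreases the weight, contradicting that $\pi$ is a geodesic.
\end{itemize}
Excursions to the right of $\rho_j$ are ruled out symmetrically, using the chain of $\rho_j$ read from its right end. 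Hence every geodesic from $\rho_i$ to $\rho_j$ stays in $[\rho_i,\rho_j]$.

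If consecutive chains overlap or nearly overlap (for example $\rho_{i+1}-\rho_i<(k-1)\ell$), the forcing lemma does not apply directly. In that case we argue that the forcing argument of Lemma~\ref{lem:B2-kpivot-forcing} still applies to the union of the two chains. Once all geodesics are confined to $[\rho_i,\rho_j]$, $T(\rho_i,\rho_j)$ and $H(\rho_i,\rho_j)$ are functions of $\Env[\rho_i,\rho_j]$.

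\textbf{Part (ii): the i.i.d.\ structure.} The pivot event at $x$ depends on a finite set of edges within distance $O(k\ell)$ of $x$. This is a two-sided condition, so we cannot use a plain stopping-time argument. Instead we use the following strategy.
\begin{itemize}
\item Condition on the event that $x$ is a pivot and $y>x$ is the next pivot.
\item Decompose the environment into: the edges inside $[x,y]$; the edges crossing or incident to the chain at $x$, whose law is prescribed by the light/heavy constraints; and the edges crossing or incident to the chain at $y$, with the same prescribed law.
\item The block environment together with the gap $y-x$ is then a function of the edges of $\cE[x,y]$, subject to the constraint that no intermediate vertex is a pivot.
\end{itemize}
By translation invariance, the law of this block does not depend on $x$. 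Conditional independence across blocks follows because distinct blocks share only pivot-constraint edges, whose weights are fixed into prescribed intervals and independent of everything else. This is the Foss--Zachary-type regeneration argument. \textbf{This step is the main obstacle.} The difficulty is making the conditioning rigorous for edges that cross a pivot chain and belong to neither block's $\cE$. I would first try to show these edges are irrelevant to every block functional, since by part (i) no geodesic uses them, and then write the joint law explicitly as a product over blocks.

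\textbf{Part (iii): gap domination.} Partition $\dZ$ into disjoint windows of length $\fs_{\ell,k}=(k+1)\ell-1$. The pivot events at the designated left points of these windows depend on disjoint edge sets, so they are independent, each with probability $p_{\mathrm{gen},\ell}$. This gives the geometric domination~\eqref{eq:B2-gap-geom-kpivot}. The strong law~\eqref{eq:B2-pivot-renewal-slln} then follows from the renewal SLLN.

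\textbf{Part (iv): deterministic bounds.} All weights lie in $[1,2]$, so we have the following inequalities.
\begin{itemize}
\item Any path covering distance $\sfP_i$ with steps of length at most $\ell$ needs at least $\lceil \sfP_i/\ell\rceil$ edges.
\item The lower bound $\sfH_i\le\sfT_i$ holds because each edge weighs at least $1$.
\item The straight path with $\lceil \sfP_i/\ell\rceil$ forward steps has weight at most $2\lceil \sfP_i/\ell\rceil$, which bounds $\sfT_i$.
\item $\sfH_i\le \sfP_i$: a minimum-hop geodesic makes no backward moves, since by the confinement in part (i) a backward move could be shortcut.
\end{itemize}
Finiteness of all moments of $\sfT_i$ and $\sfH_i$ then follows from part (iii), since both are bounded by $2\sfP_i$.
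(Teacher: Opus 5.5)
\textbf{Gap in part (i).} Your part (i) relies on Lemma~\ref{lem:B2-kpivot-forcing}, which needs the endpoint to lie strictly beyond the whole chain, $w>x+(k-1)\ell$. Two cases escape this.

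(a) Consecutive generalized pivots can be close. The vertices $0$ and $\ell$ are simultaneously generalized pivots with positive probability; this is used in the proof of Proposition~\ref{prop:pos-var}. So $\rho_{i+1}-\rho_i=\ell\le(k-1)\ell$ occurs, and a left excursion below $\rho_i$ is then not excluded. Your remedy, the ``union of the two chains'', does not help as stated: the target $\rho_j$ becomes an interior point of the union chain, so the forcing lemma's hypothesis fails again.

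(b) The right end is not symmetric to the left end. The chain of $\rho_j$ lies to the right of $\rho_j$, outside the block. An excursion such as $\rho_j-1\to\rho_j+1\to\rho_j$ never goes beyond $\rho_j+(k-1)\ell$ and creates no loop. So neither the forcing lemma (in either orientation) nor loop removal rules it out, even when $\rho_j-\rho_i$ is large.

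\textbf{The missing idea.} Localization is a purely local property of the pivot vertex itself, and the forcing lemma is not needed. Suppose a geodesic $(v_0,\dots,v_m)$ from $r=\rho_i$ to $s=\rho_j$ goes below $r$, and let $q$ be the first later index with $v_q\ge r$.
\begin{itemize}
\item Since weights are at least $1$, a geodesic has no repeated vertices, so $v_{q-1}<r<v_q$. Hence $(v_{q-1},v_q)$ crosses over $r$ and is heavy.
\item Since $0<v_q-r<\ell$, the pair $(r,v_q)$ is an edge incident to $S_r$ and is not one of the two exceptional edges. Hence it is light.
\item The segment from $r$ to $v_q$ has at least two edges, so its weight exceeds $1+(1+2/k)>1+1/(k+1)$. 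Replacing it by $(r,v_q)$ strictly lowers the weight, a contradiction.
\end{itemize}
At $s$ the argument is the same, using the first edge $(v_{p-1},v_p)$ that jumps over $s$ and the light edge $(v_{p-1},s)$. This is the paper's proof, and it does not depend on how far apart $\rho_i$ and $\rho_j$ are.

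\textbf{Smaller points.} In (iv), a minimum-hop geodesic does have no backward moves here, but the reason is the bound $1\le\omega\le 2$: two consecutive edges cost at least as much as the single shortcut edge. It is not a consequence of confinement. The paper's route is simpler: a minimum-hop geodesic confined to $[\rho_i,\rho_{i+1}]$ is simple, so it has at most $\sfP_i$ edges. Your part (ii) remains a plan. The paper handles it, tersely, by rerunning the window-exploration argument from the ordinary-pivot case with window $I_x=[x-\ell+1,x+k\ell-1]$.
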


While the geometric constraints and probabilistic arguments underlying Lemmas~\ref{lem:B1-pivot-env} and~\ref{lem:B2-pivot-env} are intuitively straightforward, verifying the strict independence of the blocks requires carefully tracking the explored edge environments. To avoid interrupting the main analysis, we defer the rigorous proofs of these two structural lemmas to Section~\ref{sec:aux}.

Having established the exact i.i.d.~renewal structure for the sequence $\{(\sfP_i, \sfT_i, \sfH_i)\}_{i\ge1}$, we next address the integrability of these block variables. While the gap length $\sfP_1$ and the hop count $\sfH_1$ have finite moments of all orders due to their geometric domination by Lemmas~\ref{lem:B1-pivot-env} and~\ref{lem:B2-pivot-env}, the block passage time $\sfT_1$ is governed by the tail behavior of the edge weights. We now determine the necessary and sufficient condition for the existence of the $k$-th moment of $\sfT_1$.

\begin{prop}\label{prop:k-moment}
Let $k\in\dN$ and assume Condition~B.\ref{ass:B1}. Recall from~\eqref{eq:block-variable-notation} that $\sfT_1:=T(\rho_1,\rho_2)$.
\begin{enumeratei}
\item If Assumption~A.\ref{ass:A2} holds with $0<\gc<k/\lc$, then
$
\E \sfT_1^k=\infty.
$
\item If either Assumption~A.\ref{ass:A2} holds with $\gc>k/\lc$, or $\E\go^k<\infty$, then
$
\E \sfT_1^k<\infty.
$
\end{enumeratei}
\end{prop}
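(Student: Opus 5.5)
We prove both parts by comparing $\sfT_1$ with a finite collection of edge-disjoint paths crossing the block. The key geometric fact is that any path from $\rho_1$ to $\rho_2$ must cross each unit cut $(u,u+1)$, $\rho_1\le u<\rho_2$. The cut set $C_u:=\{(x,y)\in\cE^{(\ell)}:x\le u<y\}$ has exactly $\lc$ edges. Conversely, we take for granted the path-structure statement referred to after~\eqref{def:ell_c} (Lemma~\ref{lem:path_structure}): between two windows of length $\ell$ there are $\lc$ edge-disjoint paths, each of bounded length.

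\emph{Lower bound, part (i).} Condition on the event that $\rho_1=0$ and $\rho_2$ equals some specific value; since $\sfP_1\ge1$, take the smallest admissible configuration. Pick a cut $C_u$ strictly inside the block whose edges are not among those constrained by the pivot conditions at $\rho_1,\rho_2$. Such a cut exists once the block is long enough, say $\sfP_1\ge 2\ell+1$, and this event has positive probability. The pivot events only require certain edges to be light or heavy. So conditionally on this positive-probability configuration, the $\lc$ weights on $C_u$ are independent. Each is either unconstrained, or restricted to $\{\go>2a\}$, which only makes it heavier. Since every path crosses $C_u$, we get $\sfT_1\ge\min_{e\in C_u}\go_e$. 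Hence
\[
\pr(\sfT_1>x)\ge c\,\bar F(x)^{\lc}=c\,x^{-\gc\lc}L(x)^{\lc}.
\]
If $\gc\lc<k$, then $\int x^{k-1}\pr(\sfT_1>x)\,dx=\infty$, so $\E\sfT_1^k=\infty$. The subtle point is the conditioning: the event $\{\rho_1=0,\rho_2=m\}$ also requires that no vertex in $(0,m)$ is a pivot. This condition is increasing in the cut weights for vertices near $u$, since heavier crossing edges are needed for a pivot but light incident edges are needed too, so the condition is not monotone. To avoid this, I would instead lower bound $\pr(\sfT_1>x)$ by choosing an explicit configuration. Fix $m$ and prescribe, on the window around the cut, weights that make no interior vertex a pivot regardless of how large the cut weights are. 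For example, make all edges incident to cut-adjacent vertices take values in $\{\go\ge a\}$ where possible; note $\pr(\go\ge a)>0$ by B.\ref{ass:B1}. Then intersect with $\{\min_{C_u}\go>x\}$. Checking that large cut weights never create a pivot is where care is needed: a vertex $v$ is a pivot only if its incident edges are $<a$, and the cut edges incident to $v$ being $>x>a$ kill that.

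\emph{Upper bound, part (ii).} Write $\sfT_1\le \sum_{u=\rho_1}^{\rho_2-1}\tau_u$. Here $\tau_u$ is the minimum over the $\lc$ edge-disjoint bounded-length paths linking a window at $u$ to a window at $u+\ell$, taken along a fixed deterministic skeleton, e.g.\ through vertices $u_j=\rho_1+j\ell$. A cleaner route is to use $\lc$ edge-disjoint paths from $\rho_1$ to $\rho_2$ directly; these exist by the cut count and Menger's theorem, with lengths $O(\sfP_1)$ and each edge used boundedly often. Then $\sfT_1\le\min_{j\le\lc}W_j$, where $W_j$ is the weight of path $j$, and each $W_j$ is a sum of at most $C\sfP_1$ weights. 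The main obstacle is dependence between these weights and the pivot event, since the pivot conditions bias the weights. The biasing only restricts weights to $\{\go<a\}$, which is harmless, or to $\{\go>2a\}$, which at most multiplies tails by the constant $1/\pr(\go>2a)$. For the tail of the block length I would use the geometric domination \eqref{eq:pivot-gap-domin}. Conditioning on $\sfP_1=m$ costs a factor of at most $1/\pr(\sfP_1=m)$; to avoid that, I would instead sum over $m$ with the bound $\pr(\sfP_1\ge m)\le C e^{-cm}$ and use unconditional path sums over all $m$ with a union bound. This yields
\[
\pr(\sfT_1>x)\le\sum_m Ce^{-cm}\,\big(C m\,\bar F(x/(Cm))\big)^{\lc}.
\]
When $\gc\lc>k$, integrating against $x^{k-1}$ gives finiteness, using Potter bounds on $L$. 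When $\E\go^k<\infty$, the estimate $\E W_j^k\le (Cm)^{k}\E\go^k$ suffices even for a single path.
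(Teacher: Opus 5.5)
\textbf{Part (i)} is essentially the paper's argument, and your fix for the monotonicity problem is the right one. Heavy cut weights kill every pivot within distance $\ell-1$ of the cut, and pivot events farther away involve no cut edges. The paper makes this clean by taking $\rho_2-\rho_1=\fs_\ell=2\ell-1$ and the cut $\cC_{\ell-1}$. There the two endpoint pivot events already exclude pivots at $1,\dots,\ell-2$ and $\ell+1,\dots,2\ell-2$, and $\go_{\ell-1,\ell}>2a$ excludes $\ell-1$ and $\ell$.

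\textbf{Part (ii) has a genuine gap.} Menger's theorem does not give $\lc$ edge-disjoint paths from $\rho_1$ to $\rho_2$. Each such path starts with an edge incident to $\rho_1$. There are only $\ell<\lc$ of these inside the block, and only $2\ell$ in the whole line, with $2\ell<\lc$ once $\ell\ge4$. Allowing edges to be shared ``boundedly often'' does not rescue the product bound. The $W_j$ become dependent, and one heavy edge shared by all paths already forces $\pr(\min_j W_j>x)\ge\bar F(x)$, so the exponent $\lc$ is lost. Your first route fails for the same reason: consecutive window-to-window pieces must be joined by single edges, whose tail is only $\bar F(x)$.

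The missing idea is to decide which edges may be shared, namely only those the pivot definition forces to be light:
\begin{itemize}
\item Take the genuinely edge-disjoint paths of Lemma~\ref{lem:path_structure} from the window $\fL_0=\{0,\dots,\ell-1\}$ to $\fR_j$.
\item Attach them to the pivots through the edges $(\rho_1,\rho_1+i)$ and $(\rho_2-i,\rho_2)$, $1\le i\le\ell-1$, which the pivot condition forces to have weight $<a$.
\item On $\{\rho_2-\rho_1=j\}$ this gives $\sfT_1\le 2a+\min_r W^{(j)}_r$, with the $W^{(j)}_r$ independent and each a sum of at most $j$ weights.
\end{itemize}
This is exactly where the light-incident-edge part of the pivot definition enters part (ii).

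\textbf{Two smaller points.}

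First, your displayed bound $\sum_m Ce^{-cm}(\cdots)^{\lc}$ treats $\{\sfP_1=m\}$ as independent of the path weights. It is not, because the no-intermediate-pivot event is not monotone. Truncating at $m\le B\log x$ and using a union bound gives $\sum_m\min\{Ce^{-cm},(\cdots)^{\lc}\}$, which suffices when $\gc\lc>k$. The paper decouples instead via H\"older: $\|\sfT_1\ind_{\{\sfP_1=j\}}\|_k\le\|Z_j\|_{k+\eps}\,\pr(\sfP_1=j)^{\eps/(k(k+\eps))}$.

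Second, for $\E\go^k<\infty$ your single-path claim is not supported by either decoupling. Markov gives only $\pr(W^{(m)}>x)\lesssim m^k x^{-k}$, which against $x^{k-1}$ yields a logarithmic divergence after summing over $m\le B\log x$. H\"older needs $k+\eps$ moments. A single path would require a finer decoupling. The paper simply reuses the $\lc$ disjoint paths with $\bar F(y)\le y^{-k}\E\go^k$, giving $\pr(Z_j>y)\lesssim j^{C}y^{-k\lc}$, which leaves room because $\lc\ge3$.
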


\begin{figure}[htbp]
\centering
\begin{tikzpicture}[
    x=1.45cm,y=1cm,>=Latex,
    line cap=round,line join=round,
    cutedge/.style={black, dotted, line width=1pt},
    cutline/.style={black, line width=1pt},
    vtx/.style={circle,fill=black,inner sep=1.7pt}
]
    % Illustration for \ell=3:
    % vertices u-2, u-1, u, u+1, u+2, u+3
    \coordinate (a0) at (0,0);
    \coordinate (a1) at (1,0);
    \coordinate (a2) at (2,0);
    \coordinate (a3) at (3,0);
    \coordinate (a4) at (4,0);
    \coordinate (a5) at (5,0);
    % baseline
    \draw[black!70] (-0.25,0) -- (5.25,0);
    % blue edges in C_u for \ell=3
    \draw[cutedge] (a2) -- (a3); % (u, u+1)
    % upper arcs: use out/in angles for a more natural curvature
    \draw[cutedge] (a1) to[out=58,in=122] (a3); % (u-1, u+1)
    \draw[cutedge] (a0) to[out=60,in=120] (a3); % (u-2, u+1)
    \draw[cutedge] (a1) to[out=50,in=130] (a4); % (u-1, u+2)

    % lower arcs
    \draw[cutedge] (a2) to[out=58,in=122] (a4); % (u, u+2)
    \draw[cutedge] (a2) to[out=50,in=130] (a5); % (u, u+3)

    % red vertical cut at u+1/2
    \draw[cutline] (2.5,-.50) -- (2.5,1.25);

    % vertices
    \foreach \p in {a0,a1,a2,a3,a4,a5}{
        \node[vtx] at (\p) {};
    }

    % labels
    \node[below=4pt] at (a0) {$u-2$};
    \node[below=4pt] at (a1) {$u-1$};
    \node[below=4pt] at (a2) {$u$};
    \node[below=4pt] at (a3) {$u+1$};
    \node[below=4pt] at (a4) {$u+2$};
    \node[below=4pt] at (a5) {$u+3$};
\end{tikzpicture}
\caption{Illustration of the unit cut $\cC_u$ in the case $\ell=3$ with $\abs{\cC_u}=\lc=6$. The dotted edges are precisely the edges in $\cC_u$, namely the edges crossing the boundary between $u$ and $u+1$. The vertical segment indicates the cut between the vertices $u,u+1$. Hence $X_u=\min_{e\in\cC_u}\go_e$ is the minimum weight among these dotted edges.}
\label{fig:unit-cut-Cu}
\end{figure}

To prove Proposition~\ref{prop:k-moment}, we analyze the combinatorial path structure of the $\ell$-spread-out line graph. A fundamental structural feature is the \emph{unit cut} separating the left and right sides of any vertex $u$, defined as the set of all edges crossing the boundary between $u$ and $u+1$:
\begin{align}\label{def:unit_cut}
    \cC_u := \{(x,y) \in \cE^{(\ell)} : x \le u < y\}.
\end{align}

Recall from~\eqref{def:ell_c} that $\lc=\ell(\ell+1)/2$. As illustrated in Figure~\ref{fig:unit-cut-Cu}, exactly $|\cC_u| = \sum_{d=1}^{\ell} d = \lc$ edges cross any such unit cut. 
This implies that any family of edge-disjoint paths traversing a renewal block can contain at most $\lc$ paths. As we will demonstrate, this upper bound is strictly achievable. This intrinsic network bottleneck is the fundamental reason why the integrability threshold for the block passage time $\sfT_1$ is precisely $k/\lc$. The following lemma formalizes this path structure, and a schematic construction is given in Figure~\ref{fig:path_structure}.

\begin{lem}\label{lem:path_structure}
Let $\rho_1<\rho_2$ be consecutive pivot nodes and set $j:=\rho_2-\rho_1$.
The following statements hold.
\begin{enumeratei}
\item If $j\le 2\ell-2$, then $T(\rho_1,\rho_2)<2a$, where $a$ is the fixed constant satisfying Condition~\textnormal{B.\ref{ass:B1}}.
\item If $j\ge 2\ell-1$, then there exist $\lc=\ell(\ell+1)/2$ paths from $0$ to $j$ such that, after deleting their first and last edges, the remaining edge sets are pairwise disjoint. Moreover, each such truncated path uses at most $j$ edges.
\end{enumeratei}
\end{lem}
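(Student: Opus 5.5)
For part (i), I would use only the pivot condition~\eqref{def:pivot} at the two endpoints. Reading Definition~\ref{defn:B1-pivot} with $x<v<y$ and $y-x\le\ell$, every edge $(v,z)$ with $1\le |z-v|\le \ell-1$ is forced to have weight $<a$, since such a $z$ can always be completed by a vertex on the other side of $v$ at distance $1$ to form a pair $x<v<y$ of span at most $\ell$. Now set $j=\rho_2-\rho_1\le 2\ell-2$.
\begin{itemize}
\item If $j\le \ell-1$, the single edge $(\rho_1,\rho_2)$ already has weight $<a<2a$.
\item If $\ell\le j\le 2\ell-2$, I would pick an intermediate vertex $m$ with $1\le m-\rho_1\le \ell-1$ and $1\le \rho_2-m\le \ell-1$, for instance $m=\rho_1+\lceil j/2\rceil$. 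Such an $m$ exists precisely because $j\le 2(\ell-1)$.
\end{itemize}
In the second case, $(\rho_1,m)$ is light by the pivot property of $\rho_1$ and $(m,\rho_2)$ is light by the pivot property of $\rho_2$. Hence $T(\rho_1,\rho_2)\le \go_{\rho_1,m}+\go_{m,\rho_2}<2a$.

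For part (ii), I would shift so that $\rho_1=0$ and build one path for each edge of the unit cut. Index the $\lc$ paths by pairs $(d,r)$ with $d\in\{1,\dots,\ell\}$ and $r\in\{0,\dots,d-1\}$; there are $\sum_d d=\lc$ such pairs. The path $\pi_{d,r}$ is built as follows.
\begin{itemize}
\item Let $s=s_{d,r}$ be the unique element of $\{1,\dots,d\}$ congruent to $r$ mod $d$, and let $t=t_{d,r}$ be the largest integer $\le j-1$ congruent to $r$ mod $d$.
\item Define $\pi_{d,r}$ as $0\to s\to s+d\to\cdots\to t\to j$.
\end{itemize}
The first edge has length $s\le d\le\ell$ and the last has length $j-t\in[1,d]$, so both are genuine edges of $\cE^{(\ell)}$. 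The hypothesis $j\ge 2\ell-1$ is used exactly here: it gives $t\ge j-d\ge \ell-1>s-d$, and since $t\equiv s \pmod d$ this forces $t\ge s$. So the arithmetic progression from $s$ to $t$ is well defined and stays inside $[1,j-1]$.

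After deleting the first and last edges, the truncated path $\pi_{d,r}$ consists only of edges of length exactly $d$ whose endpoints are congruent to $r$ mod $d$. Conversely, an edge $(u,u+d)$ determines its length $d$ and its residue $u \bmod d$. Therefore truncated paths with distinct indices $(d,r)$ are pairwise edge-disjoint. The truncated path has $(t-s)/d\le j$ edges, which gives the edge-count bound. Finally, all its edges lie in $\cE[1,j-1]$, so none is incident to the pivots; I note this because it is presumably what the later moment argument needs.

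The main obstacle is not the disjointness, which is immediate from the (length, residue) labelling, but the boundary bookkeeping. One must ensure the first and last edges exist within range $\ell$ and that the progression is nonempty for every $(d,r)$ simultaneously; this is where the threshold $2\ell-1$ enters. I would check the extreme case $d=\ell$, $j=2\ell-1$ explicitly. There $t\in\{\ell-1,\dots,2\ell-2\}$ and $s\in\{1,\dots,\ell\}$, and the congruence rules out the only bad configuration $s=\ell$, $t=\ell-1$.
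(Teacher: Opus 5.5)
Your part (i) is the paper's argument: a single light edge when $j\le \ell-1$, and otherwise a two-edge path through a vertex within distance $\ell-1$ of both pivots.

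Part (ii) is correct but takes a genuinely different route. The paper writes down, only for $j=3\ell+1$, the mixed-length branching paths $k\to k+b\to k+\ell+1\to 2(\ell+1)-b\to 2(\ell+1)+k$ with $0\le k\le\ell-1$ and $1\le b\le \ell-k$. It checks disjointness by a case analysis, and only sketches the periodic extension and the modification for $2\ell-1\le j<3\ell+1$. You instead use the $\lc$ single-length chains labelled by (length, residue), which is the same decomposition behind the Menger remark and the proof of Lemma~\ref{lem:cut-geometry}. What this buys:
\begin{itemize}
\item disjointness becomes a one-line observation;
\item every $j\ge 2\ell-1$ is handled uniformly, with explicit boundary bookkeeping;
\item it shows why $\lc$ is attained: the full path $\pi_{d,r}$ crosses each interior unit cut $\cC_u$ exactly once, through the unique edge of $\cC_u$ of class $(d,r)$, so your $\lc$ paths partition every such cut.
\end{itemize}
Your cores can have up to about $j$ edges, versus $O(j/\ell)$ in the paper, but only a polynomial-in-$j$ bound is used later, so nothing is lost.

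Your proof establishes the lemma as stated, but your closing remark misidentifies what Proposition~\ref{prop:k-moment}(ii) needs. The point is not that the cores avoid the pivots. It is that the \emph{deleted} first and last edges have weight $<a$ on $\{\rho=j\}$, and the pivot condition guarantees this only for edges of length at most $\ell-1$. In your family, two deleted edges have length $\ell$:
\begin{itemize}
\item $(0,\ell)$, the first edge of $\pi_{\ell,0}$;
\item $(j-\ell,j)$, the last edge of $\pi_{\ell,r}$ with $r\equiv j \pmod{\ell}$.
\end{itemize}
These are exactly the edges of $\cC_{\ell-1}$ and $\cC_{j-\ell}$ in the above partition that your construction leaves outside the cores.

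This breaks the downstream bound. For $j=2\ell-1$, both $\pi_{\ell,0}=0\to\ell\to 2\ell-1$ and $\pi_{\ell,\ell-1}=0\to\ell-1\to 2\ell-1$ have empty cores. Then \eqref{eq:T-upper-Zj} would give $\sfT\le 2a$ on $\{\rho=2\ell-1\}$, which the event $E_y$ in the proof of Proposition~\ref{prop:k-moment}(i) refutes.

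The repair stays within your scheme: move these two edges into the cores of their own paths. They lie in the classes $(\ell,0)$ and $(\ell,j\bmod\ell)$ and are attachment edges of no other path. Consequently:
\begin{itemize}
\item pairwise disjointness survives;
\item every core becomes nonempty;
\item each core still has at most $(j-2)/\ell+2\le j$ edges;
\item every remaining deleted edge has length at most $\ell-1$.
\end{itemize}
The paper's wording ``first and last edges'' is loose in the same way for $k\in\{0,\ell-1\}$; what is meant is the short attachment edges at the two pivots.
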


\begin{proof}
By translation invariance, it suffices to consider paths from $0$ to $j$.
First, suppose $j\le 2\ell-2$. If $j=1$, since $0$ is a pivot node, $\go_{0,1}<a$.
Hence $T(0,1)\le \go_{0,1}<a<2a$.
If $2\le j\le 2\ell-2$, then choose
\begin{align*}
x\in[\max\{1,j-\ell+1\},\min\{\ell-1,j-1\}]\neq \eset.
\end{align*}
Then $1\le x\le \ell-1$ and $j-\ell+1\le x\le j-1$. By the pivot conditions at $0$ and $j$,
\begin{align*}
T(0,j)\le \go_{0,x}+\go_{x,j}<2a.
\end{align*}
Now, assume that $j>2\ell-2$. Let
\begin{align*}
\fL_0:=\{0,1,\ldots,\ell-1\},
\quad
\fR_j:=\{j-\ell+1,\ldots,j\}.
\end{align*}
We first show that there are at most $\lc$ many edge-disjoint paths from $\fL_0$ to $\fR_j$.
For any $u$ with $\ell-1\le u\le j-\ell$, any path from $\fL_0$ to $\fR_j$ must cross the boundary between $u$ and $(u+1)$. The set of edges crossing this boundary is exactly the unit cut $\cC_u$ defined in~\eqref{def:unit_cut}. Since $|\cC_u| = \lc$, any family of edge-disjoint paths from $\fL_0$ to $\fR_j$ has size at most $\lc$.

\begin{figure}[htbp]
\centering
\begin{tikzpicture}[
    % global style
    node/.style  = {circle, draw=black!60, fill=white,
                    inner sep=0pt, minimum size=20pt, font=\small},
    lbl/.style   = {font=\footnotesize\itshape, text=black!55},
    arr/.style   = {-{Stealth[length=5pt,width=4pt]},
                    line width=1.1pt, opacity=0.85},
    every path/.style = {bend angle=12},
    % column x-spacing and row y-spacing
    x=1.5cm, y=1cm
]
% ---------------------------------------------------------
% Nodes  (column, row) -> label
%   row 2 = top, row 1 = middle, row 0 = bottom
%   node 0 sits at (col=0, row=1)
% ---------------------------------------------------------
% col 1  (k = 1,2,3  ->  rows 2,1,0)
\node[node] (v1_1) at (1,2) {$0$};
\node[node] (v1_2) at (2,1) {$1$};
\node[node] (v1_3) at (3,0) {$2$};
% col 2  (k+b in {2,3,4}  ->  rows 2,1,0)
\node[node] (v2_2) at (2,2) {$1$};
\node[node] (v2_3) at (3,1) {$2$};
\node[node] (v2_4) at (4,0) {$3$};
% col 3  (k+ell+1 in {5,6,7}  ->  rows 2,1,0)
\node[node] (v3_5) at (5,2) {$4$};
\node[node] (v3_6) at (6,1) {$5$};
\node[node] (v3_7) at (7,0) {$6$};
% col 4  (2ell-b+3 in {6,7,8}  ->  rows 2,1,0)
\node[node] (v4_6) at (6,2) {$5$};
\node[node] (v4_7) at (7,1) {$6$};
\node[node] (v4_8) at (8,0) {$7$};
% col 5  (2(ell+1)+k in {9,10,11}  ->  rows 2,1,0)
\node[node] (v5_9)  at (9,2) {$8$};
\node[node] (v5_10) at (10,1) {$9$};
\node[node] (v5_11) at (11,0) {$10$};

% --------------------------------------------------------
% Paths
% --------------------------------------------------------
% pi^{(1,1)}: 0 -> 1 -> 2 -> 5 -> 8 -> 9   (red)
\draw[arr, red]
    (v1_1)  edge[bend left=0]   (v2_2)
    (v2_2)  edge[bend left=0]  (v3_5)
    (v3_5)  edge[bend right=0] (v4_8)
    (v4_8)  edge[bend left=0]  (v5_9);

% pi^{(1,2)}: 0 -> 1 -> 3 -> 5 -> 7 -> 9   (blue)
\draw[arr, blue]
    (v1_1)  edge[bend right=0]  (v2_3)
    (v2_3)  edge[bend left=0]   (v3_5)
    (v3_5)  edge[bend right=0]  (v4_7)
    (v4_7)  edge[bend left=0]  (v5_9);

% pi^{(1,3)}: 0 -> 1 -> 4 -> 5 -> 6 -> 9   (teal)
\draw[arr, teal]
    (v1_1)  edge[bend right=0] (v2_4)
    (v2_4)  edge[bend left=0]   (v3_5)
    (v3_5)  edge[bend right=0]  (v4_6)
    (v4_6)  edge[bend left=0]   (v5_9);

% pi^{(2,1)}: 0 -> 2 -> 3 -> 6 -> 8 -> 10  (orange)
\draw[arr, orange]
    (v1_2)  edge[bend left=0]   (v2_3)
    (v2_3)  edge[bend left=0]   (v3_6)
    (v3_6)  edge[bend right=10] (v4_8)
    (v4_8)  edge[bend left=0]   (v5_10);

% pi^{(2,2)}: 0 -> 2 -> 4 -> 6 -> 7 -> 10  (violet)
\draw[arr, violet]
    (v1_2)  edge[bend right=5]  (v2_4)
    (v2_4)  edge[bend left=0]   (v3_6)
    (v3_6)  edge[bend right=0]  (v4_7)
    (v4_7)  edge[bend left=0]   (v5_10);

% pi^{(3,1)}: 0 -> 3 -> 4 -> 7 -> 8 -> 11  (green)
\draw[arr, green]
    (v1_3)  edge[bend left=0]   (v2_4)
    (v2_4)  edge[bend right=0]  (v3_7)
    (v3_7)  edge[bend right=0]  (v4_8)
    (v4_8)  edge[bend right=0] (v5_11);

% --------------------------------------------------------
% Legend
% --------------------------------------------------------
\begin{scope}[shift={(0,-1.2)}]
  \def\lx{1}   \def\ly{0}   \def\ls{1.8cm}  % legend x start, y, col spacing

  % row 1
  \draw[arr, red,    line width=1.5pt] (\lx+0.0, \ly) -- +(\ls,0)
    node[right, font=\small, text=black] {$\pi^{(0,1)}$: $0\!\to\!1\!\to\!4\!\to\!7\!\to\!8$};

  \draw[arr, blue,   line width=1.5pt] (\lx+0.0, \ly-0.55) -- +(\ls,0)
    node[right, font=\small, text=black] {$\pi^{(0,2)}$: $0\!\to\!2\!\to\!4\!\to\!6\!\to\!8$};

  \draw[arr, teal,   line width=1.5pt] (\lx+0.0, \ly-1.10) -- +(\ls,0)
    node[right, font=\small, text=black] {$\pi^{(0,3)}$: $0\!\to\!3\!\to\!4\!\to\!5\!\to\!8$};

  \draw[arr, orange, line width=1.5pt] (\lx+6.0, \ly) -- +(\ls,0)
    node[right, font=\small, text=black] {$\pi^{(1,1)}$: $1\!\to\!2\!\to\!5\!\to\!7\!\to\!9$};

  \draw[arr, violet, line width=1.5pt] (\lx+6.0, \ly-.55) -- +(\ls,0)
    node[right, font=\small, text=black] {$\pi^{(1,2)}$: $1\!\to\!3\!\to\!5\!\to\!6\!\to\!9$};

  \draw[arr, green,  line width=1.5pt] (\lx+6.0, \ly-1.10) -- +(\ls,0)
    node[right, font=\small, text=black] {$\pi^{(2,1)}$: $2\!\to\!3\!\to\!6\!\to\!7\!\to\!10$};
\end{scope}
\end{tikzpicture}
\caption{Schematic explicit construction of the disjoint paths from $\fL_0$ to $\fR_{10}$ for $\ell=3$.}
\label{fig:path_structure}
\end{figure}

To show that this upper bound is strictly achievable, we explicitly construct exactly $\lc$ paths from $\fL_0$ to $\fR_j$ for the case where $j = 3\ell+1$.
For each fixed $k \in \{0, \dots, \ell-1\}$ and $b \in \{1, \dots, \ell-k\}$, we define a unique path $\pi^{(k,b)}$ via the following vertex sequence:
\begin{align*}
k \to k+b \to k+\ell+1 \to 2(\ell+1)-b \to 2(\ell+1)+k.
\end{align*}
See Figure~\ref{fig:path_structure}. 
This construction yields exactly $\sum_{k=0}^{\ell-1} (\ell-k) = \lc$ many distinct paths. To see why the edges of these paths are pairwise disjoint, we classify the paths into two cases.
\begin{enumerate}[{Case}~1.]
    \item ($k_1 \neq k_2$) Paths with different $k$ values are naturally separated because they visit completely distinct sets of vertices at the second node ($k$), the fourth node ($k+\ell+1$), and the final destination node.
    \item ($k_1 = k_2$ but $b_1 \neq b_2$) Paths sharing the same $k$ but having different $b$ values remain disjoint because they branch out at distinct vertices during the third step ($k+b$) and the fifth step ($2\ell-b+3$).
\end{enumerate}
For longer renewal blocks where $j >3\ell+1$, this path structure can be extended to the right periodically by repeating the above construction.
For shorter blocks where $2\ell-1 \le j < 3\ell+1$, the paths can be constructed analogously by connecting their terminal steps directly to $j$. Although some paths may share these initial or final attachment edges, such overlaps are strictly confined to the first and last edges, leaving the remaining edge sets pairwise disjoint.
\end{proof}

\begin{rem}[Alternative proof via Menger's theorem]
The existence of $\lc$ edge-disjoint paths from $\fL_0$ to $\fR_j$ can also be deduced abstractly using the edge version of Menger's theorem (see~\cite[Section 3]{Diestel2017}). Let $A$ be any vertex set containing $\fL_0$ and disjoint from $\fR_j$. We want to find the minimum number of edges crossing from $A$ to its complement $A^c$. 

For each fixed edge length $m \in \{1, \dots, \ell\}$, consider the subgraph consisting only of edges of length $m$. The vertices of this graph can be partitioned into $m$ independent chains with step size $m$. For each of these $m$ chains, the leftmost vertices belong to $\fL_0 \subset A$ and the rightmost vertices belong to $\fR_j \subset A^c$. Therefore, as you move along any single chain from left to right, you must cross the boundary from $A$ to $A^c$ at least once. This guarantees that there are at least $m$ crossing edges of length $m$. 
Summing this over all possible edge lengths $m=1, \dots, \ell$, the total size of any cut separating $\fL_0$ and $\fR_j$ is at least
$
    \sum_{m=1}^{\ell} m = \lc.
$
Since the minimum cut size separating $\fL_0$ and $\fR_j$ is $\lc$, Menger's theorem guarantees the existence of $\lc$ edge-disjoint paths between them. Our explicit construction gives edge-disjoint paths with the same number of edges.
\end{rem}

With the combinatorial path structure, we are ready to prove the moment bounds.
\begin{proof}[Proof of Proposition~\ref{prop:k-moment}]
By translation invariance, set $\rho_1=0$, $\rho_2=\rho$, and $\sfT:=T(0,\rho)$.

\begin{enumeratei}
\item
Assume $0<\gc<k/\lc$. Let
\[
M_\ell := \min_{e\in\cC_{\ell-1}}\go_e,
\]
where $\cC_{\ell-1}$ is the unit cut defined in~\eqref{def:unit_cut}.
For $y>2a$, consider the event 
\[
E_y := \{0 \text{ is a pivot} \} \cap \{\fs_\ell \text{ is a pivot}\} \cap \{M_\ell>y\}.
\]

Observe that the pivot condition at $0$ requires $\go_{0,x}<a$ for $1\le x \le \ell-1$, which violates the crossing condition $\go_{0,x}>2a$ required for $x-1$ to be a pivot. Thus, none of $\{1, \dots, \ell-2\}$ can be pivots. By the same argument, the pivot condition at $\fs_\ell$ rules out pivots in $\{\ell+1, \dots, 2\ell-2\}$. 
Furthermore, on $\{M_\ell>y\}$, we have $\go_{\ell-1,\ell} \ge M_\ell > 2a$. Since a pivot at either $\ell-1$ or $\ell$ requires $\go_{\ell-1,\ell} < a$, neither can be a pivot.

Therefore, the event $E_y$ guarantees that no intermediate pivots exist, implying $E_y \subseteq \{\rho=\fs_\ell\}$ and thus $\sfT = T(0, \fs_\ell) \ge M_\ell > y$.

 The pivot event $\{0 \text{ is a pivot}\}$ depends only on edges with both endpoints in $[-\ell+1, \ell-1]$, $\{\fs_\ell \text{ is a pivot}\}$ depends only on edges within $[\ell, 3\ell-2]$, and $M_\ell$ is determined by edges crossing from $[0, \ell-1]$ to $[\ell, 2\ell-1]$. These three edge sets are mutually disjoint and hence the events defining $E_y$ are independent.

Using this independence and the fact that $|\cC_{\ell-1}| = \lc$, we obtain for all $y>2a$,
\begin{align*}
    \pr(\sfT>y) \ge \pr(E_y) = p_{a,\ell}^2\bar F(y)^{\lc}.
\end{align*}
Therefore,
\begin{align*}
    \E\sfT^k 
    \ge \int_{2a}^\infty ky^{k-1}\pr(\sfT>y)\,dy 
    \ge p_{a,\ell}^2 \int_{2a}^\infty ky^{k-1}\bar F(y)^{\lc}\,dy 
    = \infty,
\end{align*}
since $\bar F(y)=y^{-\gc}L(y)$ and $\gc\lc<k$.

\item  Assume $\gc>k/\lc$. Choose $\eps>0$ such that $k+\eps<\gc\lc$. Fix $j\ge 2\ell-1$.
By Lemma~\ref{lem:path_structure}, there exist $\lc$ paths from $0$ to $j$ whose edge sets are pairwise disjoint except possibly for the first and last edges. On the event $\{\rho=j\}$, the endpoint pivot conditions ensure that the weights of these shared attachment edges are bounded by $a$. Thus, omitting the first and last edges, we obtain $\lc$ completely disjoint paths $\pi_r^{(j)}$ ($1\le r\le \lc$). Let $W_r^{(j)}$ be the passage time of $\pi_r^{(j)}$. Then
\begin{align}\label{eq:T-upper-Zj}
    \sfT\cdot \ind_{\{\rho=j\}} \le Z_j\cdot \ind_{\{\rho=j\}}, \text{ where } Z_j := 2a + \min_{1\le r\le\lc} W_r^{(j)}.
\end{align}
Since each path uses at most $j$ edges, we have $W_r^{(j)} \preceq \sum_{m=1}^{j} \go_{r,m}$, where $\go_{r,m}$ are i.i.d.~copies of $\go$. By the disjointness of the paths, the variables $\{W_r^{(j)}\}_{1\le r\le\lc}$ are independent. Hence, for any large enough $y$,
\begin{equation*}
    \pr(Z_j>y) 
    \le \prod_{r=1}^{\lc} \pr\left(W_r^{(j)} > y-2a\right) 
    \le \big( j\pr(\go > y/2j ) \big)^{\lc}
    \le C j^{\lc(1+\gc)} y^{-\gc\lc} L(y)^{\lc}.
\end{equation*}
for some constant $C>0$. Since $k+\eps < \gc\lc$, we obtain
\begin{equation*}
	    \E Z_j^{k+\eps} = (k+\eps)\int_0^\infty y^{k+\eps-1} \pr(Z_j>y) \,dy \le C' j^{\lc(1+\gc)} \implies \|Z_j\|_{k+\eps} \le C_0 j^A,
\end{equation*}
for some finite constants $C_0, A> 0$ independent of $j$.

We now bound the $k$-th moment of $T$. By Lemma~\ref{lem:path_structure} (i), $T\ind_{\{\rho \le 2\ell-2\}} < 2a$. For the remaining terms, we apply Minkowski's and H\"older's inequalities:
\begin{align*}
    \|\sfT\ind_{\{\rho>2\ell-2\}}\|_k
    \le \sum_{j>2\ell-2} \|\sfT\ind_{\{\rho=j\}}\|_k 
    &\le \sum_{j>2\ell-2} \|Z_j\ind_{\{\rho=j\}}\|_k \\
    &\le \sum_{j>2\ell-2} \|Z_j\|_{k+\eps} \pr(\rho=j)^{\frac{\eps}{k(k+\eps)}} \\
    &\le C_0 \sum_{j>2\ell-2} j^A \pr(\rho=j)^{\frac{\eps}{k(k+\eps)}}.
\end{align*}
By Lemma~\ref{lem:B1-pivot-env}, $\rho$ has an exponential tail, so $\E \sfT^k < \infty$.
When $\E\go^k<\infty$, we can use $\pr(\go>y)\le y^{-k}\E\go^k$ to derive the same result.
\end{enumeratei}
This completes the proof.
\end{proof}

The final prerequisite for our main limit theorems is controlling the boundary residual terms in the pivot decomposition.

\begin{lem}[Endpoint residual estimates]\label{lem:end-residual}
Assume $\ell\ge2$. Recall the residual terms $\sfRT_n$ and $\sfRH_n$ defined in~\eqref{eq:residual-def}.
\begin{enumeratei}
    \item Under Condition~\textup{B.\ref{ass:B1}}, the sequence $\{\sfRT_n\}_{n\ge1}$ is tight, and the sequence $\{\sfRH_n\}_{n\ge1}$ is uniformly bounded in $L^q$ for every $q\ge1$.
    \item Under Condition~\textup{B.\ref{ass:B2}}, both residual sequences have uniformly bounded moments of all orders. That is, for every $q\ge1$,
    \[
        \sup_{n\ge1}\E\left[\big(\sfRT_n\big)^q+\big(\sfRH_n\big)^q\right] < \infty.
    \]
\end{enumeratei}
\end{lem}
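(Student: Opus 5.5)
The plan is to reduce both residuals to the lengths of the two boundary windows $[0,\rho_1]$ and $[\rho_{\gk_n},n]$, together with slightly enlarged windows reaching to the neighbouring pivots. I will then show that these lengths have exponential tails uniformly in $n$.

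\textbf{Step 1: uniform window tails.} The ordinary pivot event at $v$ depends only on edges with both endpoints in $[v-\ell+1,v+\ell-1]$. Hence pivot events at vertices spaced $\fs_\ell=2\ell-1$ apart are independent, each with probability $p_{a,\ell}$. For any interval $I$ of length $m$, this gives
\[
\pr(\text{no pivot in } I)\le (1-p_{a,\ell})^{\lfloor m/\fs_\ell\rfloor},
\]
uniformly in the position of $I$. The generalized case is identical, with spacing $\fs_{\ell,k}$ and probability $p_{\mathrm{gen},\ell}$. I apply this to three windows:
\begin{itemize}
\item the left window $\rho_1$;
\item the right window $n-\rho_{\gk_n}$, noting that the event $\{n-\rho_{\gk_n}>m\}$, including the case $\gk_n=0$, forces no pivot in a window of length about $m-M_k$ ending at $n-M_k$;
\item the straddling windows $\rho_1-\rho_0$ and $\rho_{\gk_n+1}-\rho_{\gk_n}$, where $\rho_0<0$ is the last pivot to the left of $0$ on the infinite line, which exists almost surely.
\end{itemize}
All of these have geometric tails uniformly in $n$, so all their moments are bounded uniformly in $n$.

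\textbf{Step 2: hop count.} I claim the minimum-hop geodesic $\pi^\star(0,\rho_1)$ stays inside $[\rho_0,\rho_1]$. Suppose it visited some $w<\rho_0$. By Lemma~\ref{lem:pivot}, or Lemma~\ref{lem:B2-kpivot-forcing}, applied in both directions (the graph is undirected), the subpath from $0$ to $w$ and the subpath from $w$ to $\rho_1$ would each pass through $\rho_0$. The path would then repeat a vertex. But a minimum-hop geodesic is simple, since removing a loop does not increase the weight and strictly decreases the hop count.

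By the same argument, $\pi^\star(0,\rho_1)$ cannot go right of $\rho_1$ and return. Hence
\[
H(0,\rho_1)\le \rho_1-\rho_0.
\]
Symmetrically, $H(\rho_{\gk_n},n)$ is bounded by the length of the window from $\rho_{\gk_n}$ to the first pivot whose chain lies beyond $n$. By Step~1, this gives uniform $L^q$ bounds on $\sfRH_n$ in both cases.

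The case $\gk_n=0$ is handled the same way, with the window $[\rho_0,\rho_1]$ replaced by the window containing both $0$ and $n$.

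\textbf{Step 3: passage time.} Under B.\ref{ass:B2}, all weights lie in $[1,2]$, so $\sfRT_n\le 2\sfRH_n$ and the moment bounds transfer directly.

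Under B.\ref{ass:B1}, $T(0,\rho_1)$ is a single almost surely finite random variable not depending on $n$, so it is tight. For the right end, fix $m$ and $y$. Since $T(\rho_{\gk_n},n)$ is at most the weight of the nearest-neighbour path from $\rho_{\gk_n}$ to $n$,
\[
\pr\big(T(\rho_{\gk_n},n)>y\big)\le \pr(n-\rho_{\gk_n}>m)+\pr\Big(\sum_{u=n-m}^{n-1}\go_{u,u+1}>y\Big).
\]
The second probability has the law of a sum of $m$ i.i.d.\ copies of $\go$, so it does not depend on $n$. Choosing $m$ large and then $y$ large makes the right-hand side small uniformly in $n$, which gives tightness of $\sfRT_n$.

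\textbf{Main obstacle.} The main difficulty is Step~1 for the right endpoint. It requires correctly handling the dependence of $\gk_n$ on the environment near $n$, the offset $M_k$ in the generalized case, and the degenerate case $\gk_n=0$. I will handle this by bounding each event by the deterministic event ``no pivot in a fixed interval of length $m$'' and then applying the spaced-independence bound above.
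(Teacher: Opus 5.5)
Your overall route matches the paper's. You get uniform geometric tails for the boundary pivot gaps from sparse independent trials, bound the hop residual by the pivot gaps straddling $0$ and $n$, and get tightness of $T(\rho_{\gk_n},n)$ by truncating to a bounded window of nearest-neighbour edges. Under Condition~B.\ref{ass:B1} the argument is complete. Your loop-removal plus forcing argument for $H(0,\rho_1)\le\rho_1-\rho_0$ and $H(\rho_{\gk_n},n)\le\rho_{\gk_n+1}-\rho_{\gk_n}$ also supplies the localization that the paper's appeal to nearest-neighbour paths leaves implicit; a nearest-neighbour path need not be a geodesic, so it does not by itself bound $H$.

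The step that does not work as written is Step~2 under Condition~B.\ref{ass:B2}. Lemma~\ref{lem:B2-kpivot-forcing} only guarantees passage through a generalized pivot $x$ when the right endpoint lies strictly beyond $x+M_k$, not merely beyond $x$.

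\begin{itemize}
\item With $\rho_0$ the last generalized pivot left of $0$, the event $-M_k\le\rho_0<0$ has positive probability (for instance $\rho_0=-1$). On that event the subpath from $w$ to $0$ is not forced through $\rho_0$.
\item Likewise, an excursion to some $w\in(\rho_1,\rho_1+M_k]$ is not excluded.
\item The same defect affects the right endpoint, where only $\rho_{\gk_n}+M_k\le n$ is guaranteed, and the case $\gk_n=0$.
\end{itemize}

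You could repair this by taking $\rho_0$ to be the last generalized pivot with $\rho_0+M_k<0$ and localizing to $[\rho_0,\rho_1+M_k]$. The paper's route is simpler and avoids localization entirely in this case. Since all weights lie in $[1,2]$, the nearest-neighbour path gives $H(a,b)\le T(a,b)\le 2(b-a)$. Hence
\begin{itemize}
\item $\sfRT_n+\sfRH_n\le 4\bigl(\rho_1+n-\rho_{\gk_n}\bigr)$ when $\gk_n\ge1$;
\item $\sfRT_n+\sfRH_n\le 4n\le 4(\rho_1+M_k)$ when $\gk_n=0$.
\end{itemize}
Your Step~1 tails for $\rho_1$ and $n-\rho_{\gk_n}$ then give the uniform moment bounds directly, without passing through $\sfRT_n\le 2\sfRH_n$.
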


\begin{proof}
\begin{enumeratei}
\item The initial terms $T(0,\rho_1)$ and $H(0,\rho_1)$ are finite almost surely and do not depend on $n$, hence they are tight. It remains to prove the terminal residuals are tight.

Let
\begin{align*}
    \sfP_{\gk_n}:=\rho_{\gk_n+1}-\rho_{\gk_n}
\end{align*}
be the pivot gap containing $n$.
We first show that $\{\sfP_{\gk_n}\}_{n\ge1}$ is tight. Recall that sparse trial pivots occur on $\fs_\ell\dZ$, where $\fs_\ell=2\ell-1$, and the trial events are i.i.d.~Bernoulli with parameter $p_{a,\ell}>0$. Let $j(n):=\lfloor n/\fs_\ell\rfloor$. If there is at least one successful trial among
\begin{align*}
    j(n)-M,\ldots,j(n)
\end{align*}
and at least one successful trial among
\begin{align*}
    j(n)+1,\ldots,j(n)+M+1,
\end{align*}
then the pivot gap containing $n$ has length at most $(2M+3)\fs_\ell$. Therefore
\begin{align}\label{eq:endpoint-gap-tight}
    \sup_{n\ge1}
    \pr\left(\sfP_{\gk_n}>(2M+3)\fs_\ell\right)
    \le 2(1-p_{a,\ell})^M,
\end{align}
and hence $\sfP_{\gk_n}$ has uniform $q$-th moment:
\begin{align}\label{eq:endpoint-gap-moments}
    \sup_{n\ge1}\E \sfP_{\gk_n}^q<\infty,
    \quad q\ge1.
\end{align}

For the hop residual, let $\rho_{-1}<0<\rho_1$ be the two consecutive pivots surrounding the origin. Considering the nearest-neighbor paths, we have
\begin{align*}
    H(0,\rho_1)\le \rho_1-\rho_{-1},
    \qquad
    H(\rho_{\gk_n},n)\le \sfP_{\gk_n}.
\end{align*}
Lemma~\ref{lem:B1-pivot-env} (iii) applies also to $\rho_1-\rho_{-1}$, and thus~\eqref{eq:endpoint-gap-moments} gives
$
    \sup_{n\ge1}\E\left[(\sfRH_n)^q\right]<\infty
$
for every $q\ge1$.

We next consider the passage time. Fix $M<\infty$ and set $\fg_{M,\ell}:=(2M+3)\fs_\ell$. Let $\cW_M^T$ denote the worst-case (maximum) passage time between any two vertices within the $\ell$-spread-out line graph on $\{0, 1, \dots, \fg_{M,\ell}\}$, defined as:
\begin{equation*}
    \cW_M^T := \max_{0 \le u < v \le \fg_{M,\ell}} T_{\{0, \dots, \fg_{M,\ell}\}}(u,v),
\end{equation*}
where $T_{\{0, \dots, \fg_{M,\ell}\}}$ is the passage time restricted to paths within this finite graph. Since this graph contains only a finite number of edges, $\cW_M^T<\infty$ a.s. 

On the event $\{\sfP_{\gk_n}\le \fg_{M,\ell}\}$, the terminal residual gap is confined to a length of at most $\fg_{M,\ell}$, which implies $\rho_{\gk_n} \in [n-\fg_{M,\ell}, n]$. By translation invariance, the maximum passage time within this specific interval is stochastically dominated by $\cW_M^T$, yielding:
\begin{equation*}
    T(\rho_{\gk_n},n)\ind_{\{\sfP_{\gk_n}\le \fg_{M,\ell}\}} \preceq \cW_M^T.
\end{equation*}

To prove the tightness of $\{\sfRT_n\}_{n\ge1}$, let $\eps>0$. First, choose $M$ so large that
\begin{equation*}
    \sup_{n\ge1}\pr(\sfP_{\gk_n} > \fg_{M,\ell}) < \eps.
\end{equation*}
Next, since $T(0,\rho_1)$ and $\cW_M^T$ are almost surely finite, we can choose $K$ large enough such that
\begin{equation*}
    \pr(T(0,\rho_1)>K)+\pr(\cW_M^T>K)<\eps.
\end{equation*}
Using the union bound and the stochastic domination on the event $\{\sfP_{\gk_n} \le \fg_{M,\ell}\}$, we obtain
\begin{align*}
    \pr(\sfRT_n>2K)
    &\le \pr(T(0,\rho_1)>K) + \pr\big(T(\rho_{\gk_n},n)>K,\, \sfP_{\gk_n} \le \fg_{M,\ell}\big) + \pr(\sfP_{\gk_n} > \fg_{M,\ell}) \\
    &\le \big[ \pr(T(0,\rho_1)>K) + \pr(\cW_M^T>K) \big] + \eps 
    < \eps + \eps = 2\eps.
\end{align*}
Taking the supremum over $n\ge1$ yields $\sup_{n\ge1}\pr(\sfRT_n>2K)\le 2\eps$, which completes the proof.

\medskip
\item 
For a node $v\in\dZ$, let $G_k(v)$ be the distance between the nearest generalized pivots surrounding $v$. More precisely, define
\[
    \rho^-(v):=\max\{\rho_i:\rho_i<v\},
    \qquad
    \rho^+(v):=\min\{\rho_i:\rho_i\ge v\},
\]
and set
\[
    G_k(v):=\rho^+(v)-\rho^-(v).
\]
The same Bernoulli-trial argument used in~\eqref{eq:endpoint-gap-tight}--\eqref{eq:endpoint-gap-moments}, with $\fs_\ell$ and $p_{a,\ell}$ replaced by $\fs_{\ell,k}$ and $p_{\mathrm{gen},\ell}$, gives
\begin{align}\label{eq:B2-endpoint-gap-moments}
    \sup_{v\in\dZ}\E\big[G_k(v)^q\big]<\infty,
    \quad q\ge1.
\end{align}
The same argument also gives
\begin{align}\label{eq:B2-rho1-moments}
    \E\rho_1^q<\infty,
    \quad q\ge1.
\end{align}

Since $1\le \go\le2$, the nearest-neighbor path gives, for every $a<b$, we have
$
    H(a,b)\le T(a,b)\le 2(b-a).
$
In particular,
\begin{align}\label{eq:B2-endpoint-sum-bound}
    T(a,b)+H(a,b)\le 4(b-a).
\end{align}

On the event $\{\gk_n\ge1\}$, the definition of $\gk_n$ in~\eqref{eq:kappa-def} gives $\rho_{\gk_n} \le n-M_k < \rho_{\gk_n+1}$.
Hence, by the definition of $G_k$,
\begin{align}\label{eq:B2-terminal-gap-bound}
    n-\rho_{\gk_n}
    =
    M_k+\bigl(n-M_k-\rho_{\gk_n}\bigr)
    \le
    M_k+G_k(n-M_k).
\end{align}
Combining~\eqref{eq:B2-endpoint-sum-bound} and
\eqref{eq:B2-terminal-gap-bound}, we obtain
\begin{align}\label{eq:B2-residual-bound-kge1}
    \sfRT_n+\sfRH_n
    &=
    T(0,\rho_1)+H(0,\rho_1)
    +T(\rho_{\gk_n},n)+H(\rho_{\gk_n},n)
    \notag\\
    &\le
    4\rho_1+4\bigl(n-\rho_{\gk_n}\bigr)
    \notag\\
    &\le
    4\bigl(\rho_1+M_k+G_k(n-M_k)\bigr).
\end{align}

On the event $\{\gk_n=0\}$, we have $\rho_1+M_k\ge n$, and again by~\eqref{eq:B2-endpoint-sum-bound},
\begin{align}\label{eq:B2-residual-bound-kzero}
    \sfRT_n+\sfRH_n
    =
    T_n+H_n
    \le 4n
    \le 4(\rho_1+M_k).
\end{align}
The bounds~\eqref{eq:B2-residual-bound-kge1} and
\eqref{eq:B2-residual-bound-kzero}, together with
\eqref{eq:B2-endpoint-gap-moments} and~\eqref{eq:B2-rho1-moments}, imply
that for every $q\ge1$,
\begin{align*}
    \sup_{n\ge1}
    \E\left[
        \big(\sfRT_n\big)^q+\big(\sfRH_n\big)^q
    \right]
    \le
    2\sup_{n\ge1}
    \E\left[
        \big(\sfRT_n+\sfRH_n\big)^q
    \right]
    \le
    C_q
    \left(
        1+\E\rho_1^q+
        \sup_{v\in\dZ}\E G_k(v)^q
    \right)
    <\infty.
\end{align*}
\end{enumeratei}
This completes the proof.
\end{proof}

To synthesize the independent bulk block contributions, we rely on the standard renewal-reward central limit theorem. We state it here in a convenient form tailored to our needs.

\begin{lem}[Renewal--reward CLT]\label{lem:renewal-reward-clt}
Let $\{(X_i,Y_i)\}_{i\ge1}$ be i.i.d.~with $X_i\in\dN$,
\begin{align*}
\E X_1=\mu\in(0,\infty),
\qquad
\E Y_1=\nu\in\dR,
\qquad
\E X_1^2+\E Y_1^2<\infty.
\end{align*}
Define
\begin{align*}
S_m:=\sum_{i=1}^m X_i,
\qquad
R_m:=\sum_{i=1}^m Y_i,
\qquad
N_t:=\max\{m\ge0:S_m\le t\},
\qquad
R(t):=R_{N_t}.
\end{align*}
Then, as $t\to\infty$,
\begin{align}\label{eq:renewal-reward-deterministic-centering}
t^{-1/2}\cdot (R(t)-t\cdot {\nu}/{\mu})
\Longrightarrow
\N(0,\sigma^2),
\end{align}
where $\sigma^2 = \frac{1}{\mu} \var\left(Y_1-\nu/{\mu}\cdot X_1\right)$.
Equivalently,
\begin{align}\label{eq:renewal-reward-mean-centering}
t^{-1/2}\cdot (R(t)-\E R(t))
\Longrightarrow
\N(0,\sigma^2).
\end{align}
\end{lem}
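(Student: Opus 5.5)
The plan is the classical decomposition of the stopped sum into a random-index sum of i.i.d.\ centered variables plus a boundary error. Set $Z_i:=Y_i-(\nu/\mu)X_i$, so that $\{Z_i\}$ is i.i.d.\ with $\E Z_1=0$ and $\var(Z_1)=\mu\sigma^2<\infty$. Then
\begin{align*}
R(t)-t\nu/\mu=\sum_{i=1}^{N_t}Z_i+\frac{\nu}{\mu}\bigl(S_{N_t}-t\bigr).
\end{align*}
The proof has two parts: (a) the boundary term $S_{N_t}-t$ is $o_\pr(\sqrt t)$, and (b) $t^{-1/2}\sum_{i\le N_t}Z_i\Rightarrow\N(0,\sigma^2)$.

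For (a), note that $0\le t-S_{N_t}<X_{N_t+1}$. It therefore suffices to show $X_{N_t+1}=o_\pr(\sqrt t)$. Since $N_t\le t$ (because $X_i\ge1$), we have
\[
X_{N_t+1}\le\max_{i\le t+1}X_i,
\]
and $\max_{i\le m}X_i=o(\sqrt m)$ almost surely whenever $\E X_1^2<\infty$, by Borel--Cantelli applied to $\sum_m\pr(X_m^2>\eps m)<\infty$.

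For (b), apply Anscombe's theorem. The renewal SLLN gives $N_t/t\to1/\mu$ almost surely. Donsker's invariance principle for the partial sums of $Z_i$ gives tightness of the modulus, i.e.\ uniform continuity in probability. Together these allow replacing $N_t$ by $\lfloor t/\mu\rfloor$ at cost $o_\pr(\sqrt t)$. The ordinary CLT then gives variance $(t/\mu)\var(Z_1)/t=\sigma^2$. This is essentially the standard renewal-reward CLT of \cite{mitov2014}, and one may simply cite it. The step requiring care is the Anscombe replacement, which is routine via Kolmogorov's maximal inequality.

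For the mean-centered version \eqref{eq:renewal-reward-mean-centering}, it suffices to show $\E R(t)=t\nu/\mu+o(\sqrt t)$. We have $N_t+1$ a stopping time with $\E N_t<\infty$, so Wald's identity gives
\[
\E\sum_{i\le N_t+1}Z_i=0 .
\]
Hence
\[
\E R(t)-t\nu/\mu=-\E Z_{N_t+1}+\frac{\nu}{\mu}\E\bigl(S_{N_t}-t\bigr),
\]
and both terms are bounded by $C\,\E\bigl(|Z_{N_t+1}|+X_{N_t+1}\bigr)$. The main obstacle is controlling $\E|Z_{N_t+1}|$, which involves the size-biased inspection block. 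Its expectation is at most
\[
\E\max_{i\le t+1}|Z_i|\le\Bigl(\E\max_{i\le t+1}Z_i^2\Bigr)^{1/2},
\]
and $\E\max_{i\le m}Z_i^2=o(m)$ by uniform integrability of $Z_1^2$: bound the maximum by $\eps m+\sum_{i\le m}Z_i^2\ind_{\{Z_i^2>\eps m\}}$. This gives $o(\sqrt t)$, and Slutsky's lemma concludes.
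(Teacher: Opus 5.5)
Your proposal is correct and takes the paper's route: center the reward as $Y_i-(\nu/\mu)X_i$, use $N_t/t\to1/\mu$ with Anscombe's theorem (the paper simply cites \cite{mitov2014} for this), and show the mean correction is negligible on the $\sqrt t$ scale; your explicit treatment of the boundary term $S_{N_t}-t$ fills in a step the paper leaves implicit. The one difference is that your Wald-plus-maximal bound gives only $\E R(t)=t\nu/\mu+o(\sqrt t)$, which suffices for the lemma, whereas the paper quotes the sharper renewal-reward estimate $O(1)$ (reused later for $\E H_n$), which you could recover by noting that each integer is a renewal epoch at most once, so $\E|Z_{N_t+1}|+\E X_{N_t+1}\le\E[(|Z_1|+X_1)X_1]<\infty$.
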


\begin{proof}
This is the standard renewal-reward CLT for i.i.d.~pairs $(X_i,Y_i)$, allowing dependence between $X_i$ and $Y_i$ inside the same pair; see,~\cite[Theorem 3.19]{mitov2014}. The variance is obtained by applying the bivariate CLT to the centered reward $Y_i-\frac{\nu}{\mu}X_i$.
The replacement of the deterministic index by the renewal index $N(t)$ follows from Anscombe's theorem, since $N(t)/t\to1/\mu$ a.s. This gives~\eqref{eq:renewal-reward-deterministic-centering}. The ordinary renewal-reward theorem also gives
$
    \E R(t)=t\cdot {\nu}/{\mu}+O(1),
$
so deterministic centering and mean centering differ by $O(1)/\sqrt t$. This proves~\eqref{eq:renewal-reward-mean-centering}.
\end{proof}

To obtain a non-degenerate Gaussian limit, we must ensure that the asymptotic variances are strictly positive. The following proposition establishes this non-degeneracy for both the passage time and the hop count.

\begin{prop}[Strict positivity of variances]
\label{prop:pos-var}
Assume that $\ell\ge2$ and $\go$ is not deterministic.
\begin{enumeratei}
\item If either $\E \go^2<\infty$, or Assumption~\textup{A.\ref{ass:A2}} holds with $\gc>2/\lc$, then $\gs_{\mathrm{FPP}}^2>0.$ 

\item Without any additional moment assumption, $\gs_{\mathrm{Hop}}^2>0.$
\end{enumeratei}
\end{prop}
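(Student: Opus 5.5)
The plan is to reduce non-degeneracy of each limiting variance to non-degeneracy of a \emph{conditional} law inside a single renewal block. By~\eqref{eq:var-CLT-def}, $\gs_{\mathrm{FPP}}^2=0$ holds iff $\sfT_1-c\,\sfP_1$ is a.s.\ constant, where $c=\mu_{\sfT}/\mu_{\sfP}$. Both means are finite: $\mu_{\sfP}$ by Lemma~\ref{lem:B1-pivot-env}(iii) and Lemma~\ref{lem:B2-pivot-env}(iii), and $\mu_{\sfT}$ by Proposition~\ref{prop:k-moment} or Lemma~\ref{lem:B2-pivot-env}(iv). If $\sfT_1-c\,\sfP_1$ were a.s.\ constant, then for every $j$ with $\pr(\sfP_1=j)>0$ the conditional law of $\sfT_1$ given $\{\sfP_1=j\}$ would be a point mass. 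The same holds for $\sfH_1$ with $\mu_{\sfH}/\mu_{\sfP}$. So it suffices to exhibit one gap value $j$ and two disjoint events $A,A'\subseteq\{\sfP_1=j\}$ of positive probability on which $\sfT_1$ (resp.\ $\sfH_1$) lies in disjoint sets. By the i.i.d.\ block structure (Lemmas~\ref{lem:B1-pivot-env} and~\ref{lem:B2-pivot-env}) and translation invariance, I will work with $\rho_1=0$ and build events on the finite environment $\Env[0,j]$, together with the pivot windows, as in the proof of Proposition~\ref{prop:k-moment}(i).

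\textbf{Passage time under Condition~B.\ref{ass:B1}.} Take $j=\fs_\ell=2\ell-1$ and the event
\[
\{0\text{ pivot}\}\cap\{\fs_\ell\text{ pivot}\}\cap\{M_\ell>2a\}.
\]
As shown in that proof, this event forces $\sfP_1=\fs_\ell$, and its three defining edge sets are disjoint. On it, every path must cross the cut $\cC_{\ell-1}$, so $\sfT_1\ge M_\ell$. Conversely, the path $0\to\ell-1\to\ell\to 2\ell-1$ uses two light pivot edges, giving $\sfT_1< 2a+\go_{\ell-1,\ell}$. Now specify the cut weights. Under $A$, the edge $(\ell-1,\ell)$ has weight in $(x,x+\delta)$ and all other cut edges exceed $x$; under $A'$, all cut edges have weight larger than $x+2a+\delta$. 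Then $\sfT_1<x+2a+\delta$ on $A$ and $\sfT_1> x+2a+\delta$ on $A'$. The only requirement is that the law of $\go$ puts mass both near some $x>2a$ and above $x+2a+\delta$.

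This requirement is the step I expect to be the main obstacle, since a bounded support may be too narrow above $2a$. I would handle it by exploiting the freedom in the choice of $a$: by the remark following~\eqref{eq:joint-cov-def}, the limit constants do not depend on $a$, so it suffices to exhibit one admissible $a$. If $\go$ has unbounded support, any $a$ works. Otherwise, with $A_0=\inf\supp\go$ and $B=\sup\supp\go$ and $2A_0<B$ as in Lemma~\ref{lem:condB}, choose $a\downarrow A_0$ so that $2a<B$ leaves room. If this still fails for a narrow support, I would vary a light edge instead of a cut edge. Concretely, I would give the two incident edges $(0,\ell-1)$ and $(\ell,2\ell-1)$ weights near $\inf\supp$ versus near $a$, while keeping all competing routes (which must use other heavy cut edges) strictly costlier. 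The remaining difficulty is then a finite bookkeeping of which path is the geodesic in each case.

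\textbf{Passage time under Condition~B.\ref{ass:B2}.} Here all moments are finite by Lemma~\ref{lem:B2-pivot-env}(iv), and the same conditioning applies with the generalized pivot chains. I would fix a gap $j$ between two chains and vary one intermediate edge that every geodesic must use. For example, when the block between chains has length one step $\ell$, that step has to be taken by a single edge between chain vertices. Choosing it light versus slightly heavier within $[1,1+s]$ shifts $\sfT_1$ while keeping $\sfP_1=j$.

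\textbf{Hop count.} No moment assumption is needed, because $\sfH_1\le \sfP_1$ (by the nearest-neighbour comparison and Lemma~\ref{lem:B2-pivot-env}(iv)), so $\gs_{\mathrm{Hop}}^2$ is well defined. Again fix $\sfP_1=\fs_\ell$ via the pivot and cut constraints, and compare two configurations of the middle edges. In the first, the geodesic crosses the cut with the long edge $(0,\ell-1)\to(\ell-1,2\ell-1)$, i.e.\ uses $2$ edges. In the second, that long crossing edge is very heavy and only the unit edge $(\ell-1,\ell)$ is moderately heavy, forcing $3$ edges. The resulting $\sfH_1$ values $2\neq 3$ show that the conditional law of $\sfH_1$ given $\sfP_1=\fs_\ell$ is non-degenerate, hence $\gs_{\mathrm{Hop}}^2>0$.
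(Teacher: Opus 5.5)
Your reduction is correct: if a variance vanished, $\sfT_1$ (resp.\ $\sfH_1$) would be an affine function of $\sfP_1$, hence a point mass given $\{\sfP_1=j\}$. For laws with unbounded support, in particular under A.2, your events at $j=\fs_\ell$ are a valid alternative to the paper's heavy-tail versus exponential-tail argument.

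\textbf{The gap.} Every concrete construction you give needs $\go$ to charge several well-separated levels. The statement, however, covers every non-degenerate law, including bounded two-point laws, which satisfy A.1.
\begin{enumerate}
\item \emph{Passage time under B.1.} Take $\go\in\{0,1\}$ and $a\in(0,1/2)$, so light edges equal $0$ and heavy edges equal $1$. There is no $x\ge 2a$ with mass near $x$ and also above $x+2a+\delta$. Your fallback of moving the light edges $(0,\ell-1),(\ell,2\ell-1)$ has nothing to move.
\item \emph{Hop count under B.1.} Your construction fails outright for this law when $\ell=2$ and $j=3$. A $3$-edge minimum-hop geodesic needs $\go_{1,2}<\min(\go_{0,2},\go_{1,3})$, i.e.\ $\go_{1,2}=0$ and $\go_{0,2}=1$. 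But that makes $1$ a pivot. Hence $\sfH_1=2$ a.s.\ on $\{\sfP_1=3\}$, and no pair of events at this gap can work.
\item \emph{Passage time under B.2.} With $j=k\ell$, making the free edge $((k-1)\ell,k\ell)$ light turns $\ell$ into a generalized pivot, since every other condition on $S_\ell$ is already supplied by the pivots at $0$ and $k\ell$. So $\sfP_1$ changes. Making it ``slightly heavier'' instead needs two distinct non-light support values, which $\go\in\{1,2\}$ (with $k=3$) does not have.
\item \emph{Hop count under B.2.} Your argument is never adapted to this case. The cut $\cC_{\ell-1}$ cannot be made heavy there, because its edges $(x,\ell)$, $0\le x\le\ell-1$, are incident to the chain vertex $\ell$ and must be light.
\end{enumerate}

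\textbf{The missing idea} is to use only the light/heavy dichotomy that the pivot conditions already provide.
\begin{enumerate}
\item \emph{Passage time under B.1.} Keep $j=2\ell-1$ and compare your heavy-cut event ($\sfT_1>2a$) with the event where, besides the two end pivots, $\go_{\ell-1,2\ell-1},\go_{\ell-2,\ell},\go_{\ell-1,\ell+1}<a$. These three edges are unconstrained by the end pivots. The last two rule out pivots at $\ell-1$ and $\ell$, so $\sfP_1=2\ell-1$, while $\sfT_1\le\go_{0,\ell-1}+\go_{\ell-1,2\ell-1}<2a$.
\item \emph{Hop count.} The paper compares two different gaps whose values are incompatible with $\sfH_1=c\,\sfP_1$, rather than varying $\sfH_1$ at one gap. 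Under B.1 these are $(\sfP_1,\sfH_1)=(\ell-1,1)$ and $(\ell,1)$, with all edges of $\cE[0,b]$ in a set $J\subset[0,a)$ on which one edge strictly beats two. Under B.2 they are $(\ell,1)$ and $(k\ell-1,k)$. This needs only one light value.
\item \emph{Passage time under B.2.} The paper simply cites Ahlberg's CLT for A.1. If you want to stay self-contained, the same two-gap comparison works. When $0$ and $\ell$ are generalized pivots, $\sfT_1=\go_{0,\ell}$, which is non-degenerate unless the light part of the law is a single atom $\lambda$. In that case $\sfT_1$ equals $\lambda$ there and $k\lambda$ when $0$ and $k\ell-1$ are generalized pivots, which contradicts $\sfT_1=c\,\sfP_1$.
\end{enumerate}
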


\begin{proof}
\begin{enumeratei}
    \item Group every $\ell$ consecutive vertices into one level. Then $\dZ^{(\ell)}$ is essentially a one-dimensional periodic graph.
    
    \noindent\emph{Case 1: Under Condition~\textup{A.\ref{ass:A1}}.} Ahlberg's passage-time central limit theorem guarantees a strictly positive variance for any non-deterministic edge law; see \cite[Theorem~1.2]{a15}. Applying this to the targets $m\ell$ yields a nondegenerate Gaussian limit on the $\sqrt{m}$-scale. Rewriting it on the $\sqrt{m\ell}$-scale and comparing it with Theorem~\ref{thm:LLN-CLT}~\textup{(b)} along $n=m\ell$ gives $\gs_{\mathrm{FPP}}^2>0$.

    \noindent\emph{Case 2: Under Condition~\textup{A.\ref{ass:A2}} with $\gc>2/\lc$.} If $\gs_{\mathrm{FPP}}^2=0$, then \eqref{eq:var-CLT-def} implies
    \begin{align*}
        \sfT_1 = \frac{\mu_{\sfT}}{\mu_{\sfP}}\sfP_1 \quad\text{a.s.}
    \end{align*}
    The right-hand side has an exponential tail by Lemma~\ref{lem:B1-pivot-env}~\textup{(\ref{piv-iii})}, whereas Lemma~\ref{lem:T1-tail} shows that $\sfT_1$ has a heavy tail. This contradiction proves $\gs_{\mathrm{FPP}}^2>0$.

    \item Suppose $\gs_{\mathrm{Hop}}^2=0$. Then,
    \begin{align}\label{eq:hop-degen}
        \sfH_1 = \frac{\mu_{\sfH}}{\mu_{\sfP}}\sfP_1 \quad\text{a.s.}
    \end{align}

    \noindent\emph{Case 1: Under Condition~\textup{B.\ref{ass:B1}}.} 
    Choose a Borel set $J\subset[0,a)$ with $\pr(\go\in J)>0$ such that either $J=\{0\}$ or $J\subset[u,v]$ for some $0<u\le v<2u$. For $b\in\{\ell-1,\ell\}$, define     
    \begin{align*}
        E_d := \{0 \text{ and } b \text{ are ordinary pivots}\} \cap \{\go_e\in J \text{ for every } e\in\cE[0,b]\}.
    \end{align*}
    For both $b \in \{\ell-1, \ell\}$, the endpoint pivot conditions do not conflict and require no heavy edges in $\cE[0,b]$, ensuring $\pr(E_b) > 0$. On this event, intermediate pivots are ruled out, making the single edge $(0,b)$ the unique minimum-hop geodesic. By Lemma~\ref{lem:B1-pivot-env}~\textup{(\ref{piv-ii})}, we have
    \begin{align*}
        \pr\bigl((\sfP_1,\sfH_1)=(\ell-1,1)\bigr) > 0 \text{ and }\pr\bigl((\sfP_1,\sfH_1)=(\ell,1)\bigr) > 0.
    \end{align*}
    This contradicts \eqref{eq:hop-degen} since $1/(\ell-1) \ne 1/\ell$.

    \noindent\emph{Case 2: Under Condition~\textup{B.\ref{ass:B2}}.} Let $Q_x$ denote the event that $x$ is a generalized pivot, and set $b_\star:=k\ell-1$. The events $Q_0\cap Q_\ell$ and $Q_0\cap Q_{b_\star}$ both occur with positive probability.

On $Q_0\cap Q_\ell$, the light edge $(0,\ell)$ jumps over all $z \in (0,\ell)$, which prevents any intermediate vertex $z$ from being a pivot. Since $\go_{0,\ell} < 1 + 1/(k+1) < 2$, this single edge is the unique geodesic, yielding
\begin{align}\label{eq:B2-hop-block-one}
    \pr\bigl((\sfP_1,\sfH_1)=(\ell,1)\bigr) > 0.
\end{align}

Similarly, on $Q_0\cap Q_{b_\star}$, intermediate pivots are ruled out. The explicit path $0 \to \ell \to 2\ell \to \dots \to (k-1)\ell \to b_\star$ consists of $k$ light edges with total weight strictly less than $k+1$. Since any path with $k+1$ or more edges would have a weight of at least $k+1$, the minimum hop count is exactly $k$. Thus,
\begin{align}\label{eq:B2-hop-block-k}
    \pr\bigl((\sfP_1,\sfH_1)=(k\ell-1,k)\bigr) > 0.
\end{align}
Equations \eqref{eq:B2-hop-block-one} and \eqref{eq:B2-hop-block-k} contradict \eqref{eq:hop-degen} since $1/\ell \ne k/(k\ell-1)$. Therefore, $\gs_{\mathrm{Hop}}^2>0$.
\end{enumeratei}
\end{proof}

With all the necessary structural decompositions and integrability bounds, we are now ready to prove the main limit theorems.

\begin{proof}[Proof of Theorem~\ref{thm:LLN-CLT}]
For $i\ge1$, recall that
$\sfP_i:=\rho_{i+1}-\rho_i$,
$\sfT_i:=T(\rho_i,\rho_{i+1})$, and
$\sfH_i:=H(\rho_i,\rho_{i+1})$.  
By Lemma~\ref{lem:B1-pivot-env} under Condition~B.\ref{ass:B1}, and by Lemma~\ref{lem:B2-pivot-env} under Condition~B.\ref{ass:B2}, the sequence $\{(\sfP_i,\sfT_i,\sfH_i)\}_{i\ge1}$ is i.i.d.~Since $0\le\sfH_i\le\sfP_i$, and since the pivot gaps have exponential tails in both cases, we have
\[
    \E\sfP_1^2+\E\sfH_1^2<\infty.
\]

By~\eqref{eq:T-decomp} and~\eqref{eq:H-decomp}, we have
\begin{align}\label{eq:T-H-decomp}
    T_n = \sum_{i=1}^{\gk_n-1}\sfT_i + \sfRT_n,
    \qquad
    H_n = \sum_{i=1}^{\gk_n-1}\sfH_i + \sfRH_n,
\end{align}
where $\sfRT_n$ and $\sfRH_n$ are the endpoint residuals defined in Lemma~\ref{lem:end-residual}. 

For the laws of large numbers, we have $\E\sfT_1<\infty$: under Condition~B.\ref{ass:B1} this follows from Proposition~\ref{prop:k-moment} with $k=1$, while under Condition~B.\ref{ass:B2} it follows from Lemma~\ref{lem:B2-pivot-env} (iv).
The renewal-reward strong law yields
\begin{align*}
    \frac{1}{n}\sum_{i=1}^{\gk_n-1}\sfT_i \to \frac{\mu_{\sfT}}{\mu_{\sfP}}
    \quad\text{a.s.}\quad\text{and}\quad
    \frac{1}{n}\sum_{i=1}^{\gk_n-1}\sfH_i \to \frac{\mu_{\sfH}}{\mu_{\sfP}}
    \quad \text{a.s.}
\end{align*}
By Lemma~\ref{lem:end-residual}, $\{\sfRT_n\}_{n\ge1}$ is tight, which implies $\sfRT_n/n \stackrel{\pr}{\to} 0$, proving weak LLN for $T_n$~\eqref{eq:B1-weak-lln}. 
For the hop-count, Lemma~\ref{lem:end-residual} also gives $\sup_{n\ge1}\E[(\sfRH_n)^2] < \infty$. By Markov's inequality, 
\[
\pr(\sfRH_n > \eps n) \le O(1/n^2)
\]
for any $\eps>0$. Since this is summable over $n$, the Borel--Cantelli lemma implies $\sfRH_n/n \to 0$ a.s. This proves the strong LLN for $H_n$~\eqref{eq:hop-lln}. Moreover, the standard delayed renewal-reward estimate gives 
\begin{equation*}
	\E\left(\sum_{i=1}^{\gk_n-1}\sfH_i\right) = n\frac{\mu_{\sfH}}{\mu_{\sfP}} + O(1).
\end{equation*}
Since $\sup_{n\ge1}\E\sfRH_n < \infty$ by Lemma~\ref{lem:end-residual}, we obtain $\E H_n = n\frac{\mu_{\sfH}}{\mu_{\sfP}} + O(1)$, which implies the mean convergence in~\eqref{eq:hop-lln}.

For the central limit theorems, we have $\E\sfT_1^2<\infty$: under Condition~B.\ref{ass:B1} this follows from Proposition~\ref{prop:k-moment} with $k=2$, while under Condition~B.\ref{ass:B2} it follows from Lemma~\ref{lem:B2-pivot-env} (iv). Define the bulk renewal-reward processes
\begin{align*}
    R_T(t) := \sum_{i=1}^{\sfN(t)}\sfT_i,
    \qquad
    R_H(t) := \sum_{i=1}^{\sfN(t)}\sfH_i,
\end{align*}
where $\sfN(t) := \max\{m\ge0 : \sum_{i=1}^m \sfP_i \le t\}$. Since $\rho_{m+1} = \rho_1 + \sum_{i=1}^m \sfP_i$, the bulk sums in~\eqref{eq:T-H-decomp} can be expressed as
\begin{equation*}
    \sum_{i=1}^{\gk_n-1}\sfT_i = R_T(n-M_k-\rho_1)
    \qquad
    \sum_{i=1}^{\gk_n-1}\sfH_i = R_H(n-M_k-\rho_1).
\end{equation*}
Since $(M_k+\rho_1)/\sqrt n\to0$ a.s., Lemma~\ref{lem:renewal-reward-clt} applies simultaneously to both processes, yielding
\begin{align*}
    \frac{R_T(n-M_k-\rho_1)-n{\mu_{\sfT}}/{\mu_{\sfP}}}{\sqrt n} \Longrightarrow \N(0,\gs_{\mathrm{FPP}}^2),
    \qquad
    \frac{R_H(n-M_k-\rho_1)-n{\mu_{\sfH}}/{\mu_{\sfP}}}{\sqrt n} \Longrightarrow \N(0,\gs_{\mathrm{Hop}}^2).
\end{align*}
Applying Slutsky's theorem to include the $o_{\pr}(\sqrt{n})$ endpoint residuals proves~\eqref{eq:B1-clt}. Since $\E H_n = n\frac{\mu_{\sfH}}{\mu_{\sfP}} + O(1)$, replacing the deterministic centering $n\frac{\mu_{\sfH}}{\mu_{\sfP}}$ with $\E H_n$ proves~\eqref{eq:hop-clt}.

It remains to prove the joint convergence. Fix $(a,b)\in\dR^2$ and consider the scalar renewal reward
\begin{align*}
    Y_i^{a,b}:= a\sfT_i+b\sfH_i.
\end{align*}
The sequence $\{(\sfP_i,Y_i^{a,b})\}_{i\ge1}$ is i.i.d.~and has finite
second moments. Applying Lemma~\ref{lem:renewal-reward-clt} to this
reward, and using $(M_k+\rho_1)/\sqrt n\to0$ almost surely, gives
\begin{align*}
&
\frac{
    aR_T(n-M_k-\rho_1)
    +
    bR_H(n-M_k-\rho_1)
    -
    n(a\mu_{\sfT}+b\mu_{\sfH})/\mu_{\sfP}
}{\sqrt n}
\\
&\hspace{2cm}
\Longrightarrow
\N\left(
    0,\,
    \frac1{\mu_{\sfP}}
    \var\left(
        a\left(
            \sfT_1-\frac{\mu_{\sfT}}{\mu_{\sfP}}\sfP_1
        \right)
        +
        b\left(
            \sfH_1-\frac{\mu_{\sfH}}{\mu_{\sfP}}\sfP_1
        \right)
    \right)
\right).
\end{align*}
By~\eqref{eq:joint-cov-def}, the limiting variance equals $(a,b)\Sigma_\ell(a,b)^T$. Hence, the Cram\'er--Wold theorem yields
\begin{align*}
    \frac1{\sqrt n}
    \begin{pmatrix}
        R_T(n-M_k-\rho_1)
        -
        n\mu_{\sfT}/\mu_{\sfP}
        \\[2mm]
        R_H(n-M_k-\rho_1)
        -
        n\mu_{\sfH}/\mu_{\sfP}
    \end{pmatrix}
    \Longrightarrow
    \N_2(\mvzero,\Sigma_\ell).
\end{align*}
Lemma~\ref{lem:end-residual} gives $\sfRT_n/\sqrt{n}\to0$ and $\sfRH_n/\sqrt{n}\to0$ in probability. 
Combining this with~\eqref{eq:T-H-decomp} and applying Slutsky's theorem proves~\eqref{eq:B1-joint-clt}.
\end{proof}

The proof of the hop-count limit theorem used only the additivity of the observable along forced pivot decompositions and the deterministic bound by the number of edges.  We record the following consequences, which are useful for standard geometric statistics of the selected geodesic.

\begin{cor}[Bounded additive statistics of geodesics]
\label{cor:geo-stat}
Assume that $\ell\ge2$ is fixed and that $\go$ is non-deterministic. Let $\pi_n^\star$ denote the uniquely selected minimum-hop geodesic from $0$ to $n$, and let $\cP$ be the collection of all paths $\pi$ in $(\dZ,\cE^{(\ell)})$. Consider a path statistic $\Phi:\cP \to \dR^d$ that is translation-covariant and additive under concatenation, \ie\
\[
\Phi_{\tau_z\go}(\pi+z)=\Phi_\go(\pi)
\text{ and }\
\Phi(\pi^{(1)}\circ\pi^{(2)}) = \Phi(\pi^{(1)}) + \Phi(\pi^{(2)}).
\]
Assume further that $\norm{\Phi(\pi)} \le C|\pi|$ for some deterministic constant $C<\infty$. For each regeneration block, we define 
 \[
 \Phi_i := \Phi(\pi^\star(\rho_i,\rho_{i+1}))
 \quad 1\le i\le\gk_n-1,
 \quad
 \Phi_n := \Phi(\pi_n^\star),
 \text{ and }\
 \mvmu_\Phi := \E \Phi_1.
 \]
Then the following properties hold:
\begin{enumeratei}
\item \label{item:expectation} \textbf{Expectation.} The exact expectation satisfies
\begin{equation*}
    \E \Phi_n = n \frac{\mvmu_\Phi}{\mu_{\sfP}} + O(1).
\end{equation*}

\item \label{item:J-CLT}\textbf{Joint CLT.} In the finite variance passage-time regime (\ie\ $\E\go^2<\infty$ or Condition~\textup{A.\ref{ass:A2}} holds with $\gc>2/\lc$), 
\begin{equation*}
    n^{-1/2}
    \begin{pmatrix}
        T_n - n \mu_{\sfT}/\mu_{\sfP} \\
        H_n - n \mu_{\sfH}/\mu_{\sfP} \\
        \Phi_n - n \mvmu_\Phi/\mu_{\sfP}
    \end{pmatrix}
    \Rightarrow
    \N_{d+2}(\mvzero, \Sigma_{d+2}),
\end{equation*}
where the $(d+2)\times(d+2)$ asymptotic covariance matrix is given by
\begin{equation*}
    \Sigma_{d+2} := \frac{1}{\mu_{\sfP}} \cov\left( \begin{pmatrix} \sfT_1 \\ \sfH_1 \\ \Phi_1 \end{pmatrix} - \frac{\sfP_1}{\mu_{\sfP}} \begin{pmatrix} \mu_{\sfT} \\ \mu_{\sfH} \\ \mvmu_\Phi \end{pmatrix} \right).
\end{equation*}
Due to \textnormal{(i)}, the expectation $\E \Phi_n$ can be used for centering.

\item \label{item:ratio-limit}\textbf{Empirical ratio limits.} Let $\Psi\ge 0$ be another scalar statistic satisfying the same assumptions with $\mvmu_\Psi := \E\Psi_1 > 0$, and let $\Psi_n := \Psi(\pi_n^\star)$. For definiteness, we set the ratio equal to $0$ on the event $\{\Psi_n = 0\}$. As $n\to\infty$, the empirical ratio converges almost surely:
\begin{equation*}
    \frac{\Phi_n}{\Psi_n} \to  \frac{\mvmu_\Phi}{\mvmu_\Psi} \quad \text{a.s.}
\end{equation*}
Moreover, the ratio satisfies the central limit theorem:
\begin{equation*}
    \sqrt{\Psi_n}\left(\frac{\Phi_n}{\Psi_n} - \frac{\mvmu_\Phi}{\mvmu_\Psi}\right) 
    \Rightarrow 
    \N_d\left(\mvzero,\, \frac{1}{\mvmu_\Psi} \cov\left(\Phi_1 - \frac{\mvmu_\Phi}{\mvmu_\Psi}\Psi_1 \right)\right).
\end{equation*}

\item \label{item:F-CLT}\textbf{Functional CLT.} Let $\mvS_m^\Phi := \sum_{i=1}^m \Phi_i$. In the Skorokhod space $D([0,\infty), \dR^d)$ equipped with the $J_1$ topology, the renewal skeleton process converges to a Brownian motion:
\begin{equation*}
    \left( \frac{1}{\sqrt{n}} \left( \mvS_{\sfN(nt)}^\Phi - nt \frac{\mvmu_\Phi}{\mu_{\sfP}} \right) \right)_{t \ge 0} 
    \Rightarrow 
    \left( B_t^\Phi \right)_{t \ge 0},
\end{equation*}
where $B^\Phi$ is a centered $\dR^d$-valued Brownian motion with covariance function $\cov(B_s^\Phi, B_t^\Phi) = \min\{s,t\} \Sigma_\Phi$ and $\Sigma_\Phi := \frac{1}{\mu_{\sfP}} \cov(\Phi_1 - \frac{\mvmu_\Phi}{\mu_{\sfP}} \sfP_1)$.
\end{enumeratei}
\end{cor}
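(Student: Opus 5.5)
The plan is to repeat the proof of Theorem~\ref{thm:LLN-CLT}, replacing the hop count $\sfH_i$ by the vector statistic $\Phi_i$. The only structural inputs needed are an exact decomposition of $\Phi_n$ along pivots, i.i.d.\ blocks, and moment control. For the decomposition, the forcing property (Lemma~\ref{lem:pivot}, Lemma~\ref{lem:B2-kpivot-forcing}) and the concatenation consistency $\pi^\star(u,w)=\pi^\star(u,v)\circ\pi^\star(v,w)$ give
\[
\pi_n^\star=\pi^\star(0,\rho_1)\circ\pi^\star(\rho_1,\rho_2)\circ\cdots\circ\pi^\star(\rho_{\gk_n},n).
\]
Additivity of $\Phi$ then yields
\[
\Phi_n=\sum_{i=1}^{\gk_n-1}\Phi_i+\fR^\Phi_n,
\qquad
\fR^\Phi_n:=\Phi(\pi^\star(0,\rho_1))+\Phi(\pi^\star(\rho_{\gk_n},n)).
\]
The bound $\norm{\Phi(\pi)}\le C|\pi|$ gives $\norm{\fR^\Phi_n}\le C\,\sfRH_n$ and $\norm{\Phi_i}\le C\sfH_i\le C\sfP_i$.

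For independence, by Lemma~\ref{lem:B1-pivot-env}(i)--(ii) and Lemma~\ref{lem:B2-pivot-env}(i)--(ii), the selected path $\pi^\star(\rho_i,\rho_{i+1})$ is a measurable function of $\Env[\rho_i,\rho_{i+1}]$. This uses that the lexicographic selection is shift-invariant and that a minimum-hop geodesic lies inside the block. Together with translation covariance, it follows that $\{(\sfP_i,\sfT_i,\sfH_i,\Phi_i)\}_{i\ge1}$ is i.i.d. Since $\norm{\Phi_i}\le C\sfP_i$ and $\sfP_i$ has geometric tails, $\Phi_1$ has all moments. By Lemma~\ref{lem:end-residual}, $\fR^\Phi_n$ is bounded in every $L^q$ uniformly in $n$.

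The four parts then follow from renewal theory.
\begin{enumeratei}
\item The delayed renewal-reward mean estimate, exactly as used for $\E H_n$ in the proof of Theorem~\ref{thm:LLN-CLT}, gives
\[
\E\sum_{i=1}^{\gk_n-1}\Phi_i=n\mvmu_\Phi/\mu_{\sfP}+O(1),
\]
and $\sup_n\E\norm{\fR^\Phi_n}<\infty$ completes (i).
\item For the joint CLT (ii), apply Cram\'er--Wold. For each linear combination $a\sfT_i+b\sfH_i+\langle c,\Phi_i\rangle$, the reward has a finite second moment: for $\sfT_i$ by Proposition~\ref{prop:k-moment} with $k=2$, or Lemma~\ref{lem:B2-pivot-env}(iv). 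Lemma~\ref{lem:renewal-reward-clt} then applies at time $n-M_k-\rho_1$, and Slutsky's theorem absorbs the residuals and the $O(1)$ shift. Part (i) justifies centering by $\E\Phi_n$.
\item For the ratio limits (iii), the a.s.\ LLN $\Phi_n/n\to\mvmu_\Phi/\mu_{\sfP}$ and $\Psi_n/n\to\mvmu_\Psi/\mu_{\sfP}>0$ follow as for $H_n$: the renewal-reward SLLN plus Borel--Cantelli applied to the residual via $\sup_n\E\norm{\fR^\Phi_n}^2<\infty$. Their quotient gives the a.s.\ limit, and $\Psi_n>0$ eventually. For the ratio CLT, write
\[
\sqrt{\Psi_n}\Bigl(\frac{\Phi_n}{\Psi_n}-\frac{\mvmu_\Phi}{\mvmu_\Psi}\Bigr)=\sqrt{\frac{n}{\Psi_n}}\cdot\frac{1}{\sqrt n}\Bigl(\Phi_n-\frac{\mvmu_\Phi}{\mvmu_\Psi}\Psi_n\Bigr).
\]
The second factor is a renewal-reward CLT for the centered reward $\Phi_i-(\mvmu_\Phi/\mvmu_\Psi)\Psi_i$, which has mean zero, so no deterministic centering survives. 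Its variance is $\mu_{\sfP}^{-1}\cov(\Phi_1-\tfrac{\mvmu_\Phi}{\mvmu_\Psi}\Psi_1)$, and the factor $\sqrt{n/\Psi_n}\to\sqrt{\mu_{\sfP}/\mvmu_\Psi}$ converts this to the stated covariance. This part needs no assumption on $\go$.
\item For the functional CLT (iv), apply Donsker's theorem to the i.i.d.\ centered vectors $\Phi_i-(\mvmu_\Phi/\mu_{\sfP})\sfP_i$ and compose with the time change $\sfN(nt)/n\to t/\mu_{\sfP}$, which converges uniformly on compacts a.s. The random-time-change continuity theorem in $J_1$ gives the result.
\end{enumeratei}

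In (iv) the difference $nt-S^{\sfP}_{\sfN(nt)}$ must be controlled. It is bounded by the current gap, and
\[
\max_{m\le Cn}\sfP_m=O(\log n)=o(\sqrt n)
\]
a.s.\ by the exponential tails, which makes this error negligible uniformly on compacts. The main obstacle is the measurability and i.i.d.\ claim for $\Phi_i$, which requires the selected geodesic to be a function of the block environment alone. I would argue it from the localization statements in Lemmas~\ref{lem:B1-pivot-env}(i) and \ref{lem:B2-pivot-env}(i): any minimum-hop geodesic leaving $[\rho_i,\rho_{i+1}]$ can be rerouted without increasing weight or hops. Hence the lexicographic choice among in-block minimum-hop geodesics coincides with the global choice, provided the order compares paths in a way unaffected by excursions.
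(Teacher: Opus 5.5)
Your proposal is correct and is essentially the paper's proof. The steps match: the pivot decomposition of $\pi_n^\star$ with $\norm{\Phi_i}\le C\sfH_i\le C\sfP_i$ and $\norm{\fR^\Phi_n}\le C\sfRH_n$, residual control from Lemma~\ref{lem:end-residual}, and then the delayed renewal-reward mean estimate, Cram\'er--Wold plus Slutsky, the delta method, and Donsker with a random time change. You carry out the delta method explicitly through the mean-zero reward $\Phi_i-\frac{\mvmu_\Phi}{\mvmu_\Psi}\Psi_i$, and you supply the $O(\log n)$ overshoot bound that the paper leaves implicit.

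The obstacle you flag at the end resolves cleanly, because every minimum-hop geodesic from $\rho_i$ to $\rho_{i+1}$ already lies in the block. Under Condition~B.\ref{ass:B1}, an excursion forces a revisit of a pivot, and removing that cycle strictly lowers the hop count. Under Condition~B.\ref{ass:B2}, an excursion can be shortcut with a strict weight decrease. Hence the candidate sets coincide and any fixed order selects the same path. The only tacit extra hypothesis, which the paper shares, is that $\Phi_\go(\pi)$ depends on $\go$ only through edges inside the block, as in all the examples of Remark~\ref{rem:geo-examples}; this is what the i.i.d.\ claim for $\Phi_i$ needs.
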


\begin{proof}
By the pivot decomposition (Lemmas~\ref{lem:B1-pivot-env} and~\ref{lem:B2-pivot-env}), the selected geodesic $\pi_n^\star$ concatenates exactly at the pivot nodes. Thus, it decomposes into independent and identically distributed subpaths $\pi^\star(\rho_i, \rho_{i+1})$ for $1\le i \le \gk_n-1$, along with endpoint residuals, yielding
\begin{equation*}
    \Phi_n = \sum_{i=1}^{\gk_n-1}\Phi_i + \sfR_n^\Phi.
\end{equation*}
Since $\Phi$ is additive under path concatenation and $\norm{\Phi(\pi)} \le C|\pi|$, we have $\norm{\sfR_n^\Phi} \le C\sfRH_n$. By Lemma~\ref{lem:end-residual}, $\sfRH_n$ is tight, which implies that $\sfR_n^\Phi$ is also tight. Thus, $\Phi$ has the exact same regenerative structure as $T_n$ and $H_n$, with $\norm{\Phi_i} \le C\sfH_i \le C\sfP_i$. All second moments needed are finite since the pivot gaps have exponential tails.

The $O(1)$ expectation expansion (\ref{item:expectation}) follows from the standard delayed renewal-reward theorem, as the endpoint residual expectations are uniformly bounded (Lemma~\ref{lem:end-residual}). The joint CLT (\ref{item:J-CLT}) is an immediate extension of Theorem~\ref{thm:LLN-CLT} to $(d+2)$-dimensions, utilizing the exact same Cram\'er--Wold and Slutsky arguments. The ratio limit theorems for $\Phi_n/\Psi_n$ (\ref{item:ratio-limit}) follow directly from the continuous mapping theorem and the delta method. Finally, Donsker's theorem applied to the i.i.d.\ centered sequence $\Phi_i - (\mvmu_\Phi/\mu_P)\sfP_i$ yields the functional Brownian limit on the renewal skeleton (\ref{item:F-CLT}).
\end{proof}

\begin{rem}[Applications to geodesic geometry]\label{rem:geo-examples}
Corollary~\ref{cor:geo-stat} provides a universal framework for analyzing the geometry of geodesics. Several natural geometric observables immediately fall into this class and thus follow essentially for free by exact regeneration:
\begin{enumeratea}
    \item \textbf{Hop count.} Taking $\Phi(\pi)=|\pi|$ recovers the hop-count LLN and CLT from Theorem~\ref{thm:LLN-CLT}.
     \item \textbf{Jump-length count vector.} Let $\Phi(\pi)_d$ be the number of length-$d$ edges in $\pi$, and define $\mvJ_n:=\Phi(\pi_n^\star)$. Introducing the corresponding block variables $\sfJ_i:=\Phi(\pi^\star(\rho_i,\rho_{i+1}))$ and their partial sums $\mvS^{\sfJ}_m:=\sum_{i=1}^m \sfJ_i$, this choice directly yields the joint CLT for $(T_n, H_n, \mvJ_n)$ and the corresponding functional limits for the jump-count processes.
    \item \textbf{Empirical jump measures and lengths.} Applying the ratio limit \textup{(\ref{item:ratio-limit})} with $\Psi(\pi) = |\pi|$ provides the CLT for the empirical jump distribution $\widehat{\mvp}_{n,d} := (\mvJ_n)_d/H_n$ and the empirical mean jump length $\mvL_n := \sum_{d=1}^\ell d\, \widehat{\mvp}_{n,d}$.
    \item \textbf{Bounded edge-marks and backtracking.} For any bounded measurable function $f:\{-\ell,\ldots,-1,1,\ldots,\ell\}\times[0,\infty)\to\dR$, and a path $\pi=(v_0,\ldots,v_{|\pi|})$, the path sum $\Phi(\pi) := \sum_{r=1}^{|\pi|} f(|v_r-v_{r-1}|, \go_{v_{r-1},v_r})$ is a bounded additive statistic. Applying the ratio limit \textup{(\ref{item:ratio-limit})} with $\Psi(\pi) = |\pi|$ immediately gives the a.s.\ convergence and CLT for the empirical average $\Phi_n/H_n$. In particular, the backward-step fraction $\frac{1}{H_n} \sum_{(u,v) \in \pi_n^\star} \ind_{\{v-u < 0\}}$ converges to a limit.
\end{enumeratea}
\end{rem}

%%%%%%%%%%%%%%%%%%%%%%%%%%%%%%%%%%%%%%%%%%%%%%%%%%%%%%%%%%%%%%%%%%%%%%%%%%%%%%%%%%%
\section{Stable Limit Theorem under Assumption A.\ref{ass:A2}}\label{sec:stable}
%%%%%%%%%%%%%%%%%%%%%%%%%%%%%%%%%%%%%%%%%%%%%%%%%%%%%%%%%%%%%%%%%%%%%%%%%%%%%%%%%%%

In this section, we prove the stable limit theorem in the microscopic regime under the heavy-tail assumption. Throughout the section, we denote the tail probability by
\begin{align*}
    \bar F(x):=\pr(\go>x).
\end{align*}
Since Condition~A.\ref{ass:A2} implies Condition~B.\ref{ass:B1}, we can apply the regenerative pivot decomposition established in the preceding section. The primary technical ingredient required here is the exact tail asymptotic for the renewal-block passage time $\sfT_1$. Our analysis proceeds in two steps: first, we exploit the graph's cut geometry to estimate the tail probability of a single block; second, we use this one-block estimate to derive the stable limit.

To study the local geometry, for integers $r<s$, let
\begin{align*}
    \cE[r,s] := \{(p,q):r\le p<q\le s,\ q-p\le\ell\}
\end{align*}
be the edge set of the induced graph on $\{r,r+1,\ldots,s\}$. Recall the unit cut $\cC_u$ defined in~\eqref{def:unit_cut}, which contains exactly $\lc$ edges. We define the minimum edge weight across this cut as
\begin{align*}
    X_u :=\min_{e\in\cC_u}\go_e.
\end{align*}
By the independence of edge weights and the fact that $|\cC_u| = \lc$, we obtain $\pr(X_u>x)=\bar F(x)^{\lc}$. Thus, $\{X_u>x\}$ is precisely the event that every edge crossing the boundary between $u$ and $u+1$ has weight larger than $x$.

To leverage this property, we need to understand how the removal of small sets of edges affects connectivity. The next lemma establishes a rigid geometric dichotomy for any separating edge set of cardinality at most $\lc$: either it fails to completely disconnect the two endpoint windows, or it perfectly coincides with a single unit cut.

\begin{lem}\label{lem:cut-geometry}
Fix $j>2\ell-2$ and set
\begin{align*}
    \fL_0:=\{0,1,\ldots,\ell-1\},
    \qquad
    \fR_j:=\{j-\ell+1,\ldots,j\}.
\end{align*}
For $u\in\{\ell-1,\ldots,j-\ell\}$, let
\begin{align*}
    \cC_u^{(j)}
    :=
    \{(x,y)\in\cE[0,j]:x\le u<y\}.
\end{align*}
Then, for every $S\subseteq\cE[0,j]$, the following hold.
\begin{enumeratei}
    \item If $|S|<\lc$, then deleting $S$ does not disconnect $\fL_0$ from $\fR_j$.

    \item If $|S|=\lc$ and deleting $S$ disconnects $\fL_0$ from $\fR_j$, then
    $S=\cC_u^{(j)}$ for a unique $u\in\{\ell-1,\ldots,j-\ell\}$.  Moreover, for every
    $e\in\cC_u^{(j)}$, there is a simple $\fL_0$-$\fR_j$ path whose unique edge in
    $\cC_u^{(j)}$ is $e$.
\end{enumeratei}
\end{lem}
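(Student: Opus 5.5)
The plan is to run the length-class argument from the Menger remark after Lemma~\ref{lem:path_structure}, applied to the set of vertices reachable from $\fL_0$, and to read off the equality case from the length-$1$ edges.

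\textbf{Setup.} Fix $S\subseteq\cE[0,j]$ and suppose that deleting $S$ disconnects $\fL_0$ from $\fR_j$ in the induced graph on $\{0,\ldots,j\}$. Let $A$ be the set of vertices reachable from $\fL_0$ in $\cE[0,j]\setminus S$. Then $\fL_0\subseteq A$ and $A\cap\fR_j=\eset$. Every edge of $\cE[0,j]$ with exactly one endpoint in $A$ must lie in $S$, since otherwise its outer endpoint would also be reachable. Write $\partial A$ for the set of such edges, so $\partial A\subseteq S$.

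\textbf{Lower bound on $|\partial A|$.} Fix a length $m\in\{1,\ldots,\ell\}$. The edges of length $m$ in $\cE[0,j]$ split into $m$ chains, one for each residue class $r\in\{0,\ldots,m-1\}$, namely $r,r+m,r+2m,\ldots$.
\begin{itemize}
\item The first vertex $r\le m-1\le \ell-1$ of each chain lies in $\fL_0\subseteq A$.
\item The last vertex of each chain is at least $j-m+1\ge j-\ell+1$, so it lies in $\fR_j\subseteq A^c$.
\end{itemize}
Hence each chain contains at least one edge of $\partial A$, and $\partial A$ contains at least $m$ edges of length $m$. Summing over $m$ gives $|S|\ge|\partial A|\ge\sum_{m=1}^{\ell}m=\lc$. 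The contrapositive is part~(i).

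\textbf{Equality case.} For part~(ii), assume $|S|=\lc$. All the inequalities above are then equalities, so $S=\partial A$ and each of the $\lc$ chains contains exactly one edge of $\partial A$. A chain that starts in $A$, ends in $A^c$, and switches membership exactly once meets $A$ in an initial segment.
\begin{itemize}
\item The length-$1$ chain is $0,1,\ldots,j$, so applying this to it gives $A=\{0,\ldots,u\}$ for some $u$.
\item Since $\fL_0\subseteq A$, we get $u\ge\ell-1$; since $A\cap\fR_j=\eset$, we get $u\le j-\ell$.
\item With $A=\{0,\ldots,u\}$, the boundary $\partial A$ is exactly $\{(x,y)\in\cE[0,j]:x\le u<y\}=\cC_u^{(j)}$, so $S=\cC_u^{(j)}$.
\item The index $u$ is unique because $\cC_u^{(j)}$ is the only one of these cuts containing $(u,u+1)$.
\end{itemize}

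\textbf{The path through a prescribed edge.} Given $e=(x,y)\in\cC_u^{(j)}$, build the path in three monotone pieces.
\begin{itemize}
\item If $x\in\fL_0$, start at $x$. Otherwise $x>\ell-1$, so start at $\ell-1\in\fL_0$ and take unit steps rightward to $x$. All these vertices are at most $u$, so no edge of the cut is used.
\item Take the edge $e$ to $y$.
\item Take unit steps rightward from $y$ to $j\in\fR_j$. All these vertices exceed $u$, so again no cut edge is used.
\end{itemize}
The vertex sequence is strictly increasing, so the path is simple, and its unique edge in $\cC_u^{(j)}$ is $e$.

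The one step that needs care is the observation that $\partial A\subseteq S$ together with the exact per-chain count forces each chain to meet $A$ in an initial segment. Once that is in place, the length-$1$ chain pins $A$ down to an interval $\{0,\ldots,u\}$ and the rest follows directly.
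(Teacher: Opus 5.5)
Your proposal is correct and follows essentially the same argument as the paper: the residue-class count of length-$m$ boundary edges for the reachable set $A$, then the equality case forcing a single nearest-neighbor boundary edge, hence $A=\{0,\ldots,u\}$ and $S=\cC_u^{(j)}$. You then build a monotone path through the prescribed edge. The only cosmetic difference is that you run the last piece of that path to $j$, whereas the paper stops at $j-\ell+1$.
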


\begin{proof}
Let $A\subseteq\{0,1,\ldots,j\}$ satisfy
\begin{align*}
    \fL_0\subseteq A,
    \qquad
    A\cap\fR_j=\eset,
\end{align*}
and let $\partial A$ denote the set of edges in $\cE[0,j]$ with one endpoint in $A$ and the other
in $A^c$.

Fix $m\in\{1,\ldots,\ell\}$ and consider only the edges of length $m$.  For each residue class modulo $m$, list its vertices in $\{0,1,\ldots,j\}$ from left to right. Consecutive vertices in this list are joined by an edge of length $m$.  The leftmost vertex lies in $\{0,\ldots,m-1\}\subseteq\fL_0\subseteq A$, whereas the rightmost vertex lies in $\{j-m+1,\ldots,j\}\subseteq\fR_j\subseteq A^c$.  

Thus, in each residue class, at least one length-$m$ edge has one endpoint in $A$ and the other in $A^c$.  Since there are $m$ residue classes modulo $m$, $\partial A$ contains at least $m$ edges of length $m$. 
Summing over $m=1,\ldots,\ell$ gives
\begin{align}\label{eq:mincut-lower-stable}
    |\partial A|
    \ge \sum_{m=1}^{\ell} m
    =  \lc.
\end{align}

Now let $S$ be any edge set separating $\fL_0$ from $\fR_j$.  After deleting the edges in $S$, let $A$ be the set of vertices that can be reached from some vertex of $\fL_0$.  Then $\fL_0\subseteq A$, $A\cap\fR_j=\eset$, and every edge in $\partial A$ must belong to $S$. Hence~\eqref{eq:mincut-lower-stable} implies
\begin{align*}
    |S|\ge|\partial A|\ge\lc.
\end{align*}
On the other hand, each $\cC_u^{(j)}$, with $u\in\{\ell-1,\ldots,j-\ell\}$, separates $\fL_0$ from $\fR_j$ and satisfies
$
    |\cC_u^{(j)}| = \sum_{m=1}^{\ell} m = \lc.
$
Therefore, the minimum cut size is $\lc$.

 Suppose now that $S$ separates $\fL_0$ from $\fR_j$ and $|S|=\lc$. Define $A$ as above. Since $\partial A\subseteq S$,~\eqref{eq:mincut-lower-stable} yields
\begin{align*}
    \lc
    \le |\partial A|
    \le |S|
    = \lc.
\end{align*}
Thus $S=\partial A$.  Equality in~\eqref{eq:mincut-lower-stable} implies, in particular, that $\partial A$ contains exactly one nearest-neighbor edge.  Hence there is a unique $u\in\{0,\ldots,j-1\}$ such that one of $u,u+1$ lies in $A$ and the other lies in $A^c$. Since $0\in A$ and $j\in A^c$, it follows that
$
    A=\{0,1,\ldots,u\}.
$
The conditions $\fL_0\subseteq A$ and $A\cap\fR_j=\eset$ imply $\ell-1\le u\le j-\ell$.
Therefore, $S=\partial A=\cC_u^{(j)}.$
The value of $u$ is unique since $\cC_u^{(j)}$ contains the nearest-neighbor edge $(u,u+1)$.

 Fix $e=(x,y)\in\cC_u^{(j)}$. If $x\in\fL_0$, start at $x$; otherwise, follow nearest-neighbor edges from $\ell-1$ to $x$ inside $\{0,\ldots,u\}$. Traverse the edge $e$. If $y\in\fR_j$, stop at $y$; otherwise, follow nearest-neighbor edges from $y$ to $j-\ell+1$ inside $\{u+1,\ldots,j\}$.  The resulting path is simple, and $e$ is its only edge crossing from $\{0,\ldots,u\}$ to $\{u+1,\ldots,j\}$. Hence, it uses no other edge of $\cC_u^{(j)}$.
\end{proof}

The previous lemma shows a geometric bottleneck: the only way to separate the two ends of a block by removing just $\lc$ edges is to remove an entire unit cut. Consequently, an exceptionally large passage time across a block is primarily caused by a single rare event: all $\lc$ edges of some interior unit cut being simultaneously very heavy. To quantify the frequency of such local events within a randomly sized renewal block, we need a standard ergodic identity.

\begin{lem}\label{lem:ergodic}
Let $\rho_1<\rho_2<\cdots$ be the sequence of pivot nodes. For any non-negative measurable function $f:\dR^{\cE^{(\ell)}}\to\dR_+$ defined on the edge environment $\mvgo$, we have
\begin{align}\label{eq:ergodic}
    \E\left( \sum_{u=\rho_1}^{\rho_2-1} f(\theta_u\mvgo) \right) 
    = \mu_{\sfP}\,\E[f(\mvgo)],
\end{align}
where $\theta_u$ is the spatial shift operator by $u$, and $\mu_{\sfP} = \E(\rho_2-\rho_1)$.
\end{lem}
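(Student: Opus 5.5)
We would prove \eqref{eq:ergodic} by the standard cycle (Palm) argument for stationary marked point processes, using a long-run average on both sides. Fix first a bounded nonnegative $f$ depending only on edges in a finite window $\cE[-K,K]$; the general case then follows by monotone convergence (truncate $f\wedge N$ and approximate by local functions, both sides being monotone in $f$).

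\emph{Step 1 (left side as a renewal-reward limit).} Set $Y_i:=\sum_{u=\rho_i}^{\rho_{i+1}-1} f(\theta_u\mvgo)$. Because $f$ is local, $Y_i$ depends only on the environment in a $K$-neighbourhood of the block $[\rho_i,\rho_{i+1}]$. This is slightly more than $\Env[\rho_i,\rho_{i+1}]$, so the sequence $(Y_i)$ is not exactly i.i.d. However, Lemma~\ref{lem:B1-pivot-env}(ii) (respectively Lemma~\ref{lem:B2-pivot-env}(ii)) shows that $(Y_i)$ is stationary. Moreover, blocks separated by more than $2K/\min\sfP$ indices are independent; more precisely, $Y_i$ and $Y_j$ are independent once a full block lies between their $K$-neighbourhoods. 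Since $0\le Y_i\le \|f\|_\infty \sfP_i$ and $\sfP_i$ has all moments, a strong law for such finitely dependent (or $m$-dependent after conditioning on gaps) stationary sequences gives
\[
\frac1m\sum_{i=1}^m Y_i\to \E Y_1 \quad\text{a.s.}
\]
To avoid this subtlety altogether, we will instead replace $f$ by a version measurable with respect to edges in $\cE[u,u+K]$ shifted appropriately, which costs only boundary terms. Alternatively, we prove the identity directly via the ergodic theorem below, which makes Step 1 only an expectation argument.

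\emph{Step 2 (spatial ergodic average).} The shift $\theta$ on the i.i.d.\ edge field is ergodic, so Birkhoff's theorem gives
\[
\frac1n\sum_{u=0}^{n-1} f(\theta_u\mvgo)\to \E f(\mvgo) \quad\text{a.s.}
\]
Decompose $[\rho_1,\rho_{\gk_n+1})$ into the blocks. The sum over $u\in[0,n)$ equals $\sum_{i=1}^{\gk_n}Y_i$ plus boundary terms bounded by $\|f\|_\infty(\rho_1+\sfP_{\gk_n})$, which are $o(n)$ a.s. by the exponential tails (Borel--Cantelli on $\sfP_i$). Combined with $\gk_n/n\to 1/\mu_{\sfP}$ a.s.\ from \eqref{eq:pivot-renewal-slln}, this yields
\[
\frac1{\gk_n}\sum_{i=1}^{\gk_n}Y_i\to \mu_{\sfP}\E f \quad\text{a.s.}
\]

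\emph{Step 3 (identify the limit with $\E Y_1$).} We need $\frac1m\sum_{i\le m} Y_i\to\E Y_1$. This is the main obstacle, because of the mild dependence across block boundaries. We would handle it by splitting the indices into residue classes modulo $r$, with $r$ large. Given the pivot gaps are at least $1$, we condition on the event that the blocks between consecutive chosen indices have total length $>2K$; the complement has probability tending to $0$ as $r$ grows. Alternatively and more cleanly, we apply the $L^2$ weak law: $\var(\sum_{i\le m}Y_i)=O(m)$, because $\cov(Y_i,Y_j)$ decays exponentially in $|i-j|$. Indeed, the covariance is nonzero only if the blocks $i+1,\dots,j-1$ have total length $\le 2K$, and that event has probability at most $\pr(\text{no pivot in a window})^{\,c|i-j|}$. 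This gives convergence in probability to $\E Y_1$, which suffices to identify the almost-sure limit in Step 2. Hence $\E Y_1=\mu_{\sfP}\E f$, which is \eqref{eq:ergodic} for bounded local $f$; monotone convergence finishes the proof.
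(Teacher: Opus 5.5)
\textbf{Verdict: there is a genuine gap, and the statement itself is false for general $f$.}

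Your argument takes the same route as the paper's proof: Birkhoff for $\sum_u f(\theta_u\mvgo)$, a law of large numbers over blocks, and $\gk_n/n\to1/\mu_{\sfP}$. It breaks at the claim that Lemma~\ref{lem:B1-pivot-env}(ii) makes $(Y_i)$ stationary. Step~3 needs this claim to identify the Ces\`aro limit with $\E Y_1$.

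That lemma concerns only $(\sfP_i,\Env[\rho_i,\rho_{i+1}])$. For a general local $f$, $Y_1$ also reads edges to the left of $\rho_1$. Conditionally on $\rho_1$, these edges are biased by the event that no vertex of $[0,\rho_1)$ is a pivot (inspection paradox). For large $i$, by contrast, the region left of $\rho_i$ is a typical block. So at best your Steps~2--3 identify $\lim_i\E Y_i=\mu_{\sfP}\E f(\mvgo)$, which is the cycle formula for a \emph{typical} block, not for the first one.

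This cannot be patched, because \eqref{eq:ergodic} fails for such $f$. Work under Condition~B.\ref{ass:B1} and let $f(\mvgo)=\ind_{\{0\text{ is a pivot and no vertex of }[-M,-1]\text{ is a pivot}\}}$.
\begin{enumerate}
\item Only $u=\rho_1$ contributes, so the block sum is $\ind_{\{\rho_1-\rho_0>M\}}$, where $\rho_0$ is the pivot preceding $\rho_1$.
\item The pivot event at $v$ is an intersection of independent events on three edge sets: edges with right endpoint $\le v$, edges crossing over $v$, and edges with left endpoint $\ge v$. On this event, the next pivot is determined by the last set alone.
\item Applying this at $v$ and at $\rho_1$ gives $\pr(v\text{ is a pivot and the next pivot is }v+g)=p_{a,\ell}\,\pr(\sfP_1=g)$. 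The ergodic theorem gives $\mu_{\sfP}p_{a,\ell}=1$.
\item Hence the gap straddling the origin is size-biased: $\pr(\rho_1-\rho_0=g)=g\,\pr(\sfP_1=g)/\mu_{\sfP}$. By stationarity, $\mu_{\sfP}\E f(\mvgo)=\pr(\sfP_1>M)$.
\end{enumerate}
Therefore
\[
\E\Bigl(\sum_{u=\rho_1}^{\rho_2-1}f(\theta_u\mvgo)\Bigr)-\mu_{\sfP}\E f(\mvgo)
=\frac{\E\bigl[\sfP_1\ind_{\{\sfP_1>M\}}\bigr]}{\mu_{\sfP}}-\pr(\sfP_1>M)
=\frac{\cov\bigl(\sfP_1,\ind_{\{\sfP_1>M\}}\bigr)}{\mu_{\sfP}}>0
\]
whenever $0<\pr(\sfP_1>M)<1$. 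Such $M$ exists because $\sfP_1$ takes both values $\ell-1$ and $\ell$ with positive probability (see the proof of Proposition~\ref{prop:pos-var}). The paper's proof has the same defect: it asserts that the block sums $\sum_{u=\rho_i}^{\rho_{i+1}-1}f(\theta_u\mvgo)$ are i.i.d., which fails for exactly this reason.

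\textbf{What is true.} Your Steps~2--3 do prove \eqref{eq:ergodic} for $f$ whose shifts $f(\theta_u\mvgo)$, $u\ge\rho_1$, see nothing strictly left of $\rho_1$ except the edges crossing over $\rho_1$. Examples are $f$ measurable with respect to the edges with right endpoint $\ge1$, or $f=\ind_{\{0\text{ is a pivot}\}}\,g$ with such a $g$.
\begin{enumerate}
\item For these $f$, each $Y_i$ is one fixed functional of the edges crossing over $\rho_i$ together with the edges having left endpoint $\ge\rho_i$.
\item By the decomposition above, conditionally on $\rho_i$ and on all edges with right endpoint $\le\rho_i$, this family is, after shifting, i.i.d.\ conditioned on $P_0^\times\cap P_0^+$, whatever $i$ is.
\item So $(Y_i)$ is genuinely stationary. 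It is a factor of the i.i.d.\ sequence of blocks augmented by the edges crossing over each pivot, hence ergodic, and Birkhoff gives $\frac1m\sum_{i\le m}Y_i\to\E Y_1$ a.s.
\end{enumerate}
This also replaces your covariance-decay claim. As stated, that claim needs a decoupling argument, because the event that the intermediate blocks are short is itself environment-dependent.

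The restricted class covers every application in the proof of Lemma~\ref{lem:T1-tail}: $\ind_{\{X_0>x\}}$, $\sum_m\ind_{\{X_0>x,\,X_m>x\}}$, and the bad-block indicator $Y_0$ defined there. Finally, pass from local to general $f$ in this class by a $\pi$--$\lambda$ argument, since both sides are finite measures in $f=\ind_A$. Monotone approximation by local functions does not work for general measurable $f$.
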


\begin{proof}
By the spatial ergodic theorem for the stationary sequence $\{f(\theta_u\mvgo)\}_{u\in\dZ}$, we have
\begin{align*}
    \lim_{n\to\infty} \frac{1}{\rho_{n+1}-\rho_1} \sum_{u=\rho_1}^{\rho_{n+1}-1} f(\theta_u\mvgo) = \E[f(\mvgo)] \quad \text{a.s.}
\end{align*}
On the other hand, since the blocks are i.i.d., the strong law of large numbers implies
\begin{align*}
    \lim_{n\to\infty} \frac{1}{n} \sum_{i=1}^n \sum_{u=\rho_i}^{\rho_{i+1}-1} f(\theta_u\mvgo) = \E\left( \sum_{u=\rho_1}^{\rho_2-1} f(\theta_u\mvgo) \right) \quad \text{a.s.}
\end{align*}
Since $(\rho_{n+1}-\rho_1)/n \to \mu_{\sfP}$ a.s., taking the ratio of the two limits yields \eqref{eq:ergodic}.
\end{proof}

With this identity, we can now establish the exact tail asymptotic for the block passage time.

\begin{lem}[Exact one-block tail]\label{lem:T1-tail}
Assume Condition~\textup{A.\ref{ass:A2}}, and let $\rho_1<\rho_2$ be consecutive pivot nodes. Then
\begin{align}\label{eq:Ttail}
    \bar F(x)^{-\lc}\cdot \pr\bigl(T(\rho_1,\rho_2)>x\bigr)
    \to
    \E(\rho_2-\rho_1)\,
    \text{ as }x\to\infty.
\end{align}
\end{lem}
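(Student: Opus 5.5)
Set $\sfT:=T(\rho_1,\rho_2)$; I would prove matching lower and upper bounds. The guiding picture is that $\{\sfT>x\}$ essentially coincides with $\bigcup_{u}\{X_u>x\}$, where $u$ ranges over the interior unit cuts of the block. The expected number of such $u$ with $X_u>x$ is, by Lemma~\ref{lem:ergodic} with $f(\mvgo)=\ind_{\{X_0>x\}}$, exactly $\mu_{\sfP}\bar F(x)^{\lc}$.

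\emph{Lower bound.} I would show $\pr(\sfT>x)\ge \E\sum_{u=\rho_1}^{\rho_2-1}\ind_{\{X_u>x\}}-\text{(error)}$. Any path from $\rho_1$ to $\rho_2$ must cross every cut $\cC_u$ with $\rho_1\le u<\rho_2$, so $X_u>x$ for some such $u$ forces $\sfT>x$. By Bonferroni,
\[
\pr(\sfT>x)\ge \E N_x-\E\bigl[N_x(N_x-1)\bigr],\qquad N_x:=\#\{u\in[\rho_1,\rho_2):X_u>x\}.
\]
The main term is $\mu_{\sfP}\bar F(x)^{\lc}$ by Lemma~\ref{lem:ergodic}. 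For the pair term, distinct cuts $\cC_u,\cC_v$ share edges but their union contains strictly more than $\lc$ edges; indeed it contains at least $\lc+1$, since the nearest-neighbor edges differ. Hence $\pr(X_u>x,X_v>x)\le \bar F(x)^{\lc+1}$. Summing over pairs within a block, the pair term is at most $\E[\sfP_1^2]\bar F(x)^{\lc+1}$ after a Hölder/truncation step to decouple the block length from the cut events. This is $o(\bar F(x)^{\lc})$ because $\sfP_1$ has exponential tails.

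\emph{Upper bound.} I would fix $\eps>0$ and condition on $\rho_2-\rho_1=j$, treating $j\le J$ and $j>J$ separately. For $j>J$, the bound $\pr(Z_j>y)\le Cj^{A}\bar F(y/2j)^{\lc}$ from the proof of Proposition~\ref{prop:k-moment}, together with the regular variation of $\bar F$, gives a contribution of at most $\sum_{j>J}\pr(\rho=j)^{1/2}Cj^{A'}\bar F(x)^{\lc}$, which is $\le\eps\bar F(x)^{\lc}$ for large $J$. Here I would use Cauchy--Schwarz to separate $\{\rho=j\}$ from the path weights, or exploit that the pivot events involve only light edges.

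For fixed $j\le J$ I would use Lemma~\ref{lem:cut-geometry}. Call an edge $\delta$-heavy if $\go_e>\delta x$. If $\sfT>x$ and no full interior cut is $\delta$-heavy, then the heavy edges do not separate $\fL_0$ from $\fR_j$, so there is a path avoiding them. That path has at most $j$ edges of weight $\le\delta x$, plus attachment edges below $a$. It therefore costs at most $j\delta x+2a<x$ once $\delta<1/(2J)$, which is a contradiction. Hence, apart from the cut event, $\sfT>x$ requires at least $\lc$ heavy edges forming a separating set. By Lemma~\ref{lem:cut-geometry}, a separating set with exactly $\lc$ edges is a full cut $\cC_u^{(j)}$; otherwise there are at least $\lc+1$ $\delta$-heavy edges among finitely many, an event of probability $O(\bar F(\delta x)^{\lc+1})=o(\bar F(x)^{\lc})$. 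This bounds the probability by the event that some $\cC_u$ is $\delta$-heavy, but I need the sharper event $X_u>(1-\eps)x$.

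\emph{Main obstacle.} The difficulty is sharpening $\delta x$ to $(1-\eps)x$. Suppose all edges of $\cC_u$ exceed $\delta x$ but the cheapest one, $e$, has $\go_e\le(1-\eps)x$. The second part of Lemma~\ref{lem:cut-geometry} supplies a path whose only edge in $\cC_u$ is $e$. Its other edges are not $\delta$-heavy unless an additional heavy edge is present, which has probability $O(\bar F(\delta x)^{\lc+1})$. So its cost is at most $(1-\eps)x+j\delta x+2a<x$ for $\delta\ll\eps/J$, again a contradiction. Hence
\[
\pr(\sfT>x,\rho_2-\rho_1=j)\le \E\Bigl[\ind_{\{\rho_2-\rho_1=j\}}\sum_{u}\ind_{\{X_u>(1-\eps)x\}}\Bigr]+o(\bar F(x)^{\lc}).
\]
The independence between the pivot conditions and the heavy cut edges needs care here; it is the same disjoint-edge reasoning as in Proposition~\ref{prop:k-moment}(i), restricted to the interior. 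Summing over $j\le J$, applying Lemma~\ref{lem:ergodic}, and using $\bar F((1-\eps)x)/\bar F(x)\to(1-\eps)^{-\gc}$, then letting $\eps\to0$ and $J\to\infty$, gives the claim.
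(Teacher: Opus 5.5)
Your lower bound is essentially the paper's: Bonferroni with the second factorial moment of $N_x$, plus $|\cC_u\cup\cC_v|\ge\lc+1$. A Hölder split with exponent close to $1$ is harmless there, because the pair event carries $\lc+1$ rather than $\lc$ factors of $\bar F(x)$. Your treatment of blocks with $\rho\le J$ via Lemma~\ref{lem:cut-geometry} is also the paper's mechanism.

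\textbf{The gap is the long-block contribution, $\rho>J$.} There you need $\limsup_{x}\bar F(x)^{-\lc}\pr(\sfT>x,\rho>J)\le\eps_J$ with $\eps_J\to0$, i.e.\ a bound at the \emph{exact} order $\bar F(x)^{\lc}$. Neither tool you suggest gives this.
\begin{itemize}
\item Cauchy--Schwarz yields $\pr(\rho=j,Z_j>x)\le\pr(\rho=j)^{1/2}\pr(Z_j>x)^{1/2}\le C\pr(\rho=j)^{1/2}j^{A/2}\bar F(x)^{\lc/2}$. More generally, any Hölder split produces $\bar F(x)^{\lc/q}$ with $q>1$, which is much larger than $\bar F(x)^{\lc}$. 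Unlike in Proposition~\ref{prop:k-moment}, you cannot afford to lose any power here.
\item The claim that pivot events involve only light edges is false. A pivot also requires every crossing edge to exceed $2a$. Moreover, $\{\rho=j\}$ contains the non-monotone event that no vertex in $(0,j)$ is a pivot, and this depends on the interior edges of your $\lc$ disjoint paths. So neither independence nor Harris--FKG decouples it from $\{Z_j>x\}$.
\end{itemize}

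The paper avoids this by letting the truncation level grow with $x$. Since $\rho$ has an exponential tail and $\bar F(x)^{\lc}$ is regularly varying, $\pr(\rho>B_0\log x)=o(\bar F(x)^{\lc})$ with no decoupling at all. On $\{\rho\le B_0\log x\}$ it runs exactly your cut-geometry argument, but with the $x$-dependent threshold $\eta_x=\eps x/(4(B_0\log x+2))$ in place of a fixed $\delta x$. Potter bounds give $\bar F(\eta_x)\le C(\log x)^{\gc+1}\bar F(x)$. Hence the event of at least $\lc+1$ $\eta_x$-heavy edges in the block still has probability $(\log x)^{O(1)}\bar F(x)^{\lc+1}=o(\bar F(x)^{\lc})$. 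The transfer from the block law to the stationary environment is done by Lemma~\ref{lem:ergodic} with $f=Y_0$.

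If you prefer to keep a fixed $J$, you need a genuine decoupling. For instance:
\begin{itemize}
\item On $\{Z_j>x\}$, fix one edge of weight $>(x-2a)/j$ on each of the $\lc$ disjoint paths; there are at most $j^{\lc}$ choices.
\item These $\lc$ edges meet at most $\lc$ of the pairwise disjoint trial-pivot windows at spacing $\fs_\ell$.
\item The remaining trial sites therefore still force a factor $(1-p_{a,\ell})^{j/\fs_\ell-O(1)}$, independent of those weights.
\item Potter bounds then give a summable bound $\sum_{j>J}Cj^{A}(1-p_{a,\ell})^{j/\fs_\ell}\bar F(x)^{\lc}$. The range $j$ comparable to $x$ is absorbed by the geometric factor.
\end{itemize}
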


\begin{proof}
For $x>0$ and $i\ge1$, define
\begin{align*}
    N_x^{(i)}:= \sum_{u=\rho_i}^{\rho_{i+1}-1}\ind_{\{X_u>x\}}.
\end{align*}
If $x>a$, then the pivot conditions imply $X_u<a$ for the $(\ell-1)$ many nearest neighbor endpoint. Thus only $ \rho_i+\ell-1\le u\le \rho_{i+1}-\ell$ can contribute to $N_x^{(i)}$. 
Every such cut is contained in the renewal-block $[\rho_i,\rho_{i+1}]$, so Lemma~\ref{lem:B1-pivot-env} implies that $\{N_x^{(i)}\}_{i\ge1}$ is i.i.d.~Moreover, $N_x^{(i)}\le \rho_{i+1}-\rho_i$, and hence these variables are integrable.

Since
\begin{align*}
    \sum_{i=1}^{m}N_x^{(i)} = \sum_{u=\rho_1}^{\rho_{m+1}-1}\ind_{\{X_u>x\}},
\end{align*}
the strong law for the renewal blocks and the ergodic theorem for the stationary finite-range sequence $\{\ind_{\{X_u>x\}}\}_{u\in\dZ}$ give
\begin{align*}
    \frac1m\sum_{i=1}^{m}N_x^{(i)}
    \to
    \E N_x^{(1)},
    \qquad
    \frac{\rho_{m+1}-\rho_1}{m}
    \to
    \mu_{\sfP},
    \qquad
    \frac{1}{\rho_{m+1}-\rho_1}
    \sum_{u=\rho_1}^{\rho_{m+1}-1}\ind_{\{X_u>x\}}
    \to
    \bar F(x)^{\lc}
\end{align*}
almost surely, where $\mu_{\sfP}=\E(\rho_2-\rho_1).$ Therefore, for every $x>a$,
\begin{align}\label{eq:ENx}
    \E N_x^{(1)} = \mu_{\sfP}\bar F(x)^{\lc}.
\end{align}

For the remainder of the proof, translate the first block by $-\rho_1$ and simply write
\begin{align*}
    \rho:=\rho_2-\rho_1,
    \qquad
    \sfT:=T(0,\rho)=T(\rho_1,\rho_2),
    \qquad
    N_x:=N_x^{(1)}.
\end{align*}
The geometric-trial estimate in Lemma~\ref{lem:B1-pivot-env} gives an exponential tail for $\rho$. Since $\bar F(x)^{\lc}$ is regularly varying, there exists a constant $B_0>0$ such that
\begin{align}\label{eq:rho-log-trunc-stable}
    \pr(\rho>B_0\log x) + \E\left[\rho^2\ind_{\{\rho>B_0\log x\}}\right]
    = o\bigl(\bar F(x)^{\lc}\bigr).
\end{align}
For every $m\ge1$, the cuts $\cC_0$ and $\cC_m$ are distinct and each has cardinality $\lc$. Hence $|\cC_0\cup\cC_m| \ge \lc+1$.
Since
\begin{align*}
    \binom{N_x}{2}
    \ind_{\{\rho\le B_0\log x\}}
    \le \sum_{u=0}^{\rho-1}
    \sum_{m=1}^{\lceil B_0\log x\rceil}
    \ind_{\{X_u>x,\,X_{u+m}>x\}},
\end{align*}
Applying Lemma~\ref{lem:ergodic} with $f(\mvgo) = \sum_{m=1}^{\lceil B_0\log x\rceil} \ind_{\{X_0>x,\,X_m>x\}}$, taking the expectation yields
\begin{align*}
    \E\left[\binom{N_x}{2} \ind_{\{\rho\le B_0\log x\}}\right]
    \le \E\left[ \sum_{u=0}^{\rho-1} \sum_{m=1}^{\lceil B_0\log x\rceil} \ind_{\{X_u>x,\,X_{u+m}>x\}} \right] 
    = \mu_{\sfP}\sum_{m=1}^{\lceil B_0\log x\rceil}  \pr(X_0>x,\,X_m>x).
\end{align*}
By independence of the edge weights and $|\cC_0\cup\cC_m| \ge \lc+1$,
\begin{align*}
    \E\binom{N_x}{2}
    &\le \E\left[\rho^2\ind_{\{\rho>B_0\log x\}}\right] + \mu_{\sfP}\sum_{m=1}^{\lceil B_0\log x\rceil} \pr(X_0>x,\,X_m>x)\notag\\
    &\le o\bigl(\bar F(x)^{\lc}\bigr) + C(\log x)\bar F(x)^{\lc+1}
     = o\bigl(\bar F(x)^{\lc}\bigr).
\end{align*}

We first prove the lower bound. If $N_x\ge1$, every path from $0$ to $\rho$ crosses an $x$-heavy unit cut, and therefore $\sfT>x$. Hence
\begin{align}\label{eq:Ttail-lower}
    \pr(\sfT>x)
    \ge \pr(N_x\ge1)
    \ge \E N_x-\E\binom{N_x}{2}
    = \mu_{\sfP}\bar F(x)^{\lc} + o\bigl(\bar F(x)^{\lc}\bigr).
\end{align}

For the upper bound, fix $\eps\in(0,1/2)$ and define
\begin{align*}
    \eta_x := \frac{\eps x}{4(B_0\log x+2)},
    \qquad
    S_x := \{e\in\cE[0,\rho]:\go_e>\eta_x\}.
\end{align*}
We claim that, for all sufficiently large $x$,
\begin{align}\label{eq:Ttail-upper-inclusion}
    \{\sfT>x,\ \rho\le B_0\log x,\ N_{(1-\eps)x}=0\}
    \subseteq
    \{|S_x|\ge\lc+1,\ \rho\le B_0\log x\}.
\end{align}
Indeed, work on $\{\rho\le B_0\log x\}$. If $\rho\le2\ell-2$, then
Lemma~\ref{lem:path_structure} (i) gives $\sfT<2a$, so this case does not happen for large $x$.
Assume that $\rho>2\ell-2$.
If $S_x$ does not separate $\fL_0$ from $\fR_\rho$, there is a simple path between the two windows avoiding $S_x$. Attaching its endpoints to $0$ and $\rho$ costs at most $2a$, and hence
\begin{align*}
    \sfT \ind_{\{\rho\le B_0\log x\}} \le 2a+(\rho+1)\eta_x < x
\end{align*}
for all sufficiently large $x$, a contradiction. Thus, on $\{\sfT>x\}$, the set $S_x$ separates $\fL_0$ from $\fR_\rho$ and $|S_x|\ge\lc$ by Lemma~\ref{lem:cut-geometry}.

Suppose that $|S_x|=\lc$. The same lemma yields $S_x=\cC_u^{(\rho)}$ for some $u\in\{\ell-1,\ldots,\rho-\ell\}$. Since this is an interior cut,
$\cC_u^{(\rho)}=\cC_u$. On $\{N_{(1-\eps)x}=0\}$, choose
$e\in\cC_u$ with $\go_e\le(1-\eps)x$. By
Lemma~\ref{lem:cut-geometry}, there is a simple $\fL_0$-$\fR_\rho$ path whose unique edge
in $S_x$ is $e$. Every other edge on this path has weight at most $\eta_x$, so
\begin{align*}
    \sfT \ind_{\{\rho\le B_0\log x,\ N_{(1-\eps)x}=0\}}
    \le
    2a+(1-\eps)x+(\rho+1)\eta_x
    <
    x
\end{align*}
for all sufficiently large $x$, a contradiction. This proves
\eqref{eq:Ttail-upper-inclusion}.

To bound the probability on the right-hand side of~\eqref{eq:Ttail-upper-inclusion}, we define $Y_u$ such that $u$ is a pivot node and the renewal block starting at $u$ is ``bad'':
\[ Y_u := \ind_{\{u \text{ is a pivot node}\}} \cdot \ind_{\left\{ \rho(u) \le B_0\log x, \, \sum_{e \in \cE[u, u+\rho(u)]} \ind_{\{\go_e > \eta_x\}} \ge \lc+1 \right\}},
\]
where $\rho(u)$ denotes the distance from $u$ to the next pivot node. In particular, for $u=0$,
\[
Y_0 = \ind_{\{0 \text{ is a pivot node}\}} \cdot \ind_{\left\{ \rho \le B_0\log x, \, |S_x| \ge \lc+1 \right\}}.
\]
Observe that the only pivot node is $\rho_1$ in the interval $[\rho_1, \rho_2-1]$, so we have
$
    \sum_{u=\rho_1}^{\rho_2-1} Y_u = Y_{\rho_1}.
$
Since $\E[Y_{\rho_1}]$ is the probability that the first block is bad, applying Lemma~\ref{lem:ergodic} with $f(\mvgo) = Y_0$ gives
\begin{align}\label{eq:id-stable-lem}
    \pr(|S_x|\ge\lc+1,\ \rho\le B_0\log x) 
    = \E Y_{\rho_1} 
    = \E\left( \sum_{u=\rho_1}^{\rho_2-1} Y_u \right)
    = \mu_{\sfP}\, \E Y_0.
\end{align}

On the other hand, for $Y_0=1$ to occur, the  interval $[0, \lceil B_0\log x \rceil]$ to contain those $\lc+1$ heavy edges. Thus, we have 
\begin{align*}
    Y_0 \le \ind_{\left\{ \sum_{e\in\cE[0,\lceil B_0\log x\rceil]} \ind_{\{\go_e>\eta_x\}} \ge\lc+1 \right\} } .
\end{align*}

Taking the expectation of this inequality and substituting it back into \eqref{eq:id-stable-lem}, we obtain
\begin{align}\label{eq:Ttail-negligible}
    \pr(|S_x|\ge\lc+1,\ \rho\le B_0\log x) 
    = \mu_{\sfP} \E[Y_0] 
    &\le \mu_{\sfP}\, \pr\left( \sum_{e\in\cE[0,\lceil B_0\log x\rceil]} \ind_{\{\go_e>\eta_x\}} \ge\lc+1 \right) \notag\\
    &\le C(\log x)^{\lc+1}\bar F(\eta_x)^{\lc+1}.
\end{align}

Since $\bar F(\eta_x)\le C(\log x)^{\gc+1}\bar F(x)$ for all sufficiently large $x$,
the right-hand side of~\eqref{eq:Ttail-negligible} is
$o(\bar F(x)^{\lc})$.

Combining~\eqref{eq:rho-log-trunc-stable},~\eqref{eq:Ttail-upper-inclusion}, and~\eqref{eq:Ttail-negligible}, we obtain
\begin{align*}
    \pr(\sfT>x)
    \le \pr(N_{(1-\eps)x}\ge1) + o\bigl(\bar F(x)^{\lc}\bigr)
    &\le \E N_{(1-\eps)x} + o\bigl(\bar F(x)^{\lc}\bigr) \\
    &= \mu_{\sfP}\bar F((1-\eps)x)^{\lc} + o\bigl(\bar F(x)^{\lc}\bigr).
\end{align*}
Therefore
\begin{align*}
    \limsup_{x\to\infty} {\bar F(x)^{-\lc}}\cdot {\pr(\sfT>x)}
    \le \mu_{\sfP}(1-\eps)^{-\gc\lc}.
\end{align*}
Letting $\eps\downarrow0$ and using~\eqref{eq:Ttail-lower} proves~\eqref{eq:Ttail}.
\end{proof}

Lemma~\ref{lem:T1-tail} shows that the regeneration reward $\sfT_1$ is regularly varying with tail index $\gc\lc$ and tail constant $\mu_{\sfP}=\E\sfP_1$. Therefore, the stable limit is now a renewal-reward consequence: Apply the classical stable limit theorem to the i.i.d.~block rewards, and then replace the deterministic number of blocks by the random renewal count, with the negligible endpoint residuals.

\begin{thm}[Theorem~\ref{thm:micro}-(\ref{thm:sl})]\label{thm:micro-stable}
Suppose Assumption~\textup{A.\ref{ass:A2}} holds and $0<\ga:=\gc\lc<2$.
Let $m_n:=\left\lfloor n/\mu_{\sfP} \right\rfloor$ and define
\begin{align}\label{eq:stable-law-const}
    a_n :=\inf\left\{x>0:m_n\pr(\sfT_1>x)\le1\right\},
    \quad\text{and}\quad
    b_n :=m_n\,\E\left[\sfT_1\ind_{\{\sfT_1\le a_n\}}\right],
\end{align}
where $\sfT$ is the renewal-block passage time defined in~\eqref{eq:block-variable-notation}.
Then
\begin{align*}
    \frac1{a_n}\cdot (T_n-b_n)
    \Longrightarrow
    Z_\ga,
\end{align*}
where $Z_\ga$ is the totally right-skewed $\ga$-stable random variable characterized by
\begin{align}\label{eq:stable-law-char}
    \E\left[e^{itZ_\ga}\right]
    =
    \exp\left(
        \int_0^\infty
        \left(
            e^{itx} - 1 - itx\ind_{\{x\le1\}}
        \right)
        \ga x^{-\ga-1}\,dx
    \right),
    \quad
    t\in\dR.
\end{align}
\end{thm}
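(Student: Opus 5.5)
The proof has three steps: a stable limit for a deterministic number of i.i.d.\ blocks, a transfer to the random renewal count $\gk_n$, and control of the endpoint residual $\sfRT_n$.

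\emph{Step 1: deterministic number of blocks.} By Lemma~\ref{lem:T1-tail},
\[
\pr(\sfT_1>x)=\mu_{\sfP}\bar F(x)^{\lc}(1+o(1)),
\]
and $\bar F(x)^{\lc}=x^{-\gc\lc}L(x)^{\lc}$ is regularly varying with index $-\ga\in(-2,0)$. Since $\sfT_1\ge0$, the classical stable limit theorem for i.i.d.\ variables in the domain of attraction of a totally right-skewed $\ga$-stable law applies. By Lemma~\ref{lem:B1-pivot-env} the block rewards are i.i.d., and Condition A.\ref{ass:A2} implies Condition B.\ref{ass:B1}, so that lemma is available. With $a_n,b_n$ as in~\eqref{eq:stable-law-const}, we have $m_n\pr(\sfT_1>a_nx)\to x^{-\ga}$, and hence
\[
\frac{S^{\sfT}_{m_n}-b_n}{a_n}\Longrightarrow Z_\ga,
\]
with Lévy measure $\ga x^{-\ga-1}dx$ and truncation at $1$ exactly as in~\eqref{eq:stable-law-char}. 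The general theorem gives the truncation centering $m_n\E[\sfT_1\ind\{\sfT_1\le a_n\}]$, and this matches $b_n$. We also record that $a_n=m_n^{1/\ga}\tilde L(m_n)$, so $a_n=n^{1/\ga+o(1)}$.

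\emph{Step 2: replace $m_n$ by $\gk_n-1$.} By~\eqref{eq:T-decomp}, $T_n=S^{\sfT}_{\gk_n-1}+\sfRT_n$. Lemma~\ref{lem:end-residual} shows $\sfRT_n$ is tight, so $\sfRT_n/a_n\to0$ in probability. It remains to show
\[
\frac{S^{\sfT}_{\gk_n-1}-S^{\sfT}_{m_n}}{a_n}\stackrel{\pr}{\to}0.
\]
Since the $\sfP_i$ have finite variance, the renewal CLT (or the LIL) gives $|\gk_n-1-m_n|=O_{\pr}(\sqrt n)$. Fix $K$ and set $r_n:=K\sqrt n$. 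On the high-probability event $\{|\gk_n-1-m_n|\le r_n\}$, the difference is bounded by
\[
\max_{|j|\le r_n}\bigl|S^{\sfT}_{m_n+j}-S^{\sfT}_{m_n}\bigr|.
\]
By maximal inequalities for i.i.d.\ sums in a stable domain, this maximum has the same order as $a_{r_n}+r_n\E[\sfT_1\ind\{\sfT_1\le a_n\}]$, and we must show this is $o(a_n)$. The stochastic part is fine: $a_{r_n}/a_n\asymp(r_n/n)^{1/\ga}\to0$. The centering part splits by $\ga$.
\begin{itemize}
\item If $\ga<1$, then $\E[\sfT_1\ind\{\sfT_1\le a_n\}]\sim C a_n^{1-\ga}\asymp a_n/n$ up to slowly varying factors. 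The centering drift over $r_n$ steps is then $O(a_n r_n/n)=o(a_n)$.
\item If $1<\ga<2$, the truncated mean tends to $\mu_{\sfT}<\infty$, so the drift is $r_n\mu_{\sfT}=K\sqrt n\,\mu_{\sfT}$. This is $o(a_n)$ because $a_n=n^{1/\ga+o(1)}$ and $1/\ga>1/2$.
\item The boundary case $\ga=1$ is handled the same way, using slowly varying truncated means.
\end{itemize}

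To justify the maximal bound rigorously I would prefer a functional argument. The process $t\mapsto(S^{\sfT}_{\lfloor m_nt\rfloor}-tb_n)/a_n$ converges in $D[0,2]$ with the $J_1$ topology to an $\ga$-stable Lévy process, by Skorokhod's theorem. Since $(\gk_n-1)/m_n\to1$ almost surely, composition with this random time change, together with continuity of the Lévy process at the deterministic time $1$, yields
\[
\frac{S^{\sfT}_{\gk_n-1}-\frac{\gk_n-1}{m_n}b_n}{a_n}\Longrightarrow Z_\ga.
\]
The centering error $\bigl|\tfrac{\gk_n-1}{m_n}-1\bigr|\,b_n/a_n$ is then $O_{\pr}(n^{-1/2})\,b_n/a_n$, and this is $o_{\pr}(1)$ by the same three-case computation: $b_n/a_n$ is $O(1)$ up to slowly varying factors when $\ga<1$, and $b_n/a_n=O(n^{1-1/\ga+o(1)})$ with $1-1/\ga<1/2$ when $\ga\ge1$.

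\emph{Main obstacle.} The delicate point is this random-index and centering control, specifically the $\ga\ge1$ regime, including the slowly varying corrections at $\ga=1$ and ensuring $b_n$, defined via the deterministic $m_n$, remains the right centering. Steps 1 and the residual handling are routine once Lemma~\ref{lem:T1-tail} and Lemma~\ref{lem:end-residual} are in hand.
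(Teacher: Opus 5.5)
Your proof is correct. Its skeleton is the same as the paper's: a stable limit for $m_n$ i.i.d.\ blocks, then replacing $m_n$ by $\gk_n-1$, then showing $\sfRT_n$ is negligible. The genuine difference is how you justify the random-index replacement.

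\textbf{The paper's route.} It argues elementarily, using $\sfT_i\ge0$. It takes $r_n=\lceil n^\beta\rceil$ with $\tfrac12<\beta<1\wedge\tfrac1\ga$. On $\{|\gk_n-1-m_n|\le r_n\}$, positivity bounds the difference by a sum of $2r_n$ i.i.d.\ copies of $\sfT_1$. The argument then finishes with a union bound on $\{\sfT_i>\gd a_n\}$, plus Markov's inequality for $\E[\sfT_1\ind_{\{\sfT_1\le\gd a_n\}}]$ split into the three Karamata cases. This is exactly the rigorous form of your first ``maximal inequality'' sketch. Because of positivity no maximal inequality is needed, whereas your claim that the maximum is of order $a_{r_n}+r_n\E[\cdots]$ is only heuristic as written.

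\textbf{Your functional route.} You import Skorokhod's $J_1$ invariance principle for the stable domain. You then evaluate at the random time $(\gk_n-1)/m_n\to1$, which is a.s.\ a continuity point of the L\'evy limit, so the evaluation map is continuous there. This moves the case analysis to the centering mismatch $|(\gk_n-1)/m_n-1|\,b_n/a_n$, which you control correctly via the renewal CLT in all three regimes of $\ga$.

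\textbf{What each buys.} The paper needs only the one-dimensional stable limit theorem and positivity. Yours uses a heavier theorem but is softer: it needs neither $\sfT_i\ge0$ nor a tuned window $r_n$. It is essentially the argument behind the skeleton functional limit in Theorem~\ref{thm:fclt-stable}.

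\textbf{One detail to state.} The standard functional theorem centers by $\lfloor m_nt\rfloor\E[\sfT_1\ind_{\{\sfT_1\le a_n\}}]$ rather than $tb_n$. The two differ by at most $\E[\sfT_1\ind_{\{\sfT_1\le a_n\}}]/a_n$, which tends to $0$ uniformly in $t$, so this is harmless.

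\textbf{Endpoint term.} Citing tightness of $\sfRT_n$ from Lemma~\ref{lem:end-residual}(i) is valid, since Condition~A.\ref{ass:A2} implies Condition~B.\ref{ass:B1}. It is equivalent to the paper's nearest-neighbour-path bound on the tight-gap event.
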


\begin{proof}
By Lemma~\ref{lem:B1-pivot-env},
\begin{align*}
    \{(\sfP_i,\sfT_i)\}_{i\ge1}
    =
    \{(\rho_{i+1}-\rho_i,\ T(\rho_i,\rho_{i+1}))\}_{i\ge1}
\end{align*}
is i.i.d., and $\sfP_i$ has finite moments of all orders.  Lemma~\ref{lem:T1-tail} shows that $\sfT_i$ is regularly varying with tail index $\ga=\gc\lc\in(0,2)$.  Therefore, the standard stable limit theorem gives
\begin{align}\label{eq:deterministic-stable-sum}
    \frac1{a_n}\left(\sum_{i=1}^{m_n}\sfT_i-b_n\right)
    \Longrightarrow
    Z_\ga.
\end{align}

We next replace $m_n$ by the renewal index $M_n:=\gk_n-1$,
    where $\gk_n:=\#\{i:\rho_i\le n\}.$
Since $\E\sfP_1^2<\infty$ and $\E\sfP_1=\mu_{\sfP}$, the fluctuation for renewal counting estimate gives
\begin{align}\label{eq:renewal-index-tight}
    M_n-m_n=O_{\pr}(\sqrt n).
\end{align}
Choose $\beta>0$ such that
\begin{align*}
    \frac12<\beta<1\wedge\frac1\ga,
    \quad\text{and set}\quad
    r_n:=\lceil n^\beta\rceil.
\end{align*}
Then~\eqref{eq:renewal-index-tight} implies
$\pr(|M_n-m_n|>r_n)\to 0$. On $\{|M_n-m_n|\le r_n\}$,
\begin{align*}
    \left|
    \sum_{i=1}^{M_n}\sfT_i - \sum_{i=1}^{m_n}\sfT_i
    \right|
    \le
    \sum_{i=m_n-r_n+1}^{m_n+r_n}\sfT_i.
\end{align*}
Since $m_n\pr(\sfT_1>a_n)\to1$, for every fixed $\gd>0$,
\begin{align*}
    r_n\pr(\sfT_1>\gd a_n)
    \to
    0.
\end{align*}
Moreover,
\begin{align}\label{eq:stable-trunc-vanishing}
    \frac{r_n}{a_n}
    \E\left[\sfT_1\ind_{\{\sfT_1\le\gd a_n\}}\right]
    \to
    0.
\end{align}
Indeed, for $0<\ga<1$, Karamata's theorem gives
\begin{align*}
    \E\left[\sfT_1\ind_{\{\sfT_1\le x\}}\right]
    \asymp
    \frac{\ga}{1-\ga}\,x\pr(\sfT_1>x),
\end{align*}
so~\eqref{eq:stable-trunc-vanishing} follows from the tail estimate. If
$1<\ga<2$, then $\E\sfT_1<\infty$ and $r_n/a_n\to0$. If $\ga=1$, the truncated mean is slowly varying, while $r_n/a_n$ is regularly varying with negative index, so~\eqref{eq:stable-trunc-vanishing} holds.
Thus, for every $\eps>0$,
\begin{align*}
    \pr\left(\sum_{i=1}^{2r_n}\sfT_i>\eps a_n\right)
    &\le
    2r_n\pr(\sfT_1>\gd a_n)
    +
    \frac{2r_n}{\eps a_n}
    \E\left[\sfT_1\ind_{\{\sfT_1\le\gd a_n\}}\right]
    \to
    0.
\end{align*}
Consequently,
\begin{align}\label{eq:random-deterministic-repl}
    \frac1{a_n}
    \left|
    \sum_{i=1}^{M_n}\sfT_i
    -
    \sum_{i=1}^{m_n}\sfT_i
    \right|
    \stackrel{\pr}{\to}
    0.
\end{align}
Combining~\eqref{eq:deterministic-stable-sum} and
\eqref{eq:random-deterministic-repl},
\begin{align}\label{eq:bulk-random-stable}
    \frac1{a_n}\left(\sum_{i=1}^{M_n}\sfT_i-b_n\right)
    \Longrightarrow
    Z_\ga.
\end{align}

It remains to control the endpoint terms.  The pivot decomposition~\eqref{eq:T-decomp} gives $T_n = T(0,\rho_1)+ \sum_{i=1}^{M_n}\sfT_i + T(\rho_{\gk_n},n)$.
Lemma~\ref{lem:end-residual} implies that the initial gap $\rho_1$ and the terminal residual gap $n-\rho_{\gk_n}$ form tight families. Fix $A\in\dN$. On
$
    \{\rho_1\le A,\ n-\rho_{\gk_n}\le A\},
$
by considering nearest-neighbor paths, we have
\begin{align*}
	T(0,\rho_1)+T(\rho_{\gk_n},n) 
	\le \sum_{i=0}^{A-1}\go_{i,i+1} + \sum_{j=n-A}^{n-1} \go_{j,j+1} 
	\preceq \sum_{i=1}^{2A} \go_i
\end{align*}
which is finite almost surely. Since $a_n\to\infty$,
for every $\eps>0$,
\begin{align*}
    \lim_{n\to\infty}
    \pr\left(
      T(0,\rho_1)+T(\rho_{\gk_n},n)>\eps a_n,\
      \rho_1\le A,\ n-\rho_{\gk_n}\le A
    \right)
    =0.
\end{align*}
Letting $A\to\infty$ proves
$(T(0,\rho_1)+T(\rho_{\gk_n},n))/a_n
    \stackrel{\pr}{\to}
    0.
$
Slutsky's theorem and~\eqref{eq:bulk-random-stable} complete the proof.
\end{proof}

%%%%%%%%%%%%%%%%%%%%%%%%%%%%%%%%%%%%%%%%%%%%%%%%%%%%%%%%%%%%%%%%%%%%%%%%%%%%%%%
\section{Proofs for Auxiliary Lemmas}
\label{sec:aux}

In this section, we provide the deferred proofs for the structural lemmas underlying the pivot decompositions introduced in Section~\ref{sec:pivot}. These proofs verify the geometric localization of geodesics and the independence of the environment blocks separated by pivot nodes, which are the cornerstones of our renewal-reward limit theorems for both Condition~B.\ref{ass:B1} and Condition~B.\ref{ass:B2}.

\begin{proof}[Proof of Lemma~\ref{lem:B1-pivot-env}]
\begin{enumeratei}
	\item Let $\pi^\star$ be a geodesic from $\rho_i$ to $\rho_j$. By Lemma~\ref{lem:pivot}, any path starting before a pivot node or ending after one must pass through it. Since all edge weights are non-negative, any excursion outside the interval $[\rho_i, \rho_j]$ would create a cycle, which would increase the total weight. Hence $\pi^\star$ cannot visit a vertex smaller than $\rho_i$. 
 		Therefore $\pi^\star$ is contained in $\cE[\rho_i,\rho_j]$, and the measurability of $T(\rho_i,\rho_j)$ follows. Since $\pi^\star$ can be chosen to have the minimum number of edges among all geodesics, the same argument gives the stated localization for the hop-count geodesic.

\medskip
	\item For a vertex $v$, write $P_v$ for the event that $v$ is a pivot node, and decompose
\begin{align*}
P_v^-&:=
\{\go_{x,v}<a:\ v-\ell+1\le x\le v-1\},\\
P_v^+&:=
\{\go_{v,y}<a:\ v+1\le y\le v+\ell-1\},\\
P_v^\times&:=
\{\go_{x,y}>2a:\ x<v<y,\ y-x\le\ell\}.
\end{align*}
Then
\begin{align*}
    P_v=P_v^-\cap P_v^+\cap P_v^\times,
\end{align*}
and the three events depend on disjoint edge sets. Hence, conditioning on $P_v$ does not create dependence between the environment strictly to the left of $v$ and the environment strictly to the right of $v$.

By the definition of pivot, on $P_v$, none of
\begin{align*}
    v+1,\ldots,v+\ell-2
\end{align*}
can be a pivot. Indeed, if $w=v+k$ with $1\le k\le\ell-2$, then $P_v$ gives $\go_{v-1,w}>2a$, while $P_w$ would require $\go_{v-1,w}<a$. 

Thus, after finding a pivot at $v$, the search for the next block relies entirely on the unexplored right side, using $P_v^+$ as the initial condition.

Now, consider two consecutive pivots, $u$ and $v$. As shown earlier, once we finish exploring the block from $u$ to $v$, the remaining environment to the right of $v$ becomes completely independent of this past block. Operating only under the starting condition $P_v^+$, this new environment behaves the same way as the environment did right after $u$.
 Translation invariance shows that the blocks in~\eqref{eq:pivot-iid-str} are independent and identically distributed. The i.i.d.~statements involving $T(\rho_i,\rho_{i+1})$ and $H(\rho_i,\rho_{i+1})$ follow from the localization proved in (i).

\medskip
	\item It remains to prove the gap domination. Consider only the sites $v + k \fs_\ell$ for $k \ge 1$.
	The pivot windows of these trial sites are pairwise disjoint, so the corresponding events 
	\begin{align*}
		\{v + k \fs_\ell \text{ is a pivot} \}_{k\ge1}
	\end{align*}
	 are i.i.d.~Bernoulli random variables with parameter $p_{a,\ell}$. The next actual pivot node occurs no later than the first successful trial on $v + k \fs_\ell$, which geometrically dominates the gap length as stated in~\eqref{eq:pivot-gap-domin}. Hence $\E(\rho_2-\rho_1)<\infty$, and~\eqref{eq:pivot-renewal-slln} directly follows from the renewal strong law of large numbers.
\end{enumeratei}
\end{proof}

Next, we look at generalized pivots under Condition~B.\ref{ass:B2} and show they share similar properties. Before proving the geodesic forcing property (Lemma~\ref{lem:B2-kpivot-forcing}), we need to see what happens if a path tries to detour the pivot chain. The following geometric lemma is the key counting fact and is an easy consequence of the $\ell$ restriction on the distance between the edge endpoints.
\begin{lem}[Crossing count]\label{lem:B2-kpivot-count}
Let
\begin{align*}
    L_j:=x+j\ell,
    \quad
    j=0,1,\ldots,m-1.
\end{align*}
Assume that every edge crossing over one of the levels $L_0,\ldots,L_{m-1}$ is heavy. Then any path that starts to the left of $L_0$, ends to the right of $L_{m-1}$, and avoids all vertices
\begin{align*}
    L_0,\ldots,L_{m-1},
\end{align*}
must contain at least $m$ heavy edges.
\end{lem}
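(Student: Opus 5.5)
The approach is to count, for each level, the edges of the path that jump over it, and to check that a single edge can jump over at most one level. Let $\pi=(v_0,v_1,\ldots,v_r)$ be a path with $v_0<L_0$, $v_r>L_{m-1}$, and $v_t\notin\{L_0,\ldots,L_{m-1}\}$ for every $t$.

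\textbf{Step 1: each level is crossed by some edge.} Fix $j\in\{0,\ldots,m-1\}$. The path starts strictly left of $L_j$, ends strictly right of $L_j$, and never visits $L_j$. Let $t$ be the first index with $v_t>L_j$. Then $t\ge1$ and $v_{t-1}<L_j<v_t$, so the edge $e_j:=(v_{t-1},v_t)$ crosses over $L_j$. By hypothesis, $e_j$ is heavy.

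\textbf{Step 2: the edges $e_0,\ldots,e_{m-1}$ are distinct.} I will show that no edge of the $\ell$-spread-out line crosses over two distinct levels. Suppose an edge $(p,q)$ with $p<q$ and $q-p\le\ell$ crosses over both $L_i$ and $L_j$, where $i<j$. Then $p<L_i$ and $L_j<q$, so
\[
q-p\ \ge\ (L_j+1)-(L_i-1)\ =\ (j-i)\ell+2\ \ge\ \ell+2,
\]
which contradicts $q-p\le\ell$. Hence the map $j\mapsto e_j$ is injective.

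\textbf{Step 3: conclude.} Distinct levels give distinct heavy edges $e_j$, so $\pi$ contains at least $m$ heavy edges. If $\pi$ traverses the same edge more than once, this causes no problem, because we only count distinct edges.

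The main point is Step~2. Its proof uses only that consecutive levels are exactly $\ell$ apart and that crossing means strict inequality on both sides. I expect no real obstacle here. The one thing to check is that ``crossing over $L_j$'' in Definition~\ref{defn:B2-kpivot} means $p<L_j<q$, which matches the usage in the proof of Lemma~\ref{lem:pivot}.
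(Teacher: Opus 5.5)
Your proof is correct and follows the same route as the paper: each level must be jumped by some edge of the path, that edge is heavy, and no edge of length at most $\ell$ can jump two levels spaced $\ell$ apart, so the $m$ crossing edges are distinct. Your Step~1 (taking the first index past $L_j$) and your direct handling of arbitrary pairs $i<j$, where the paper checks only consecutive levels, spell out details the paper leaves implicit.
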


\begin{proof}
The path must cross each level $L_j$. Since every edge crossing one of these levels is heavy by assumption, it suffices to show that a single edge cannot cross two distinct levels.

Suppose an edge crosses both $L_j$ and $L_{j+1}$. Then its endpoints must lie on opposite sides of both levels, and hence its length is strictly greater than
$
    L_{j+1}-L_j=\ell.
$
This contradicts the assumption that all edges have endpoint distance at most $\ell$. Therefore, each edge crosses at most one level. Since the path crosses all $m$ levels, it must contain at least $m$ distinct heavy crossing edges.
\end{proof}

\begin{proof}[Proof of Lemma~\ref{lem:B2-kpivot-forcing}]
Suppose, for contradiction, that there exists a geodesic $\pi^\star$ from $u$ to $w$ that does not pass through $x$. Let $y$ be the last vertex of $\pi^\star$ strictly to the left of $x$. Then we have $x-\ell<y<x$.

\medskip
\noindent\textbf{Case 1.}
Suppose that $\pi^\star$ first meets the pivot chain at
$
    x+i\ell
$
for some $1\le i<k$. Thus $\pi^\star$ avoids
\begin{align*}
    x,x+\ell,\ldots,x+(i-1)\ell
\end{align*}
but passes through $x+i\ell$. By Lemma~\ref{lem:B2-kpivot-count}, the segment of $\pi^\star$ from $y$ to $x+i\ell$ contains at least $i$ heavy edges. The final edge entering $x+i\ell$ is light, since it is incident to a pivot-chain vertex. Hence, this segment has a weight of at least
$
    i(1+2/k)+1.
$
Replace this segment with the corridor
\begin{align*}
    y\to x\to x+\ell\to\cdots\to x+i\ell .
\end{align*}
This corridor contains exactly $(i+1)$ light edges and therefore has a weight at most
$
    (i+1)(1+1/(k+1)).
$
The decrease in total weight is at least
\begin{align*}
    i(1+2/k)+1-(i+1)(1+1/(k+1))
    = i(2/k-1/(k+1))-1/(k+1)
    \ge 2/k-2/(k+1) > 0.
\end{align*}
Thus, the replacement strictly lowers the passage time, contradicting the geodesicity of $\pi^\star$.

\medskip
\noindent\textbf{Case 2.}
Suppose that $\pi^\star$ avoids every pivot-chain vertex in the set 
$
    \{x,x+\ell,\ldots,x+(k-1)\ell\}.
$
Let $z$ be the first vertex of $\pi^\star$ to the right of $x+(k-1)\ell$. Applying
Lemma~\ref{lem:B2-kpivot-count} with $m=k$, the segment of $\pi^\star$ from $y$ to $z$ contains
at least $k$ heavy edges. Hence, its weight is strictly bigger than
$
    k(1+2/k)=k+2.
$
Replace this segment by
\begin{align*}
    y\to x\to x+\ell\to\cdots\to x+(k-1)\ell\to z.
\end{align*}
This corridor contains exactly $(k+1)$ many light edges and therefore has a weight strictly smaller than
$
    (k+1)(1+1/(k+1))=k+2.
$
The decrease in weight is strictly positive.
Again, we obtain a strictly shorter path, contradicting the geodesicity.

Both cases lead to contradictions. Thus, every geodesic from $u$ to $w$ must pass through $x$.
\end{proof}

\begin{proof}[Proof of Lemma~\ref{lem:B2-pivot-env}]
Under Condition~\textup{B.\ref{ass:B2}}, define $M_k:=(k-1)\ell$.
\begin{enumeratei}
\item Let $r:=\rho_i$ and $s:=\rho_j$, and choose a geodesic $\pi^\star=(v_0,\ldots,v_m)$ from $r$ to $s$ with the minimum number of edges among all such geodesics. We will show that $\pi^\star$ is strictly contained in $[r,s]$.

Suppose first that $\pi^\star$ visits a vertex strictly smaller than $r$. Let $p$ be the first index with $v_p<r$, and let $q>p$ be the first later index with $v_q\ge r$. Since all edge weights are strictly positive, a geodesic cannot contain loops; therefore, the path cannot return to $r$ once it has left it, which implies $v_q>r$.

Thus, the edge $(v_{q-1},v_q)$ crosses over the generalized pivot vertex $r$, and must therefore be heavy:
\begin{align*}
    \go_{v_{q-1},v_q}>1+2/k.
\end{align*}
Moreover, since $v_{q-1} < r < v_q$, we have $v_q-r<v_q-v_{q-1}\le \ell$, so the edge $(r,v_q)$ is valid. Since this edge is connected to the pivot chain $S_r$, it is light:
\begin{align*}
    \go_{r,v_q}<1+1/(k+1).
\end{align*}
The subpath from $v_0=r$ to $v_{q-1}$ contains at least one edge, meaning its accumulated weight is at least $1$. If we replace the entire segment of the path from $r$ to $v_q$ with the single edge $(r,v_q)$, the total passage time strictly decreases by more than
\begin{align*}
    (1+1+2/k) - (1+1/(k+1))
    = 1+2/k- 1/(k+1) >0,
\end{align*}
which contradicts the geodesicity of $\pi^\star$. Hence, $\pi^\star$ never visits a vertex smaller than $r$.

By a symmetric argument, $\pi^\star$ never visits a vertex larger than $s$.
Consequently, $\pi^\star$ is contained in $[r,s]=[\rho_i,\rho_j]$ and uses only edges in $\cE[\rho_i,\rho_j]$. Hence $T(\rho_i,\rho_j)$ is exactly determined by $\Env[\rho_i,\rho_j]$ and is measurable with respect to it. Since $\pi^\star$ was chosen to minimize the hop count among all geodesics, this localization also ensures the measurability of $H(\rho_i,\rho_j)$.

\medskip
\item For $x\in\dZ$, let $P_x$ be the event that $x$ is a generalized pivot. This event is determined by the finite edge window
\begin{align*}
    I_x:=[x-\ell+1,\;x+k\ell-1]\cap\dZ .
\end{align*}
Indeed, every edge connected to
$
    S_x=\{x,x+\ell,\ldots,x+M_k\}
$
has both endpoints in $I_x$, and the same is true for every edge strictly crossing one of the vertices in $S_x$. Furthermore, the light and heavy conditions never conflict on the same edge: an edge incident to a chain vertex cannot strictly cross that same vertex, and the fixed spacing $\ell$ between consecutive chain vertices prevents it from crossing a different chain vertex.

Thus, the exploration used in Lemma~\ref{lem:B1-pivot-env}~(\ref{piv-ii}) applies with the window $I_x$ in place of the ordinary pivot window.  Once a generalized pivot has been found, the unexplored environment to the right of its window is independent of the past and, after translation, has the original law.  Therefore
$
    \left\{
        \left(
            \sfP_i,\,
            \Env[\rho_i,\rho_{i+1}]
        \right)
    \right\}_{i\ge1}
$
is i.i.d.~The i.i.d.~statement for
$
    \{(\sfP_i,\sfT_i,\sfH_i)\}_{i\ge1}
$
then follows from the localization proved in~(\ref{gen-piv-i}).

\medskip
\item Consider the trial sites $m\cdot\fs_{\ell,k}$, $m\ge1$, where $\fs_{\ell,k}=(k+1)\ell-1$.
 The trial events form an i.i.d.~Bernoulli sequence with success probability
$p_{\mathrm{gen},\ell}>0$.  The next generalized pivot occurs no later
than the first successful trial site, and therefore
\[
    \sfP_i
    \preceq
    \fs_{\ell,k}\,\mathrm{Geom}(p_{\mathrm{gen},\ell}),
    \quad i\ge1.
\]
This proves~\eqref{eq:B2-gap-geom-kpivot}. In particular, $\E\sfP_1<\infty$, and the renewal strong law gives~\eqref{eq:B2-pivot-renewal-slln}.

\medskip
\item Since every edge has spatial length at most $\ell$,
any path from $\rho_i$ to $\rho_{i+1}$ uses at least
$
    \lceil\sfP_i/\ell\rceil
$
edges. Each edge has a weight of at least $1$, so
\[
    \lceil\sfP_i/\ell\rceil
    \le
    \sfH_i
    \le
    \sfT_i .
\]
Conversely, the path that uses jumps of length $\ell$ and one final shorter jump has at most
$
    \lceil\sfP_i/\ell\rceil
$
edges, each of weight at most $2$.  Hence
\[
    \sfT_i
    \le
    2\lceil\sfP_i/\ell\rceil .
\]
Finally, by (\ref{gen-piv-i}) a minimum-hop geodesic from $\rho_i$ to $\rho_{i+1}$ is contained in $[\rho_i,\rho_{i+1}]$.  Since all weights are positive, it may be chosen simple, and therefore it uses at most $\sfP_i$ edges.
This proves~\eqref{eq:B2-block-bounds-kpivot}. 
Combining these bounds with the geometric domination in (\ref{gen-piv-iii}) gives finite moments of all orders for both $\sfT_i$ and $\sfH_i$.
\end{enumeratei}
This completes the proof.
\end{proof}

%%%%%%%%%%%%%%%%%%%%%%%%%%%%%%%%%%%%%%%%%%%%%%%%%%%%%%%%%%%%%%%%%%%%%%%%%%%%%%%%%%%
\section{Discussions and Open Problems}\label{sec:discussion}

The pivot-regeneration structure developed in this paper extends beyond the scalar limit theorems for the passage time and hop count. By forcing the geodesics through the pivot nodes, the sequence of passage times and hop counts reduces to an exact renewal-reward process. In this section, we discuss how this structural decomposition directly yields process-level functional limits, and we conclude with several open problems.

\subsection{Process-level limits}
The functional limits for the regenerative processes depend essentially on the integrability threshold of the block passage time $\sfT_1$, determining a sharp distributional phase transition. Since our pivot construction establishes a true i.i.d.~block structure, these process-level limits follow as standard consequences of Donsker's invariance principle for regenerative processes and the functional stable limit theorem for regularly varying sums; see, for example,~\cite{a15, whitt02, resnick07}. 

To state these limits rigorously, recall the random renewal count $\sfN(t):= \max\{m\ge 0: S_m^{\sfP} \le t\}$. The bulk passage-time, hop-count, and jump-count processes are given by the partial sums $S_{\sfN(nt)}^{\sfT}$, $S_{\sfN(nt)}^{\sfH}$, and $\mvS_{\sfN(nt)}^{\sfJ}$, respectively, where $\mvS_m^{\sfJ} := \sum_{i=1}^m \sfJ_i$ (see Remark~\ref{rem:geo-examples}).

\begin{thm}[Brownian functional limit in the finite-variance regime, see~{\cite[Theorem 7.4.1.]{whitt02}}]\label{thm:fclt-brownian}
Assume that $\ell\ge 2$ is fixed, $\go$ is not deterministic, and $\E\sfP_1^2 + \E\sfT_1^2 + \E\sfH_1^2 < \infty$. By Proposition~\ref{prop:k-moment}, this integrability condition holds if either Assumption~\textnormal{A.\ref{ass:A1}} holds (\ie\ $\E\go^2 < \infty$), or Assumption~\textnormal{A.\ref{ass:A2}} holds with $\gc > 2/\lc$. Then, in the Skorokhod space $D([0,\infty),\dR^{2+\ell})$ equipped with the $J_1$ topology, the centered joint skeleton process converges to a $(2+\ell)$-dimensional Brownian motion:
\[
    \left(
    \frac1{\sqrt n}
    \begin{pmatrix}
        S_{\sfN(nt)}^{\sfT}-nt\cdot \mu_{\sfT}/\mu_{\sfP}\\[2mm]
        S_{\sfN(nt)}^{\sfH}-nt\cdot \mu_{\sfH}/\mu_{\sfP}\\[2mm]
        \mvS_{\sfN(nt)}^{\sfJ}-nt\cdot \mvmu_{\sfJ}/\mu_{\sfP}
    \end{pmatrix}
    \right)_{t\ge 0}
    \Longrightarrow
    (\vB_t)_{t\ge0},
\]
where $\vB_\cdot$ is a centered $(2+\ell)$-dimensional Brownian motion with covariance function $\cov(\vB_s,\vB_t) = \min\{s,t\}\cdot \Sigma$, and $\Sigma$ is the $(2+\ell)$-dimensional covariance matrix from Corollary~\ref{cor:geo-stat}.
\end{thm}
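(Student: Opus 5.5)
The plan is to reduce the skeleton process to a functional CLT for i.i.d.\ vectors, then compose with the renewal time change. Set $\mvS_m:=\sum_{i=1}^m \xi_i$, where
\[
\xi_i:=(\sfT_i,\sfH_i,\sfJ_i)-\frac{\sfP_i}{\mu_{\sfP}}(\mu_{\sfT},\mu_{\sfH},\mvmu_{\sfJ}).
\]
By Lemma~\ref{lem:B1-pivot-env}\,(ii) (or Lemma~\ref{lem:B2-pivot-env}\,(ii)) together with the localization of the selected minimum-hop geodesic, the vectors $\xi_i$ are i.i.d.\ and centered. They are square-integrable, because $\|\sfJ_i\|\le \sfH_i\le\sfP_i$, $\sfP_1$ has exponential tails, and $\E\sfT_1^2<\infty$ by hypothesis (guaranteed by Proposition~\ref{prop:k-moment} with $k=2$ under A.\ref{ass:A2} with $\gc>2/\lc$). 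The multivariate Donsker theorem then gives
\[
\bigl(n^{-1/2}\mvS_{\lfloor nt\rfloor}\bigr)_{t\ge0}\Rightarrow \bigl(\cW_t\bigr)_{t\ge0}
\]
in $D([0,\infty),\dR^{2+\ell})$ with the $J_1$ topology, where $\cW$ is a Brownian motion with covariance $\cov(\xi_1)$. We use joint convergence with the scaled gap walk $n^{-1}S^{\sfP}_{\lfloor nt\rfloor}$, which obeys a functional SLLN.

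Next we handle the random time change. The renewal functional LLN, from the a.s.\ renewal law in Lemma~\ref{lem:B1-pivot-env}\,(iii) upgraded to uniform convergence on compacts by monotonicity (Dini/Pólya), gives
\[
\sup_{t\le K}\bigl|\sfN(nt)/n-t/\mu_{\sfP}\bigr|\to0 \quad\text{a.s.}
\]
Since the limit $t\mapsto t/\mu_{\sfP}$ is continuous and nondecreasing, the composition map is continuous at $(\cW,\,t/\mu_{\sfP})$, because $\cW$ has continuous paths. This yields
\[
n^{-1/2}\mvS_{\sfN(nt)}\Rightarrow \cW_{t/\mu_{\sfP}}.
\]
The process $\cW_{t/\mu_{\sfP}}$ is a Brownian motion with covariance $\min\{s,t\}\,\mu_{\sfP}^{-1}\cov(\xi_1)=\min\{s,t\}\,\Sigma$, matching the matrix $\Sigma_{d+2}$ of Corollary~\ref{cor:geo-stat} with $d=\ell$. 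This is exactly the route of \cite[Theorem 7.4.1]{whitt02}, so this step may simply be cited.

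The remaining task is to pass from $\mvS_{\sfN(nt)}$ to the stated centering. We have
\[
S^{\sfT}_{\sfN(nt)}-nt\frac{\mu_{\sfT}}{\mu_{\sfP}}
=\mvS^{(1)}_{\sfN(nt)}+\frac{\mu_{\sfT}}{\mu_{\sfP}}\bigl(S^{\sfP}_{\sfN(nt)}-nt\bigr),
\]
and similarly for the other coordinates. We therefore must show that $n^{-1/2}\sup_{t\le K}|nt-S^{\sfP}_{\sfN(nt)}|\to0$ in probability. This difference is at most the straddling gap $\sfP_{\sfN(nt)+1}$, so it is bounded by $\max_{i\le \sfN(nK)+1}\sfP_i$. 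That maximum is $O_{\pr}(\log n)$ because of the geometric domination of the gaps. Slutsky's lemma in the $J_1$ topology then finishes the argument.

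The main obstacle is justifying the continuity of the composition map and the uniform-on-compacts renewal LLN in $J_1$. Both are standard (Billingsley/Whitt random time change theorem) since the time-change limit is continuous and strictly increasing. The only model-specific checks are integrability of $\xi_1$ and independence of blocks, both already established earlier.
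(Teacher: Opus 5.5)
Your proposal is correct and takes the same route the paper uses. The paper only cites \cite[Theorem 7.4.1]{whitt02} and, in Corollary~\ref{cor:geo-stat}, applies Donsker's theorem to the centered i.i.d.\ sequence $\Phi_i-(\mvmu_\Phi/\mu_{\sfP})\sfP_i$ composed with the renewal time change. You add details the paper leaves implicit: the composition-continuity step, the identification $\mu_{\sfP}^{-1}\cov(\xi_1)=\Sigma_{d+2}$, and the overshoot bound $n^{-1/2}\sup_{t\le K}|nt-S^{\sfP}_{\sfN(nt)}|\le n^{-1/2}\max_{i\le \lfloor nK\rfloor+1}\sfP_i=O_{\pr}(n^{-1/2}\log n)$.
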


In the Brownian regime (Theorem~\ref{thm:fclt-brownian}), the functional limit is initially established on the renewal skeleton without endpoint residuals. To transfer this limit to the deterministic-target processes $(T(0,\lfloor nt \rfloor), H(0,\lfloor nt \rfloor))$, we require an additional condition: the maximum fluctuation within a single regeneration block must be negligible on the $\sqrt{n}$-scale. 

Under the finite-variance setting, the endpoint residuals remain tight, ensuring that this maximum fluctuation is negligible on the $\sqrt{n}$-scale. However, in the heavy-tail regime, the endpoint residuals may lack a finite second moment, which can disrupt this scaling. Provided this uniform control holds, the functional Slutsky's theorem applies. That is, for every fixed $M<\infty$,
\[
    \frac1{\sqrt n}
    \sup_{0\le m\le \lfloor Mn\rfloor}
    \left|
        T(0,m)-S_{\sfN(m)}^{\sfT}
    \right|
    \stackrel{\pr}{\to}0,
    \quad 
     \frac1{\sqrt n}
    \sup_{0\le m\le \lfloor Mn\rfloor}
    \left|
        H(0,m)-S_{\sfN(m)}^{\sfH}
    \right|
    \stackrel{\pr}{\to}0,
\]
and
\[
    \frac1{\sqrt n}
    \sup_{0\le m\le \lfloor Mn\rfloor}
    \norm{
        \mvJ_m-\mvS_{\sfN(m)}^{\sfJ}
    }
    \stackrel{\pr}{\to}0. 
\]
Consequently, the renewal skeleton $(S_{\sfN(nt)}^{\sfT}, S_{\sfN(nt)}^{\sfH}, S_{\sfN(nt)}^{\sfJ})$ can be replaced by the deterministic-target processes $(T(0,\lfloor nt \rfloor), H(0,\lfloor nt \rfloor), \mvJ(0,\lfloor nt \rfloor)$. This uniform control trivially holds under Condition~\textnormal{B.\ref{ass:B2}}.

The dynamic behavior changes drastically in the heavy-tail regime, where the variance of a single block diverges. 

\begin{thm}[Stable L\'evy functional limit in the heavy-tail regime, see~{\cite[Theorem 7.4.2.]{whitt02}}]\label{thm:fclt-stable}
Assume Assumption~\textnormal{A.\ref{ass:A2}} holds with $0 < \gc < 2/\lc$. By Lemma~\ref{lem:T1-tail}, $\sfT_1$ is regularly varying with tail index $\alpha := \gc\lc \in (0,2)$. Let $m_n := \lfloor n/\mu_{\sfP} \rfloor$, and choose the scaling sequence $a_n$ and the truncated centering function $b_n(t)$ as
\[
    a_n := \inf\left\{x>0 : m_n\pr(\sfT_1>x)\le 1\right\}, \qquad
    b_n(t) := \lfloor nt/\mu_{\sfP}\rfloor \E\left[\sfT_1\ind_{\{\sfT_1\le a_n\}}\right].
\]
Then, in the Skorokhod space $D([0,\infty),\dR)$ equipped with the $J_1$ topology, the passage-time skeleton process converges to a totally right-skewed $\alpha$-stable L\'evy process:
\[
    \frac1{a_n} \left(
    S_{\sfN(nt)}^{\sfT}-b_n(t)
    \right)_{t\ge 0}
    \Longrightarrow
    \left(\cL_\alpha(t)\right)_{t\ge0},
\]
where $\cL_\alpha$ corresponds to the L\'evy measure $\Pi(dx) = \alpha x^{-\alpha-1}\ind_{\{x>0\}}\,dx$.
\end{thm}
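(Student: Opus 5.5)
We reduce the statement to the classical functional stable limit theorem for i.i.d.\ regularly varying summands, combined with a random time change by the renewal counting process $\sfN(nt)$. By Lemma~\ref{lem:B1-pivot-env}, the pairs $\{(\sfP_i,\sfT_i)\}_{i\ge1}$ are i.i.d., and $\sfP_1$ has exponential tails. By Lemma~\ref{lem:T1-tail}, $\pr(\sfT_1>x)\sim\mu_{\sfP}\bar F(x)^{\lc}$ is regularly varying with index $-\alpha$, where $\alpha=\gc\lc\in(0,2)$, and $\sfT_1\ge0$. The plan has three steps.

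\textbf{Step 1: deterministic index.} Apply the functional stable limit theorem (\cite[Theorem 7.4.2]{whitt02}, or Resnick's point-process argument) to the partial sums $S^{\sfT}_{\lfloor ms\rfloor}$, with $m=m_n$. With $a_n$ chosen so that $m_n\pr(\sfT_1>a_n)\to1$ and truncated centering $\lfloor m_n s\rfloor\E[\sfT_1\ind_{\{\sfT_1\le a_n\}}]$, this gives
\[
a_n^{-1}\bigl(S^{\sfT}_{\lfloor m_n s\rfloor}-\lfloor m_n s\rfloor\E[\sfT_1\ind_{\{\sfT_1\le a_n\}}]\bigr)_{s\ge0}\Longrightarrow(\cL_\alpha(s))_{s\ge0}
\]
in $(D,J_1)$. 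Since $\sfT_1\ge0$, only the positive-jump Lévy measure $\alpha x^{-\alpha-1}dx$ appears. Writing $s=t\mu_{\sfP}\cdot(n/\mu_{\sfP})/m_n$ turns this into a statement indexed by $\lfloor nt/\mu_{\sfP}\rfloor$, and the ratio $\lfloor nt/\mu_\sfP\rfloor/m_n\to t$ uniformly on compacts.

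\textbf{Step 2: random time change.} Replace the index $\lfloor nt/\mu_{\sfP}\rfloor$ by $\sfN(nt)$. Since $\E\sfP_1^2<\infty$, the functional renewal CLT gives
\[
\sup_{t\le M}|\sfN(nt)-nt/\mu_{\sfP}|=O_{\pr}(\sqrt n).
\]
Using the same windowing argument as in the proof of Theorem~\ref{thm:micro-stable}, fix $r_n=\lceil n^\beta\rceil$ with $\tfrac12<\beta<1\wedge\tfrac1\alpha$. On the high-probability event that the index error is at most $r_n$ uniformly over $t\le M$, the discrepancy between the two processes is bounded by the maximum over the $O(n)$ windows of length $2r_n$ of the window sums. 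This uniform maximum is the main obstacle, because the bound from Theorem~\ref{thm:micro-stable} handles a single window, not a supremum. Two routes are available:
\begin{enumerate}[(a)]
\item Control the maximum directly. The truncated part contributes at most $r_n\E[\sfT_1\ind_{\{\sfT_1\le\gd a_n\}}]=o(a_n)$ deterministically, by \eqref{eq:stable-trunc-vanishing}. For the jumps, the large jumps exceeding $\gd a_n$ among the first $Mm_n$ blocks are $O_{\pr}(1)$ in number, and they are shifted in time by only $O(r_n/n)\to0$. Such a shift is a $J_1$-small time perturbation rather than a sup-norm error.
\item Use continuity of composition. Observe that $\sfN(n\cdot)/m_n\to\mathrm{id}$ uniformly on compacts in probability. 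The limit $\cL_\alpha$ is in $D$ and the time change is continuous and strictly increasing, so the composition map is continuous at (limit, identity) in $J_1$ (\cite[Theorem 13.2.2]{whitt02}).
\end{enumerate}
We prefer route (b), since it sidesteps the windowing estimate.

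\textbf{Step 3: reconcile centering.} After the time change, the centering becomes $\sfN(nt)\E[\sfT_1\ind_{\{\sfT_1\le a_n\}}]$. It remains to show that replacing it by $b_n(t)$ costs $o_{\pr}(a_n)$ uniformly on compacts, i.e.
\[
O_{\pr}(\sqrt n)\,\E[\sfT_1\ind_{\{\sfT_1\le a_n\}}]=o(a_n).
\]
We handle this by the same Karamata case split as in \eqref{eq:stable-trunc-vanishing}. For $\alpha<1$ the truncated mean is $\asymp a_n/n$. For $\alpha=1$ it is slowly varying while $\sqrt n/a_n\to0$ polynomially. For $1<\alpha<2$ it is bounded and $\sqrt n=o(a_n)$ because $1/\alpha>1/2$.

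Combining Steps 1--3 with the functional Slutsky lemma yields the claimed convergence.
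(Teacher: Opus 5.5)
Your proposal is correct and follows essentially the paper's route. The paper gives no separate proof: it combines the i.i.d.\ block structure (Lemma~\ref{lem:B1-pivot-env}) and the one-block tail (Lemma~\ref{lem:T1-tail}) with Whitt's renewal--reward functional limit theorem, and that theorem rests on the same continuous-mapping argument as your route (b), namely the deterministic-index functional stable limit followed by continuity of composition at the identity time change. Your Step~3 makes explicit a check the paper leaves implicit: the uniform $O(\sqrt n)$ renewal-count fluctuations, multiplied by the truncated mean, are $o(a_n)$, so the random centering can be replaced by $b_n(t)$.
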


%\begin{rem}[Interpretation of the L\'evy jumps]
The Brownian/stable dichotomy has a simple geometric interpretation.  In the Brownian regime, the large-scale fluctuation is accumulated from many small, comparable block fluctuations.  In the stable regime, by contrast, the renewal-skeleton fluctuation is dominated by rare \emph{bottleneck blocks} in which an interior unit cut has all $\lc$ crossing edges simultaneously taking unusually large weights.  On the renewal skeleton, these rare bottleneck blocks become the jumps of the limiting stable L\'evy process.

\begin{rem}[Deterministic targets and topological subtleties]
Replacing the skeleton process by the deterministic-target process $T(0,\lfloor nt \rfloor)$ is straightforward in the Brownian regime once the maximal within-block residual is negligible on the $\sqrt n$ scale.  The same transfer is not automatic in the stable regime.  A single block can produce an extreme delay of order $a_n$, and a deterministic target may fall inside such an extreme block rather than at a renewal boundary.  We therefore formulate the stable functional limit on the renewal skeleton and leave the exact deterministic-target functional limit as an open problem.
\end{rem}

\subsection{Open problems}\label{ssec:open-problems}

The exact identification of the pivot blocks and of the block-tail exponent $\lc$ in the microscopic regime suggests several further directions.

\begin{enumeraten}
\item \textbf{Mesoscopic and macroscopic regimes.}
Our microscopic analysis relies on the positive-probability occurrence of pivot nodes.  When $\ell=\ell_n$ grows with $n$, the local pivot probability $p_{a,\ell}$ typically decays to zero, and the exact regeneration structure used here no longer gives a useful decomposition at linear density. It is therefore natural to ask how the microscopic Gaussian or stable limits cross over to the mesoscopic and macroscopic behavior as $\ell$ increases. In these growing-range regimes, block decomposition, triangular-array central limit theorems, exploration processes, and branching-process approximations appear to be the appropriate tools. We refer to the companion paper~\cite{DK26b} for results in this direction.

\item \textbf{Deterministic-target stable L\'evy limits.}
The stable functional limit stated above is formulated for the renewal skeleton
$
    \{S_{\sfN_{nt}}^{\sfT}:t\ge0\}.
$
It is natural to ask whether the deterministic-target process
$
    \{T(0,\lfloor nt\rfloor):t\ge0\}
$
has the same stable L\'evy limit in the Skorokhod $J_1$ topology after the corresponding centering and $a_n$-normalization.  This is not a formal consequence of the skeleton result.

\item \textbf{Non-additive geometric functionals and higher dimensions.}
While Corollary~\ref{cor:geo-stat} controls bounded additive observables along the geodesic, the behavior of genuinely non-additive or higher-complexity geometric functionals remains open. Examples include the maximal backtracking depth, the detailed shape of large regeneration blocks, and empirical processes indexed by growing classes of local patterns. Analogous wandering questions in higher-dimensional spread-out graphs or finite-width cylinders also pose significant challenges.

\item \textbf{Directed models, and other directions.}
Several natural directions remain open. One may ask how the microscopic picture changes for signed edge weights under assumptions that preserve well-posed passage times, or for spatially inhomogeneous models in which the edge law is modulated by a dispersal kernel of the form $r(d(x,y)/\ell)$.

For signed weights, the class of admissible paths and the well-posedness assumptions must be specified explicitly. The local pivot construction argument continues to hold under an analogue of Condition~B.\ref{ass:B1}.
Nevertheless, the exact regenerative localization does not follow automatically, since a negative-weight excursion outside a renewal block may reduce the passage cost. Identifying natural signed-weight hypotheses under which the full regeneration theory survives is therefore a separate problem. 

Another direction is to analyze the model for higher-dimensional spread-out lattice graphs, where even for nearest-neighbor FPP only limited fluctuation results are known, and to study directed spread-out FPP, where anisotropy may produce genuinely different fluctuation mechanisms in higher dimensions. 
\end{enumeraten}

\noindent\textbf{Acknowledgements.}
The authors thank Greg Terlov for many helpful conversations during the early stages of this project. They are also grateful to Tianyi Huang, Hansen Liu, and Yongzheng Yang for their contributions regarding simulations for the project, which was recognized as a runner-up for the Illinois Mathematics Lab Research Award.

\bibliographystyle{alphaurl}
\bibliography{fppso.bib} 
\end{document}